\theoremstyle{plain}
\newtheorem{lemma}{Lemma}
\newtheorem{theorem}[lemma]{Theorem}
\newtheorem{cor}[lemma]{Corollary}
\newtheorem{prop}[lemma]{Proposition}
\numberwithin{lemma}{section}
\numberwithin{equation}{section}
\theoremstyle{definition}
\newtheorem{hyp}{Hypothesis}
\theoremstyle{remark}
\newtheorem{remark}{Remark}[section]
\newcommand{\op}{{\rm op}}
\newcommand{\dd}{{\rm d}}
\def\Ddots{\mathinner{\mkern1mu\raise\p@
\vbox{\kern7\p@\hbox{.}}\mkern2mu
\raise4\p@\hbox{.}\mkern2mu\raise7\p@\hbox{.}\mkern1mu}}
\begin{document}
\author{Edgar Assing}
\author{Valentin Blomer}
  
\address{Mathematisches Institut, Endenicher Allee 60, 53115 Bonn}
\email{blomer@math.uni-bonn.de}
\email{assing@math.uni-bonn.de}

\title{The density conjecture for principal congruence subgroups}


\thanks{This research   was supported in part by the DFG-SNF Lead Agency Program grant BL 915/2-2 and Germany's Excellence Strategy grant EXC-2047/1 - 390685813. The first author was partially supported by ERC Advanced Grant 101054336.}

\begin{abstract}   We prove the spherical density conjecture of Sarnak for the principal   subgroup of ${\rm SL}_n( \Bbb{Z})$  of squarefree level and discuss various arithmetic applications. The ingredients include new bounds for local Whittaker functions and Kloosterman sums. 
\end{abstract}

\subjclass[2010]{Primary:  11F72, 11L05}
\keywords{Exceptional eigenvalues, density hypothesis, Kloosterman sums, Whittaker functions, local representations, Kuznetsov formula}

\setcounter{tocdepth}{2}  \maketitle 

\maketitle

\section{Introduction}

\subsection{The density hypothesis}

Perhaps the most fundamental open conjecture in the theory of automorphic forms, in some aspects comparable to the Riemann hypothesis in the theory of $L$-functions,   is the generalized Ramanujan conjecture: unitary cuspidal automorphic representations for the group ${\rm GL}(n)$ are tempered. Even for $n=2$ it appears  completely out of reach. See \cite{Sa4, BB} for surveys.  Many applications, however, do not need the full strength of the Ramanujan conjecture, it suffices to quantify the possible exceptions and show that ``strong'' violations to the Ramanujan conjecture appear ``rarely'' (in a sense to be made precise below).  Such a quantitative approximation towards the Ramanujan conjecture is called a \emph{density theorem}. This is a familiar concept from the theory of $L$-functions, the most famous application  being the Bombieri--Vinogradov theorem. It can be proved by a density theorem for Dirichlet $L$-functions (see \cite[Section 10]{IK}), and its arithmetic content is (a precise version of) the statement that primes are equidistributed in residue  classes to large moduli; see \cite[Section 17]{IK}.   The Bombieri--Vinogradov theorem is one of the cornerstones in analytic number theory and a good substitute for the Riemann hypothesis in many cases. 

We   turn to the automorphic set-up. 
Let $\mathcal{F}$ be a finite family of 
  automorphic forms contributing to the spectral decomposition of $L^2(\Gamma \backslash {\rm SL}_n(\Bbb{R})/{\rm SO}_n(\Bbb{R}))$ for some congruence lattice $\Gamma \subset {\rm SL}_n(\Bbb{R})$. 
For $\varpi\in\mathcal{F}$  and 
 an unramified place $v$ of $\Bbb{Q}$   we have a set of Langlands parameters $\{\mu_{\varpi,v}(j) \mid 1 \leq j \leq n\}$. We normalize these such that $\varpi$ is tempered at $v$ when all $\mu_{\varpi,v}(j)$ are purely imaginary and write  $\sigma_{\varpi,v} =  \max_j |\Re \mu_{\varpi,v}(j)|$.  For $\sigma \geq 0$ we write 
\begin{equation}\label{def11}
  N_v(\sigma, \mathcal{F}) = \#\{ \varpi \in \mathcal{F} \mid  \sigma_{\varpi,v} \geq \sigma\}
  \end{equation}
 for the number of automorphic forms in $\mathcal{F}$ \emph{violating the Ramanujan conjecture  at $v$ by $\sigma$}. The trivial bound is $N_v(\sigma, \mathcal{F}) \leq N_v(0, \mathcal{F}) = \#\mathcal{F}$, and if $\mathcal{F}$ contains the trivial representation, then $N_v(\frac{1}{2}(n-1), \mathcal{F})= 1$. 

In his 1990 ICM address \cite{Sa1}, see also \cite{SX}, Sarnak  considered this scenario  in the context of groups $G$ of real rank $1$, the  principal congruence subgroup 
and $v = \infty$, and coined the phrase  density hypothesis for a  linear interpolation between these two trivial bounds for the tempered spectrum and the trivial representation.
 See  \cite{Sa2, PSa} for interesting recent applications  of this hypothesis in the context of Golden Gates and quantum computing.  

For the  rest of this paper, we consider the 
 (finite) family $\mathcal{F} = \mathcal{F}_{\Gamma(q)}(M)$ of cusp forms $\varpi$ for the principal congruence subgroup    
 \begin{displaymath}
 \begin{split}
   \Gamma(q)& = \text{ker}({\rm SL}_n(\Bbb{Z}) \rightarrow {\rm SL}_n(\Bbb{Z}/q\Bbb{Z})) \\
   &=   \left(\begin{matrix} 1 + q\Bbb{Z}& q\Bbb{Z}&  & \cdots & q \Bbb{Z}\\ q\Bbb{Z} & 1 + q\Bbb{Z} &  & \cdots& q\Bbb{Z}\\\vdots  &  & \ddots& &\vdots\\ 
	q\Bbb{Z} &  \cdots && 1 + q\Bbb{Z} &q\Bbb{Z} \\
	q \Bbb{Z} &     \cdots && q\Bbb{Z} & 1 + q\Bbb{Z}
	\end{matrix}\right) \cap {\rm SL}_n(\Bbb{Q})
 \end{split}
 \end{displaymath}
  with bounded spectral parameter $\| \mu_{\varpi,\infty}\| \leq M$. In this case, the spherical density conjecture reads 
\begin{equation*}
N_v(\sigma, \mathcal{F}) \ll_{v, \varepsilon, M, n}[{\rm SL}_n(\Bbb{Z}) : \Gamma(q)]^{1 - \frac{2\sigma}{n-1} + \varepsilon}
\end{equation*}
where we think of $O_M([{\rm SL}_n(\Bbb{Z}) : \Gamma(q)])$ as the cardinality of $\mathcal{F}$ (by an appropriate version of Weyl's law). For applications it is desirable to have polynomial dependence in $M$. For $n=2$, this conjecture and archimedean variations thereof are well understood and proved in many typical situations and in different ways; see e.g.\ \cite{Hum, Hux, Iw, Sa7, FHMM}.  

\medskip 

For $n > 2$ the problem is completely open, even in special cases. Only very recently, the second author \cite{Bl} established (a stronger version of) the density conjecture for the Hecke congruence subgroup $\Gamma_0(p) \subseteq {\rm SL}_n(\Bbb{Z})$ for a large prime $p$,  using the Kuznetsov formula and a careful analysis of Kloosterman sums. An archimedean analogue was subsequently shown in \cite{Ja}.  Other than that we have no results for $n > 3$, neither for archimedean families, nor in the level aspect for any type of congruence subgroup. The authors of \cite{GK} argue ``\emph{that it seems that the problem is harder for principal congruence subgroups, and easier for group[s] that are far from being normal.}''  While it is too early to formalize this and also depends to some extent on the viewpoint, the principal congruence subgroup is certainly a very natural scenario and also interesting for applications. In this paper we solve the problem for the principal congruence subgroup of squarefree level \emph{in arbitrary dimension}. 

\begin{theorem}\label{thm1} Let $M, \varepsilon > 0$. Let $q \in \Bbb{N}$ be squarefree and let $\mathcal{F}=\mathcal{F}_{\Gamma(q)}(M)$ be the set of cuspidal automorphic forms for $\Gamma(q) \subseteq {\rm SL}_n(\Bbb{Z})$ with archimedean spectral parameter $\| \mu\| \leq M$. Fix a place $v$ of $\Bbb{Q}$, and if $v = p$ is finite, assume that $p\nmid q$. 
There exists a constant $K$ depending only on $n$, such that 
$$\mathcal{N}_v(\sigma, \mathcal{F}) \ll_{v, \varepsilon, n} M^K  [{\rm SL}_n(\Bbb{Z}) : \Gamma(q)]^{1 - \frac{2\sigma}{n-1} + \varepsilon}$$
for $\sigma \geq 0$. 
\end{theorem}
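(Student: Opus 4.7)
The plan is to prove Theorem~\ref{thm1} through the ${\rm GL}_n$ Kuznetsov trace formula combined with amplification, extending the strategy of \cite{Bl} from the Hecke congruence subgroup $\Gamma_0(q)$ to the principal congruence subgroup $\Gamma(q)$. First I would pass to the adelic setup described in Section~\ref{sec:adelising}: the family $\mathcal{F}_{\Gamma(q)}(M)$ is identified with a set of cuspidal automorphic representations $\pi = \pi_\infty \otimes \bigotimes_p \pi_p$ of ${\rm PGL}_n(\Bbb{A})$ whose archimedean component has spectral parameter of norm $\leq M$ and whose local components at $p \mid q$ possess nonzero vectors fixed by the $p$-component of $\Gamma(q)$. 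At primes $p \mid q$ these local representations are in general ramified principal series and one must work with an explicit basis of such fixed vectors, indexed essentially by characters of the finite quotient appearing in the Iwahori-type decomposition.

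With this basis in place I would apply a Kuznetsov formula for $\Gamma(q)$, producing a spectral side that is a weighted sum over $\mathcal{F}$ of squares of Whittaker coefficients, and a geometric side consisting of Kloosterman sums attached to the Weyl elements of ${\rm GL}_n$. To isolate forms with $\sigma_{\varpi,v}\geq \sigma$ I would insert an amplifier. At a finite place $v=p\nmid q$ this is a short polynomial in the Hecke operators $T_{p^\ell}$ whose spectral weight on $\pi$ is bounded below by a quantity of size $p^{2\sigma \ell}$; at $v=\infty$ the amplifier is instead an archimedean test function whose Whittaker transform is nonnegative and large on the non-tempered locus at distance $\sigma$ from the unitary axis. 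By positivity, $\mathcal{N}(\sigma,\mathcal{F})$ times the squared amplifier is bounded by the full spectral sum, which equals the geometric side. Balancing the amplifier length $X$ against the two contributions on the geometric side, namely the diagonal term of size $\#\mathcal{F}\cdot X^{-1}$ (up to local volumes) and the off-diagonal Kloosterman term, should yield the bound \eqref{nv} with $\#\mathcal{F} \ll_M [{\rm SL}_n(\Bbb{Z}):\Gamma(q)]^{1+\varepsilon}$.

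The hard part will be controlling the geometric side, which requires the two new inputs announced in the abstract. The first is a Weil-type bound with square-root cancellation for the ${\rm GL}_n$ Kloosterman sums attached to $\Gamma(q)$: these differ from the familiar Kloosterman sums for $\Gamma_0(q)$ because the principal congruence structure forces the moduli, compatibility conditions and associated twists to factor in a more delicate fashion, and the bound must be uniform in $q$, in the Weyl element, and polynomial in the arguments. I would approach this by $p$-adic stationary phase together with an exponent pair analysis, reducing to prime moduli via the squarefree hypothesis and then handling each prime via explicit evaluation. The second input is uniform bounds for local Whittaker functions of ramified principal series of ${\rm GL}_n(\Bbb{Q}_p)$ restricted to $K_p(q)$-fixed vectors; these enter the Kuznetsov weight on both sides and must be controlled uniformly in the representation, the inducing character and the level, which I would attack through Jacquet's integral and an inductive analysis on the mirabolic subgroup. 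Granting these two ingredients, assembling them into \eqref{nv} is an amplification calculation parallel to \cite{Bl}, with some additional care needed to extract polynomial $M$-dependence from the archimedean test function.
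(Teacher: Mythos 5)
Your high-level framework (Kuznetsov formula, a spectral weight detecting large $\sigma_{\varpi,v}$, bounds for Kloosterman sums and for ramified local data) is the right skeleton, but several of the specific ingredients you propose either do not match what is needed or would be considerably harder to prove than what the paper actually uses, and one key structural observation is missing.

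The most important discrepancy concerns the Kloosterman sums. You propose a Weil-type square-root cancellation bound proved by $p$-adic stationary phase. The paper does \emph{not} prove or use square-root cancellation; the Kloosterman sums attached to $w_\ast$ are estimated \emph{trivially}, by the number of terms in the Kloosterman set (Lemma~\ref{technical} and Theorem~\ref{thm32}). What makes this suffice is a structural observation you omit entirely: by Lemma~\ref{lem41}, for $\Gamma(q)^{\natural}$ the admissible moduli of the Kloosterman sum attached to any Weyl element other than the identity and $w_\ast$ are divisible by $q^{n+1}$, while those for $w_\ast$ are divisible by $q^n$. Once the test function imposes $c_j \ll T \ll q^{n+1}$, all Weyl elements except $\mathrm{id}$ and $w_\ast$ drop out, and for $w_\ast$ the moduli lie in the very short range $q^n \mid c_j \ll q^{n+1}$, so that the remaining sum over moduli has only $\ll q$ terms per coordinate. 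The density exponent then comes out of carefully \emph{counting} the Kloosterman set via congruence analysis, not from oscillation. Closely related to this, the paper works throughout with the conjugated group $\Gamma(q)^{\natural} = D_q^{-1}\Gamma(q)D_q$, which contains $U(\Bbb{Z})$ and on which the standard unipotent character is trivial; without this conjugation the Fourier coefficients have denominators and the Kuznetsov apparatus of \cite{Fr, Bl} does not carry over cleanly. Your proposal does not mention this at all.

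The second discrepancy is the role and proof of the local Whittaker input. You describe bounds for local Whittaker functions of ramified principal series via Jacquet's integral and mirabolic induction, entering "the Kuznetsov weight on both sides." In the paper the Whittaker ingredient (Theorem~\ref{thm15}) is a \emph{lower} bound for $\sum_{\varpi\in\mathrm{ONB}(V_\pi)}|A_\varpi(1)|^2/\dim V_\pi$, used once, to drop the weight $|A_\varpi(1)|^2$ that the Kuznetsov formula forces on the spectral side. Its proof is through \eqref{eq:single_Fourier_coeff}, reducing to the local stable integrals $\mathcal{S}_{\pi_p,p}(D_q)$, and then runs through the full Bernstein–Zelevinsky classification: depth-zero supercuspidals via cuspidal representations of ${\rm GL}_n(\Bbb{Z}/p\Bbb{Z})$ and Gelfand–Graev multiplicity one, Steinberg twists, and parabolic induction. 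It is not restricted to ramified principal series. Finally, your framing in terms of a classical amplifier of length $X$ to be balanced is not how the detection works here: the exponent $T^{2\sigma_{\varpi,v}}$ is built in directly via \eqref{finite} at a finite place, or via Lemma~\ref{whit} at $v=\infty$, and the constraint $T \ll M^{-K}q^{n+1}$ is the single bottleneck; there is no free amplifier length to optimize.
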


We will extend Theorem \ref{thm1} to the entire spectrum (not only the cuspidal one) in Theorem \ref{eisen} below. However, our method begins naturally with the cuspidal spectrum and in this way provides, as in \cite{Bl},  the option to improve the exponent \emph{beyond} the interpolation between the tempered and spectrum and the trivial representation (which is not cuspidal). See \cite[Proposition 4]{BM} for a special case in this direction. 
This is the beauty of the Kuznetsov formula, since the spectral side involves no residual spectrum. By the Luo-Rudnick-Sarnak  bounds \cite{LRS}, Theorem \ref{thm1} by itself is nevertheless non-trivial for  $\sigma < 1/2 - 1/(n^2 + 1)$.

We rephrase Theorem \ref{thm1} in a representation theoretical way. A spherical (i.e.\ unramified) representation $\pi_v$ of ${\rm GL}_n(\Bbb{Q}_v)$ is determined by a set of Langlands parameters $\{\mu_{\pi_v}(j) \mid 1 \leq j \leq n\}$. Again we normalize so that $\pi_v$ is tempered when all $\mu_{\pi_v}(j)$ are purely imaginary. We say that $\pi_v$ appears in $\mathcal{F}$ when there is $\varpi\in \mathcal{F}$ with $\mu_{\varpi,v} = \mu_{\pi_v}$ and write $m_{\mathcal{F}}(\pi_v)$ for the multiplicity of $\pi_v$ in $\mathcal{F}$, i.e.\  $m_{\mathcal{F}}(\pi_v) = \# \{ \varpi\in\mathcal{F}\colon \mu_{\varpi,v} = \mu_{\pi_v}\}$ (see also \eqref{dim-m} below).  If $\mathcal{F}=\mathcal{F}_{\Gamma(q)}(M)$, it is suggestive to write $m_{\mathcal{F}} = m_{\Gamma(q)}$. 

An immediate corollary of Theorem~\ref{thm1} is the following upper bound on these multiplicities. This proves  Conjecture 1 of Sarnak-Xue \cite{SX}  when generalized to arbitrary rank and an arbitrary place~$v$. 

\begin{cor} Under the same assumptions as in Theorem \ref{thm1} and with the same notation we have for an irreducible unramified representation $\pi_v$ of ${\rm GL}_n(\Bbb{Q}_v)$ the bound 
$$m_{\Gamma(q)}(\pi_v) \ll_{v, n, \varepsilon} \| \mu_{\pi_{\infty}} \|^K  [{\rm SL}_n(\Bbb{Z}) : \Gamma(q)]^{1 - \frac{2\sigma_{\pi_v}}{n-1} + \varepsilon}$$
where $\| . \|$ denotes the max-norm and $\sigma_{\pi_v} = \| \Re \mu_{\pi_v} \| =\max_j|\Re \mu_{\pi_v}(j)|$. 
\end{cor}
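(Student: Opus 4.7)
This corollary is essentially a translation of Theorem~\ref{thm1} into the language of multiplicities, so the plan is to identify the forms counted by $m_{\Gamma(q)}(\pi_v)$ with a subset of those appearing in the counting function $N_v(\sigma,\mathcal{F})$, and then to invoke the main theorem with the correct parameters.

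First I would fix the family. Set $M = \|\mu_{\pi_\infty}\|$, where (if $v=\infty$) one has $\pi_\infty = \pi_v$, and (if $v$ is finite) $\pi_\infty$ denotes the prescribed archimedean component of the ambient global representation to which $\pi_v$ belongs. Take $\mathcal{F} = \mathcal{F}_{\Gamma(q)}(M)$; this is the smallest family of the form considered in Theorem~\ref{thm1} that is guaranteed to contain every cusp form with the prescribed local data at $v$. Next I would observe that for any $\varpi\in\mathcal{F}$ with $\mu_{\varpi,v} = \mu_{\pi_v}$ one has
$$\sigma_{\varpi,v} = \max_j |\Re\mu_{\varpi,v}(j)| = \max_j |\Re\mu_{\pi_v}(j)| = \sigma_{\pi_v},$$
so such a $\varpi$ is in particular enumerated by $N_v(\sigma_{\pi_v},\mathcal{F})$. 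This yields the trivial comparison
$$m_{\Gamma(q)}(\pi_v) \;\leq\; N_v(\sigma_{\pi_v},\mathcal{F}).$$

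Finally I would apply Theorem~\ref{thm1} with $\sigma = \sigma_{\pi_v}$ and with the above $M$, which converts the right-hand side into exactly the bound stated in the corollary. I expect no real obstacle: the argument is pure definition-chasing and all of the analytic content (spectral theory, Kuznetsov formula, Whittaker and Kloosterman estimates) is already packaged inside Theorem~\ref{thm1}. The only point requiring a moment of care is the passage from the ``$\geq\sigma$'' threshold defining $N_v$ to the ``$=\sigma$'' equality implicit in the definition of the multiplicity $m_{\Gamma(q)}(\pi_v)$, but since coincidence of Langlands parameters entry-by-entry forces coincidence of the real parts, the inclusion of one counting set into the other is automatic.
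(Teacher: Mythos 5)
Your proposal is correct and takes essentially the same route as the paper, which indeed presents the corollary as an immediate consequence of Theorem~\ref{thm1}: one simply observes that every $\varpi$ with $\mu_{\varpi,v}=\mu_{\pi_v}$ has $\sigma_{\varpi,v}=\sigma_{\pi_v}$, so $m_{\Gamma(q)}(\pi_v)\leq N_v(\sigma_{\pi_v},\mathcal{F})$, and applies Theorem~\ref{thm1} with $\sigma=\sigma_{\pi_v}$. Your handling of the $M$-dependence (taking $M=\|\mu_{\pi_\infty}\|$ when $v=\infty$, and the threshold $M$ implicit in the definition of the family when $v$ is finite) correctly reflects the slight imprecision in the paper's notation.
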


The restriction that $q$ be squarefree is used at two independent technical (but important) points in the argument: in the computation of Kloosterman sums in Lemma \ref{technical} and in the computation of local Whittaker functions for supercuspidal representations in Section \ref{sec6}. 
It might be possible to remove this assumption with 
more work and somewhat different arguments, cf.\ the remarks at the end of Subsection \ref{12} and after Lemma \ref{technical}. 
 
We   prove several different versions of Theorem \ref{thm1} that we now discuss. 

The condition $\sigma_{\varpi,v}(v) \geq \sigma$ in the definition of $\mathcal{N}_v(\sigma, \mathcal{F})$ will be detected by a weight $T^{2\sigma_{\varpi,v}}$ for a parameter $T > 1$ in the trace formula. For several applications (including the ones in this paper) it is easier to work directly with this weight rather than the quantity $\mathcal{N}_v(\sigma, \mathcal{F})$; see Corollary \ref{cor-kuz} below. It is also possible to obtain a version of Theorem~\ref{thm1} on the level of cuspidal automorphic representations,  rather than cuspidal automorphic forms.  This is discussed in Remark~\ref{rem:aut_rep_pers} below.
 
Also for applications it is useful to include non-cuspidal automorphic representations into the bound to be able to handle a full spectral decomposition. By Langlands' classification of Eisenstein series, this is essentially an inductive process based on Theorem \ref{thm1}. This more general bound will be stated and proved in Section \ref{sec-eis} below, and we refer to Theorem \ref{eisen}. 

The prototype of our density result comes from an application of the Kuznetsov formula and therefore  involves unipotent periods (Fourier coefficients) as weights; see Proposition \ref{density-kuz}. This might be useful for some applications where these weights come up naturally. 
Removing them to obtain the classical unweighted density conjecture is a non-trivial task in the present situation. This would be simple for the group $\Gamma_0(q)$ by newform theory, but in absence of multiplicity one theorems and newform theory for general subgroups like $\Gamma(q)$ one has to obtain good lower bounds/asymptotics for local Whittaker functions on average over an orthonormal basis in each given representation. This is to some extent a problem of local representation theory and should have applications elsewhere. We discuss this in more detail below; see Theorem \ref{thm15} in the next subsection.  

Finally we mention that there is a slightly different way to measure the failure of being tempered which avoids local Langlands parameters; cf.\ \cite{SX}. For a local representation $\pi$ let $p(\pi)$ be the infimum over $p \geq 2$ such that $K$-finite matrix coefficients of $\pi$ are in $L^p(Z(\Bbb{Q}_v)\backslash{\rm GL}_n(\Bbb{Q}_v))$. Then $\pi$ is tempered if $p(\pi) = 2$, and instead of $\sigma_{\varpi, v}$ one can use a function containing $p(\pi)$, e.g.\  $1 - 2/p(\pi)$ in the definition \eqref{def11} of $\mathcal{N}_v$. One can express $p(\pi)$ as a function of $\Re \mu_i$; see \cite[Lemma 3.2]{GGN}. In principle our method could also handle this alternative formulation of the density conjecture by using more general weight parameters in \eqref{weight}.

\subsection{The methods}\label{12} All approaches to automorphic density conjectures use some sort of trace formula which translates the spectral problem into an arithmetic problem. If we were to apply the pretrace formula to prove Theorem \ref{thm1}, we would essentially arrive at the counting problem described in Theorem \ref{thm2} below. This is the route that Sarnak-Xue \cite{SX} used for $n=2$. However, in higher rank we do not know how to solve this problem independently. Instead, 
we use the Kuznetsov formula   to derive Theorem \ref{thm1}; which also leads to a counting problem, but a rather different one. The general framework of the Kuznetsov formula (see Section \ref{sec5}) was already developed in \cite{Bl}, but a trace formula is only as powerful as its input on the geometric and/or the spectral side. Here we need to develop new tools on both sides, and our application requires \emph{sharp bounds} on both sides. 

The arithmetic work consists  in analyzing Kloosterman sums for the group $\Gamma(q)$ in higher rank. The investigation of such Kloosterman sums is, to the best of the authors' knowledge, new. It turns out that the critical Weyl element is the element $w_{\ast}$ as defined in \eqref{wast} below, 
and here we obtain sharp upper bounds for the corresponding Kloosterman sums. The key result is Theorem \ref{thm32} below; see also Subsection \ref{sec14}. 

Being a relative trace formula with respect to two copies of unipotent upper triangular matrices, the Kuznetsov formula contains Fourier coefficients on the spectral side. It is most convenient to work with the conjugated group 
$$\Gamma(q)^{\natural} = \text{diag}(q^{n-1}, \ldots, q, 1) ^{-1} \Gamma(q) \text{diag}(q^{n-1}, \ldots, q, 1), $$
the advantage being that the standard character on the unipotent group $U$ is trivial on $\Gamma(q)^{\natural}$ and $\Gamma(q)^{\natural}$ contains $U(\Bbb{Z})$. Therefore the Fourier expansion with respect to $\Gamma(q)^{\natural}$ is similar to the one for $\Gamma_0(q)$ and in contrast to $\Gamma(q)$ has no ``denominators''. 
For a member $\varpi$ of an orthonormal basis of $L^2(\Gamma(q)^{\natural}\backslash {\rm SL}_n(\Bbb{R})/{\rm SO}_n(\Bbb{R}))$  with respect to the inner product as defined in  \eqref{standinner} below
and $y = \text{diag}(y_{n-1} \cdots  y_1, \ldots, y_2y_1, y_1, 1)$, we denote by
$$ \mathcal{W}_{\varpi}(y) =  \int_{U(\Bbb{Z}) \backslash U(\Bbb{R})} \varpi(xy) \theta^{-1}(x) \dd x$$ 
its  ``first'' Fourier coefficient (with the standard notation to be recalled and introduced in the next section). For cuspidal $\varpi$ with spectral parameter $\mu=\mu_{\varpi,\infty}$ the Fourier coefficient  $\mathcal{W}_{\varpi}(y)$ is a multiple of the standard Whittaker function $W_{\mu}(y_1, \ldots, y_{n-1})$  as defined in \eqref{wmu} below, and we write
$$ \mathcal{W}_{\varpi}(y) = A_{\varpi}(1) W_{\mu}(y_1, \ldots, y_{n-1}).$$
It may well happen that $A_{\varpi}(1) = 0$. An important ingredient in the proof, which may have applications elsewhere, is the following essentially sharp lower bound of independent interest, which shows that in a certain average sense the first Fourier coefficient is at least as big as expected. 

\begin{theorem}\label{thm15} Let $q$ be squarefree. For an irreducible cuspidal automorphic representation $\pi$ on $X_q^{\Bbb{A}}$ (as defined in \eqref{227a} below) let $V_{\pi}\subseteq L^2(\Gamma(q)^{\natural}\backslash {\rm SL}_n(\Bbb{R})/{\rm SO}_n(\Bbb{R}))$ be the (finite dimensional) space of cusp forms whose adelic lifts generate $\pi$. For any $\varepsilon > 0$ we have $$\frac{1}{\dim V_{\pi}}\sum_{\varpi \in \text{\rm ONB}(V_{\pi})} |A_{\varpi}(1)|^2 \gg (\| \mu_{\pi_{\infty}} \| q)^{-\varepsilon} \frac{ q^{n(n-1)(n-2)/6}}{q^{n^2 - 1}} ,$$
where ${\rm ONB}$ refers to an orthonormal of $V_{\pi}$. 
\end{theorem}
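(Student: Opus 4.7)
My plan is to use uniqueness of local Whittaker models to reduce the averaged Whittaker period to a product of purely local lower bounds at the primes $p \mid q$.

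Lifting each $\varpi \in V_\pi$ to an adelic cusp form $\phi_\varpi$ on $G(\Bbb Q) \backslash G(\Bbb A)$, the first Fourier coefficient $A_\varpi(1) W_\mu(y_1, \ldots, y_{n-1})$ is realised as a global Whittaker period of $\phi_\varpi$; since $\pi = \otimes_v \pi_v$ is generic, this period factors through local Whittaker functionals $W_v$ on $\pi_v$. I would choose an orthonormal basis of $V_\pi$ consisting of pure tensors $\otimes_v \phi_{\varpi, v}$, with the archimedean vector $\mathrm{SO}_n(\Bbb R)$-fixed, the vector at each $p \nmid q$ equal to the normalised spherical vector of $\pi_p$, and the vectors at $p \mid q$ running through an $\text{ONB}$ of the finite-dimensional space $\pi_p^{K_p^\natural}$, where $K_p^\natural \subset \mathrm{GL}_n(\Bbb Z_p)$ denotes the image of the level-$p$ principal congruence subgroup under conjugation by $\text{diag}(p^{n-1}, \ldots, p, 1)$. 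Combined with an Ichino--Ikeda style factorisation of inner products through local ones, this yields
$$\frac{1}{\dim V_\pi} \sum_{\varpi \in \text{ONB}(V_\pi)} |A_\varpi(1)|^2 \;=\; \mathcal{C}(\pi) \prod_{p \mid q} \mathcal{L}_p(\pi_p),$$
where $\mathcal{C}(\pi) \gg (\|\mu_{\pi_\infty}\|q)^{-\varepsilon}$ collects the archimedean and unramified factors (controlled by standard Stade-type lower bounds for $W_\mu$ and trivial estimates on local $L$-factor ratios), and
$$\mathcal{L}_p(\pi_p) \;=\; \frac{1}{\dim \pi_p^{K_p^\natural}} \sum_{v \in \text{ONB}(\pi_p^{K_p^\natural})} \frac{|W_v(e)|^2}{\|v\|^2}.$$
The theorem is then reduced to the uniform local lower bound $\mathcal{L}_p(\pi_p) \gg_\varepsilon p^{n(n-1)(n-2)/6 - (n^2 - 1) - \varepsilon}$ for every generic irreducible $\pi_p$ with $\pi_p^{K_p^\natural} \ne 0$, since the exponent is additive over the distinct prime divisors of the squarefree $q$.

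I would prove this local bound by case analysis on $\pi_p$ via the Bernstein--Zelevinsky classification (tamely and fully ramified principal series, representations induced from non-Borel parabolics, and supercuspidals). In each case the strategy is to exhibit an explicit test vector $v^\star \in \pi_p^{K_p^\natural}$ whose Kirillov-model norm and Whittaker value $W_{v^\star}(e)$ can be computed or lower-bounded, together with an upper bound for $\dim \pi_p^{K_p^\natural}$; since
$$\mathcal{L}_p(\pi_p) \;\geq\; \frac{|W_{v^\star}(e)|^2}{\dim \pi_p^{K_p^\natural} \cdot \|v^\star\|^2},$$
a single sufficiently good vector will suffice. The exponent $n(n-1)(n-2)/6 = \sum_{i=1}^{n-1} i(i-1)/2$ arises naturally from how conjugation by $\text{diag}(p^{n-1}, \ldots, 1)$ rescales the unipotent measure defining the Whittaker integral, and is matched by the explicit normalisations at the distinguished vector.

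The main obstacle is the supercuspidal case. The subgroup $K_p^\natural$ is not a standard parahoric and does not align cleanly with the Bushnell--Kutzko types that classify supercuspidals; constructing a single $K_p^\natural$-fixed vector with simultaneously controlled norm and nonvanishing explicit Whittaker value therefore requires a delicate study of how the conjugating torus element interacts with the type structure. This is precisely the content of Section~\ref{sec6} of the paper, and is also the second point (beyond the Kloosterman computation of Lemma~\ref{technical}) at which the squarefree hypothesis on $q$ is used in an essential way, since it restricts the relevant local representations at each $p\mid q$ to depth-one types for which these explicit constructions remain tractable.
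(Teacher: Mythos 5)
Your global-to-local reduction is essentially the same as the paper's: both lift to adelic cusp forms, factor the Whittaker period through local Whittaker models, and use a Lapid--Mao/Rankin--Selberg inner-product factorisation (the paper cites \cite[Lemma~4.4]{LM} rather than Ichino--Ikeda, but the content is the same) to arrive at a product of purely local quantities at $p\mid q$, with a global factor $C(\pi)\gg(\|\mu_{\pi_\infty}\|q)^{-\varepsilon}$ controlled by Stade's formula and Li's adjoint $L$-value bound. Up to that point, same route.

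The local step is where you diverge, and this is where your proposal has a real gap. You propose to lower-bound the local average by the contribution of a single explicit test vector $v^\star$, which is valid only if every term $|W_v(e)|^2/\|v\|^2$ in the sum is nonnegative. That positivity holds for $\pi_p$ itself (unitary), but the paper's reduction for essentially square-integrable $\pi_p={\rm St}(\tau,d)$ passes to a \emph{non-unitary} induced representation $\xi$ of which $\pi_p$ is the unique generic subquotient, and there the individual stable integrals $I_{i,j}$ can change sign — precisely the issue flagged in the remark after Lemma~\ref{lm:reduction}. Your one-vector strategy gives you no control over $\mathcal{S}_{\pi_p}$ in that case; the paper instead proves the \emph{equality} $\mathcal{S}_{\xi}=\mathcal{S}_{\sigma}$ (Lemma~\ref{lm:reduction}) by a double-coset analysis that isolates exactly the contributing $\gamma_i$'s, and this equality (not an inequality) is essential. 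Working directly in $\pi_p$ to produce a Whittaker-generic $C_p(p)^{\natural}$-fixed vector with computable norm for a special representation is exactly what the reduction is designed to avoid.

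On the supercuspidal case: you correctly flag it as the bottleneck, but the mechanism you gesture at (Bushnell--Kutzko types, their misalignment with $K_p^\natural$) is not what the paper uses and is more than is needed for squarefree $q$. Since $q$ is squarefree, only $m=1$ arises and hence only \emph{depth-zero} supercuspidals; these are constructed from cuspidal representations $(\mu,W)$ of ${\rm GL}_n(\Bbb{Z}/p\Bbb{Z})$ by compact induction, and the paper computes the full ONB-sum in closed form, $\mathcal{S}_{\pi_p,p}(D_q)=\langle\chi_\mu|_{U},\widetilde{\boldsymbol{\psi}}\rangle_{U(\Bbb{Z}/p\Bbb{Z})}=1$, via Frobenius reciprocity, Gelfand--Graev multiplicity one, and Silberger--Zink's genericity of finite-field cuspidals. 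This character-theoretic computation of the \emph{sum} is exactly what lets one bypass the explicit construction of a single Whittaker-normalised test vector, which is the hard part of your plan. So your outline of the reduction is sound, but the crucial local lemmas (Lemma~\ref{lm:reduction} for the non-unitary reduction, and the Gelfand--Graev/character computation of Lemma~\ref{lm:sc_all_n}) are missing, and the single-vector substitute you propose would fail in the Steinberg case without them.
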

Here the denominator is essentially the index $[ {\rm SL}_n(\Bbb{Z}) : \Gamma(q)]$ and the numerator is the index $[\Gamma(q)^{\natural} \cap U(\Bbb{Q}) : U(\Bbb{Z})]$. The proof of Theorem \ref{thm15} depends ultimately on classification results for $p$-adic representations, and for computations it is useful to note that the principal congruence subgroup is not only normal, but also admits an Iwahori factorization with respect to all standard parabolic subgroups (so it is ``good'' in the sense of \cite[Section 23]{NV}). 
Locally, we need to understand the average
\begin{equation}\label{int}
	\sum_{v\in {\rm ONB}(\pi_p^{\Gamma(q)^{\natural}})} \int^{\text{st}}_{U(\Bbb{Q}_p)} \theta^{-1}(x) \langle \pi(x) v, v\rangle \dd x,
\end{equation}
where ${\rm ONB}(\pi_p^{\Gamma(q)^{\natural}})$ is any orthonormal basis of $\Gamma(q)^{\natural}$-fixed vectors in $\pi_p$ and $\int^{\text{st}}$ denotes the stable integral as defined in \cite[Definition~2.1]{LM}. Suppose for instance that $\pi_p$ is a depth-zero supercuspidal representation. Any such representation can be obtained from a cuspidal representation $\pi_0$ of ${\rm GL}_n(\Bbb{Z}/p\Bbb{Z})$. Then the integral \eqref{int} depends only on the character of $\pi_0$ at elements in $ U(\Bbb{Z}/p\Bbb{Z})$, 
so one can compute \eqref{int} quite explicitly. For $q = p^k$ with $k > 1$, the bottleneck is the treatment of supercuspidal representations of higher depth. It should be possible to treat these using more general constructions via induction from open compact subgroups. The methods in \cite{NV} provide robust  tools in the archimedean (microlocal) case, and it should be possible to translate them to the $p$-adic setting; see also \cite[Section 2.5]{Ne}. We hope to return to this in the future.

\subsection{Two arithmetic counting problems}\label{appl} As observed by Sarnak, closely related to the density conjecture for the group $\Gamma(q) \subseteq {\rm SL}_n(\Bbb{Z})$ are two very concrete arithmetic applications that are in some sense the analogue of the distribution of primes in arithmetic progressions for density theorems of $L$-functions. Both of them only need the case $v = \infty$ in Theorem \ref{thm1}. 

The first one is a rather innocent looking counting  problem:  Count the number of matrices $\gamma \in \Gamma(q)$ of norm $\leq T$. As long as $q$ is fixed and $T$ tends to infinity, this has been established in much broader context in \cite{DRS}. The problem gets much harder if we are aiming for a bound that is uniform in $q$. By a volume argument, we expect something of the size $T^{n(n-1)}/q^{n^2-1}$. In analogy with primes $p \leq x$ in arithmetic progressions modulo $q$, where the Riemann hypothesis predicts a main term of size (roughly) $x/q$ and  an error term of size (roughly) $x^{1/2}$, one may optimistically hope to obtain a correction term of size $T^{n(n-1)/2}$ for the matrix counting problem. This was conjectured by Sarnak-Xue \cite{SX} in rank 1 and proved by an essentially elementary technique, which then implied the density conjecture in rank 1. In higher rank, this counting problem becomes surprisingly hard and has remained completely open until now (even for $n=3$, say).  Here we turn tables and try to derive it as a consequence of the density conjecture.\footnote{This leads to an interesting logical sequence which in rather simplified form looks as follows: in order to solve the matrix  counting problem, we apply the pretrace formula to translate it into a spectral problem, add artificially unipotent periods, apply the Kuznetsov formula backwards, and obtain a very different arithmetic problem of counting double cosets in the definition of Kloosterman sums that we are able to solve in a best possible way.}

The second application is a beautiful lifting property. As is well-known, the map ${\rm SL}_n(\Bbb{Z}) \rightarrow {\rm SL}_n(\Bbb{Z}/q\Bbb{Z})$ is surjective. Given some $g \in {\rm SL}_n(\Bbb{Z}/q\Bbb{Z})$, how small is the smallest preimage? This is the automorphic analogue of finding the smallest prime in an arithmetic progression. 
Since the term $T^{n(n-1)}/q^{n^2-1}$ in Theorem \ref{thm2} comes from the volume and cannot be improved, by the pigeonhole principle we cannot ask for anything better than a preimage of norm $q^{1 + 1/n}$. But even this is not true in general. Already for $n = 2$ and $q$ a large power of 2, Sarnak \cite{Sa2} constructed a matrix $g$ whose smallest preimage is as big as $q^{8/3}$. However, generically, i.e.\ for almost all $g \in {\rm SL}_n(\Bbb{Z}/q\Bbb{Z})$, the pigeonhole principle should give the correct answer. This is Sarnak's \emph{optimal lifting property}. See \cite{GK, KL, JK} from more further discussion. 

A crucial input for both applications is a density theorem, and interestingly a spectral gap (even a numerically strong one) alone does not suffice.  For an illuminating discussion of the various connections and logical implications of the density conjecture and the associated counting problems for co-compact subgroups see \cite{GK}.

 In the present case of a non-cocompact congruence subgroup, one has to treat Eisenstein series in the pre-trace formula (see Section \ref{sec8}), and to this end we need some information on the $L^2$-growth of Eisenstein series. Our results are conditional on the assumption that the norms of all relevant truncated (adelic) Eisenstein series for ${\rm GL}_n(\Bbb{A})$ are essentially bounded in the $q$-aspect and of at most polynomial growth in the spectral aspect:
\begin{equation}\label{11}
\| \Lambda^T E(.,\phi,\lambda)\|^2    \ll_{T, \varepsilon} q^{\varepsilon} | \mu_{\sigma_{\infty,\lambda}}|^{O(1)}.
\end{equation}
For the notation and the precise conditions we refer to Section \ref{sec8} where it is formulated as Hypothesis \ref{hyp_A}. We discuss this hypothesis as well as recent developments to remove it after stating the following two theorems.

\begin{theorem}\label{thm2} Assume Hypothesis \ref{hyp_A}. Let $q$ be squarefree, $T \geq 1$, $\varepsilon > 0$.  The number of matrices $\gamma \in \Gamma(q) \subseteq {\rm SL}_n(\Bbb{Z})$ of norm not exceeding $T$ is  
$$\ll_{\varepsilon, n} (Tq)^{\varepsilon}\Big( \frac{T^{n(n-1)}}{q^{n^2 - 1}} + T^{n(n-1)/2}\Big).$$
\end{theorem}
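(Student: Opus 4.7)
The plan is to derive the bound by applying a pre-trace formula on $\Gamma(q)\backslash {\rm SL}_n(\Bbb{R})$ with a suitable spherical test function and then estimating the spectral side using Theorem~\ref{thm1} and Hypothesis~\ref{hyp_A}. Fix a non-negative bi-${\rm SO}_n(\Bbb{R})$-invariant function $k_T$ on ${\rm SL}_n(\Bbb{R})$ majorizing the indicator of $\{\|g\|\leq T\}$, and form the automorphic kernel $K_T(g,h) = \sum_{\gamma \in \Gamma(q)} k_T(g^{-1}\gamma h)$. The count $N(T,q)$ is dominated by $K_T(e,e)$, whose spectral expansion decomposes into the trivial-representation contribution (giving the expected volume main term $T^{n(n-1)}/q^{n^2-1}$), a cuspidal contribution, a residual contribution, and an Eisenstein contribution.

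For the cuspidal contribution I would use the standard bound $|\hat{k}_T(\mu)| \ll T^{n(n-1)/2 + C\sigma}(1+\|\mu\|)^{O(1)}$ on the Harish-Chandra transform, with $\sigma = \max_j |\Re \mu_j|$ and $C = C(n)$, and split cuspidal $\varpi$ into dyadic ranges in $\sigma_\varpi$. In each range Theorem~\ref{thm1} bounds the number of forms by $[{\rm SL}_n(\Bbb{Z}) : \Gamma(q)]^{1 - \frac{2\sigma}{n-1} + \varepsilon} \asymp q^{(n^2-1)(1 - \frac{2\sigma}{n-1} + \varepsilon)}$. After accounting for the pointwise factor $|\varpi(e)|^2$ --- handled by a Sobolev-type local estimate combined with the normalization $\|\varpi\|_2 = 1$ on a space of volume $\asymp q^{n^2-1}$ --- the level-$\sigma$ contribution takes the form
\[
\ll (Tq)^\varepsilon \cdot T^{n(n-1)/2} \cdot \bigl(T^C / q^{2(n+1)}\bigr)^\sigma,
\]
whose maximum over $\sigma \in [0, (n-1)/2]$ is $\ll (Tq)^\varepsilon\bigl(T^{n(n-1)/2} + T^{n(n-1)}/q^{n^2-1}\bigr)$, matching the claim, provided $C \leq n$, which holds for the standard spherical majorant.

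The main obstacle is controlling the Eisenstein (and residual) contributions, where the non-cocompactness of $\Gamma(q)\backslash {\rm SL}_n(\Bbb{R})$ forces one to integrate $\hat{k}_T$ along the continuous spectrum with a spectral measure encoding Levi subgroup data. Here I would proceed inductively on the rank via Langlands' parameterization: each Eisenstein packet is built from cuspidal data on a proper Levi, and Theorem~\ref{eisen} furnishes the corresponding density bound for the inducing data. Hypothesis~\ref{hyp_A} then supplies the missing polynomial control on truncated Eisenstein norms uniformly in $q$, without which no level-uniform bound on the continuous contribution would be available. With this in hand, the continuous contribution is of the same shape as the cuspidal one, and combining all pieces yields the asserted bound.
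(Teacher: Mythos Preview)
Your overall strategy---pretrace formula plus density---is exactly the paper's, and your numerology for the level-$\sigma$ contribution, namely $(Tq)^\varepsilon T^{n(n-1)/2}(T^n/q^{2(n+1)})^\sigma$, is the right target. But there is a genuine gap at the step you gloss over: the pointwise factor $|\varpi(e)|^2$.

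A Sobolev-type local bound gives only $|\varpi(e)|^2 \ll \|\mu_\varpi\|^K \int_{B_\delta(e)}|\varpi|^2 \leq \|\mu_\varpi\|^K$, with \emph{no} saving in $q$. Plugging this into your dyadic sum, the $\sigma=0$ shell alone contributes $T^{n(n-1)/2}\cdot \mathcal{V}_q$, which is too large by a full factor $\mathcal{V}_q \asymp q^{n^2-1}$. The phrase ``combined with the normalization $\|\varpi\|_2=1$ on a space of volume $\asymp q^{n^2-1}$'' does not produce a pointwise bound of size $1/\mathcal{V}_q$: an $L^2$-normalized eigenfunction need not be small at a given point just because the ambient volume is large.

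The paper supplies precisely this missing $1/\mathcal{V}_q$ by exploiting the normality of $\Gamma(q)$ in ${\rm SL}_n(\Bbb{Z})$. Since $g^{-1}\Gamma(q)g=\Gamma(q)$ for every $g\in{\rm SL}_n(\Bbb{Z})$, one has
\[
\sum_{\gamma\in\Gamma(q)}f(\gamma)=\frac{1}{\mathcal{V}_q}\sum_{\bar g\in{\rm SL}_n(\Bbb{Z}/q\Bbb{Z})}\sum_{\gamma\in\Gamma(q)}f(g^{-1}\gamma g).
\]
Applying the pretrace formula to the inner sum replaces $|\varpi(e)|^2$ by the average $\mathcal{V}_q^{-1}\sum_{\bar g}|\varpi(g)|^2$, and Lemma~\ref{local} (whose proof for the continuous spectrum is where Hypothesis~\ref{hyp_A} actually enters) bounds this by $q^\varepsilon\|\mu_\varpi\|^K/\mathcal{V}_q$. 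With that in hand the paper applies the full-spectrum density bound of Theorem~\ref{eisen} directly, together with the spectral gap \eqref{gap}, to arrive at the stated estimate. So Hypothesis~\ref{hyp_A} is used for the pointwise average (Lemma~\ref{local}), not to control an Eisenstein density as you suggest; the density input for the continuous spectrum is already packaged unconditionally in Theorem~\ref{eisen}.
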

 
As explained above, the key point of this bound is that the second order term features square-root cancellation in the $T$-aspect (and hence is  ``of Ramanujan quality''). With slightly more work, this could be turned into an asymptotic formula with a main term of size $T^{n(n-1)}/q^{n^2-1}$ and an error term of form $T^{n(n-1) - \delta}$ for some $\delta > 0$. This has been obtained in \cite[Corollary 5.2]{GN}. It is easy to see (cf.\ \cite[Section 2.6]{GK}) that a lower bound is given by $T^{n(n-1)}/q^{n^2-1} + (T/q)^{n(n-1)/2} + 1$. 

\begin{theorem}\label{thm3} Assume Hypothesis \ref{hyp_A}. For any $\varepsilon > 0$ there exists $\delta > 0$ with the following property. Let $q$ be squarefree. Then the   number of matrices $\gamma \in   {\rm SL}_n(\Bbb{Z}/q\Bbb{Z})$ without a lift of norm $\leq q^{1 + 1/n + \varepsilon}$ is at most $O(q^{n^2 - 1 - \delta})$. \end{theorem}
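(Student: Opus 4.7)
The plan is to combine a smoothed pretrace formula with the density theorems (Theorem \ref{thm1} for the cuspidal spectrum and its Eisenstein extension Theorem \ref{eisen} under Hypothesis \ref{hyp_A}) and conclude through a Chebyshev argument. Set $T = q^{1+1/n+\varepsilon}$ and, for $g \in {\rm SL}_n(\Bbb{Z}/q\Bbb{Z})$, write
\[ N(g;T) := \#\{\gamma \in {\rm SL}_n(\Bbb{Z}) : \|\gamma\| \leq T, \ \gamma \equiv g \pmod{q}\}. \]
The trivial representation contributes the expected value $M(T) \sim T^{n(n-1)}/q^{n^2-1} \sim q^{n(n-1)\varepsilon}$. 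Chebyshev's inequality gives $\#\{g : N(g;T) = 0\} \leq V(T)/M(T)^2$, where $V(T) := \sum_g |N(g;T) - M(T)|^2$, so it suffices to prove $V(T) \ll q^{n^2-1+\varepsilon'}$ with some $\varepsilon' < 2n(n-1)\varepsilon$.

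To this end, approximate $\mathbf{1}_{B_T}$ by a smooth bi-$K$-invariant bump $f_T$ and apply the pretrace formula on $Y(q) := \Gamma(q)\backslash {\rm SL}_n(\Bbb{R})/{\rm SO}_n(\Bbb{R})$:
\[ F(y) := \sum_{\delta \in \Gamma(q)} f_T(\delta y) = M(T) + \sum_{\varpi \neq 1} \widehat f_T(\mu_\varpi)\varpi(1)\overline{\varpi(y)} + (\text{Eisenstein part}). \]
For $g$ in the ${\rm SL}_n(\Bbb{Z})$-orbit of the identity coset we have $F(g) \approx N(g;T)$, and since this orbit consists of $\sim q^{n^2-1}$ well-separated points in a fundamental domain of volume $\sim q^{n^2-1}$, a standard smoothing argument (convolving delta functions at each $g$ with a fixed-scale bump on $G$) yields
\[ V(T) \ll \|F - M(T)\|_{L^2(Y(q))}^2 = \sum_{\varpi \neq 1} |\widehat f_T(\mu_\varpi)|^2 |\varpi(1)|^2 + (\text{continuous spectrum}). \]

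Now stratify the spectral sum according to $\sigma := \sigma_{\varpi,\infty}$. Standard spherical transform estimates give $|\widehat f_T(\mu_\varpi)|^2 \ll_M T^{n(n-1)+2(n-1)\sigma+\varepsilon}$, while an $L^2(Y(q))$-normalized cusp form with bounded archimedean parameter has $|\varpi(1)|^2$ typically of size $q^{-(n^2-1)}$ (via local Weyl applied through the pretrace formula at the diagonal). Theorem \ref{thm1} (respectively Theorem \ref{eisen} for the continuous part) bounds the number of exceptional $\varpi$ with $\sigma_{\varpi,\infty} \geq \sigma$ by $[{\rm SL}_n(\Bbb{Z}):\Gamma(q)]^{1 - 2\sigma/(n-1) + \varepsilon}$. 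A short computation
\[ T^{n(n-1)+2(n-1)\sigma} \cdot q^{-(n^2-1)} \cdot q^{(n^2-1)(1-2\sigma/(n-1))} \ll q^{n^2-1 - 2\sigma(n+1)/n + O(\varepsilon)} \]
shows that the stratum-$\sigma$ contribution is maximized at $\sigma = 0$, producing $V(T) \ll q^{n^2-1+O(\varepsilon)}$. Dividing by $M(T)^2 = q^{2n(n-1)\varepsilon}$ and choosing $\varepsilon$ small enough relative to the implicit constants gives $V(T)/M(T)^2 \ll q^{n^2-1-\delta}$ for some $\delta > 0$.

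The main obstacle will be the careful control of $|\varpi(1)|^2$: in the absence of newform theory for $\Gamma(q)$, cusp forms lifted from smaller level need attention when passing from a counting statement to a pointwise bound at the identity coset. A stratified or $|\varpi(1)|^2$-weighted form of Theorem \ref{thm1} appears necessary, in the spirit of the local representation-theoretic analysis carried out in Section \ref{sec6} for the proof of Theorem \ref{thm15}. The Eisenstein contribution to $V(T)$ is handled by Theorem \ref{eisen} and depends on Hypothesis \ref{hyp_A}, which is precisely why Theorem \ref{thm3} is stated conditional on that hypothesis.
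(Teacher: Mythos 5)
Your high-level strategy — a second-moment / Chebyshev argument fed by the pretrace formula and a density theorem — is indeed what the paper does, but there are two genuine gaps in the proposal.

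First, the numerology does not close. The spherical transform bound \eqref{sph} gives $|\tilde f(\mu)|^2 \ll T^{n(n-1) + 2n\|\Re\mu\| + \varepsilon}$, not $T^{n(n-1)+2(n-1)\sigma}$ as you write. Redoing your ``short computation'' with the correct exponent $2n\sigma$ and $T=q^{1+1/n}$ gives a contribution of exactly $q^{n^2-1}$ at \emph{every} stratum $\sigma$ — the argument breaks even, producing $V(T)/M(T)^2 \ll 1$ rather than a power saving. The paper's resolution is structural: set $T_0 = (P^{-K}q^{n+1})^{1/n}$ (so the density bound of Theorem~\ref{eisen} is applicable with $T_0^n$), write $T^{2n\sigma} \leq T_0^{2n\sigma}(T/T_0)^{2n\mu^*}$ using the spectral gap $\mu^* = (n-2)/2$ from \eqref{gap}, and exploit that the exponent $2n\mu^* = n(n-2)$ is strictly smaller than $n(n-1)$. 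This mismatch of exponents is where the power saving $q^{-\delta}$ comes from; without the spectral gap input the Chebyshev bound is not $o(q^{n^2-1})$.

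Second, your treatment of the weight $|\varpi(1)|^2$ is the crux, and the paper's mechanism is different from what you suggest. Individual Fourier/value normalizations can fluctuate wildly without newform theory for $\Gamma(q)$, and asserting $|\varpi(1)|^2$ is ``typically of size $q^{-(n^2-1)}$'' is not an argument. The paper does \emph{not} prove a $|\varpi(1)|^2$-weighted density theorem. Instead it sets up the variance with a two-variable automorphic kernel $F(z,w;\bar g)$, integrates over $w \in \Gamma(1)\backslash {\rm SL}_n(\Bbb{R})$ and sums over $\bar g$ to produce an exact Parseval identity $W(z)=\int^* |\tilde f(\mu_\varpi)|^2|\varpi(z)|^2\,\dd\varpi$ (no smoothing needed), and then uses the normality of $\Gamma(q)$ in ${\rm SL}_n(\Bbb{Z})$ to average $W(I_n)=\mathcal{V}_q^{-1}\sum_{\bar g} W(g)$ and invoke Lemma~\ref{local}, the key averaged sup-norm bound $\sum_{\bar g} |\varpi(g)|^2 \ll q^\varepsilon \|\mu\|^K$. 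This is the step where Hypothesis~\ref{hyp_A} actually enters (for the continuous spectrum inside Lemma~\ref{local}), not merely through Theorem~\ref{eisen}. Your instinct that extra local input in the spirit of Section~\ref{sec6} is needed points in the right general direction, but the actual device is this coset-averaged pointwise bound, which is proved via spherical-function local arguments and truncated Eisenstein $L^2$-norms, independent of the Whittaker-period analysis of Theorem~\ref{thm15}.
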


In other words,  since  $| {\rm SL}_n(\Bbb{Z}/q\Bbb{Z})| \asymp q^{n^2 - 1 }$, ``almost all''  matrices $\gamma \in   {\rm SL}_n(\Bbb{Z}/q\Bbb{Z})$ have the optimal lifting property.

By Langlands'   inner product formula  (also referred to as the  Maa{\ss}-Selberg relations), the left hand side of \eqref{11}   can be   expressed in terms of quotients of Rankin-Selberg $L$-functions  $L(s, \pi_1 \times \pi_2)$ for $\Re s \geq 1$ for cusp forms $\pi_j$ on ${\rm GL}_{n_j}(\Bbb{A})$ with $n_1 + n_2 \leq n$. To show that for residual Eisenstein series no values of $s$ inside the critical strip contribute requires non-trivial cancellation of terms in the Maa{\ss}-Selberg relations; see \cite{Mi} for $n=3$ and      \cite{JK} for the details in the general case (cf.\ also \cite{GL}). In particular, in this way we see that Hypothesis \ref{hyp_A} follows from the following weak zero-free region (and hence a fortiori from GRH):    whenever  $n_1+n_2\leq n$ there exists $A > 0$ such that we have
\begin{equation}\label{zero}
	L(1 + it, \pi_1 \times \tilde{\pi}_2) \gg c_{\text{fin}}(\pi_1\times\tilde{\pi}_2)^{-\varepsilon} \big((1 + |t|)c_{\infty}(\pi_1 \times \tilde{\pi}_2)\big)^{-A} \end{equation}
for all cuspidal automorphic representations $\pi_i$ of ${\rm GL}_{n_i}(\Bbb{A})$ with $c_{\text{fin}}(\pi_i)\mid q^{n_i}$. In particular, by functoriality of ${\rm GL}(2) \times {\rm GL}(2)$ Rankin-Selberg convolution \cite{Ra} together with the standard zero-free region \cite[Theorem 5.10]{Iw}, Siegel's bound \cite{Si} and  the non-existence of Siegel zeros for standard $L$-functions on ${\rm GL}(2)$ \cite{HL}, standard $L$-functions on ${\rm GL}(3)$ \cite{Ba} and Rankin-Selberg $L$-functions on ${\rm GL}(2) \times {\rm GL}(2)$ \cite{RW}, this is known for $n \leq 4$ (for which  Theorems \ref{thm2} and \ref{thm3} become unconditional). See \cite{Br} for general zero-free regions (which however is not of the strength required in \eqref{zero}).

 An inspection of  the proof of Lemma \ref{local} shows that we only need a certain averaged version of Hypothesis \ref{hyp_A}. After this paper was submitted, Jana and Kamber managed to prove this, cf.\ \cite[Section 1.3]{JK}. In particular, Theorems \ref{thm2} and \ref{thm3} are now unconditional.

\subsection{Some remarks on the numerology}\label{sec14} We end the introduction with a quick synopsis of the exponents appearing in the various statements presented so far. 
 The spectral side of the Kuznetsov formula (Lemma \ref{kuz-formula}) for the group $\Gamma(q)^{\natural}$ features an expression of the type
 \begin{equation}\label{lhs}
 \int_{  \mu_{\varpi} \ll 1} |A_{\varpi}(1)|^2 T^{2\sigma_{\varpi,v}} \dd \varpi
 \end{equation}
(see Section \ref{sec25} for the notation). As long as 
\begin{equation}\label{aslongas}
T \ll q^{n+1},
\end{equation}
 only two Weyl group elements contribute to the Kloosterman side: the trivial Weyl element and $w_*$, as defined in \eqref{wast} below.  The Kloosterman sums corresponding to the other Weyl elements vanish\footnote{For the group $\Gamma_0(q)$ one can derive the original density conjecture (not the improved version of \cite{Bl}) without any analysis of Kloosterman sums, but it is interesting to note that this is not possible here -- the analysis of $w_{\ast}$ is absolutely crucial.}, since their moduli are divisible by such high $q$-powers that they are outside the support of test function on the Kloosterman side (Lemma \ref{lem41}). 
The contribution of the trivial Weyl element is of size  $\mathcal{N}_q = q^{n(n-1)(n-2)/6}$ as defined in \eqref{vqnq}. For $w_{\ast}$, the admissible moduli $c \in \Bbb{Z}^{n-1}$ satisfy $q^n \mid c_j \ll T \ll q^{n+1}$, so that they are all of the form $c_j= q^n\gamma_j$ with $\gamma_j \ll q$. For simplicity let us assume $(\gamma_j, q) = 1$. We factor the Kloosterman sums to moduli $c$ into one with moduli $(q^n, \ldots, q^n)$ which is bounded in Theorem \ref{thm32} by $\mathcal{N}_qq^{(n-1)^2}$. This is sharp. The remaining unramified Kloosterman sum with moduli $\gamma$ is estimated trivially. In this way  we conclude that the contribution of $w_*$ is $\ll \mathcal{N}_q T^{n-1}q^{1-n^2}$. By \eqref{aslongas} this is not  more than the trivial Weyl element (which is the reason why \eqref{aslongas} is the critical bottleneck). We conclude that   \eqref{lhs} is bounded by $\mathcal{N}_q$, provided that \eqref{aslongas} holds. Now Theorem \ref{thm15} implies that the weight $|A_{\varpi}(1)|^2$ can essentially be dropped at the cost of a factor $\mathcal{N}_q /\mathcal{V}_q$, as defined in \eqref{vqnq}.  Dividing both sides by this factor, we conclude
  \begin{equation*}
 \int_{  \mu_{\varpi} \ll 1}  T^{2\sigma_{\varpi,v}} \dd \varpi \ll \mathcal{V}_q^{1+o(1)} ,  
 \end{equation*}
 provided that \eqref{aslongas} holds.  Of course, on the left hand side we can replace $\Gamma(q)^{\natural}$ with $\Gamma(q)$, and the right hand side is essentially $[{\rm SL}_n(\Bbb{Z}) : \Gamma(q)]$, so this bound is best possible, the only restriction being \eqref{aslongas} which limits the size of the   penalty $T^{2\sigma_{\varpi,v}}$ for large values of $\sigma_{\varpi,v}$. Theorem \ref{thm1} is now an easy consequence (see Section \ref{sec6}). Any numerical improvement in Theorem \ref{thm1} needs a relaxed version of \eqref{aslongas}, but as remarked above, even in the most optimistic scenario this is rather small (and has no influence on the applications discussed in Subsection \ref{appl}).     
  
 \medskip
 
\textbf{A roadmap:} Section \ref{sec2} contains some basic notation. For the reader's convenience we also establish a careful dictionary between the classical and the adelic framework. Section \ref{sec3} is technical in nature and refines \cite[Section 5]{Bl}; this is needed to have polynomial control on $M$ in Theorem \ref{thm1} and all subsequent results. Section \ref{sec4} is the key   input on the arithmetic side of the Kuznetsov formula: a careful analysis of   Kloosterman sums for the principal congruence subgroup. Section \ref{sec5} sets up the Kuznetsov formula, while Section \ref{sec6} provides the key input on the spectral side of   the Kuznetsov formula: lower bounds for local Whittaker functions. In Section \ref{sec-eis} we add the non-cuspidal spectrum to the picture, while the final Section \ref{sec8} deals with applications. 


\section{Basic notation}\label{sec2}

\subsection{Basic conventions}\label{sec21} We regard $n$ as (arbitrary,  but) fixed. For convenience we assume $n \geq 3$, since all statements in the introduction are known for $n=2$. The phrase ``absolute constant'' refers to a constant that may depend on $n$. In this terminology, the letter $K$ denotes a sufficiently large absolute constant, not necessarily the same on each occurrence. 

We write $a \mid b^{\infty}$ to mean that $a$ has only prime factors occurring in $b$. Similarly $(a, b^{\infty})=\sup_{n\in \mathbb{N}} (a,b^n)$ is the greatest divisor of $a$ whose prime divisors divide $b$. 
We denote by $\| . \|$ the maximum norm on vectors, the maximum norm on matrices and the sup-norm on functions. 
We denote by $I_d$  the identity matrix of dimension $d$.

Let $
U \subseteq {\rm GL}_n$ be the subgroup of 
unipotent upper triangular matrices. 
 Let $V \subseteq {\rm GL}_n$ be the group of   diagonal matrices with entries $\pm 1$. Let $T \subseteq {\rm GL}_n$ be the diagonal torus, $\tilde{T}(\Bbb{R}) \subseteq {\rm GL}_n(\Bbb{R})$ the set of diagonal matrices with positive entries and let ${\rm Z}$ denote the center of ${\rm GL}_n$. We write ${Z}^{+} \cong  \Bbb{R}_{>0}$ for the subgroup of diagonal scalar matrices  with positive entries and  $\mathcal{H} = U(\Bbb{R}) \tilde{T}(\Bbb{R})/{Z}^+$. Write $T_0=T\cap {\rm SL}_n$ for the diagonal torus with determinant one.

We will write $P=MN$ for a standard parabolic subgroup and its Levi decomposition. In particular there is a partition $n=n_1+\ldots+n_k$ so that 
$M\cong {\rm GL}_{n_1}\times \ldots\times {\rm GL}_{n_k}$ and $N\subseteq U$. Further let $P^{\op}=M^{\op}N^{\op}$ be the standard parabolic subgroup conjugate to $P^{\top}$.

Let $W$ be the Weyl group. For our purpose it is important to choose representatives with determinant 1 (since the principal congruence group $\Gamma(q)$ has no elements with determinant $-1$ for $q > 2$), so we identify $W$ with  permutation matrices where potentially the entry in first row (say) is $-1$. 
We identify a permutation matrix $w = (w_{ij}) \in W$ with the  permutation $ i \mapsto j$ for $w_{ij} = \pm 1$. For $w \in W$ we define 
\begin{equation}\label{Uw}
U_w = w^{-1} U^{\top} w \cap U.
\end{equation} Important Weyl elements for us are the special element $w_{\ast}$ and the long element $w_l$ defined by
\begin{equation}\label{wast}
w_{\ast} = \left(\begin{smallmatrix} && -1\\ & I_{n-2} &\\ 1 && \end{smallmatrix}\right)\quad  \text{ and } \quad w_{l} = \left(\begin{smallmatrix} && \pm 1\\ & \iddots &\\ 1 && \end{smallmatrix}\right).
\end{equation}

\subsection{Embeddings, coordinates and measures}

The Haar measure on $U(\Bbb{R})$ is given by    $\dd x = \prod_{1 \leq i < j \leq n} \dd x_{ij}$. We write  $\dd x$ also for the induced measure on the subgroup $U_{w}(\Bbb{R})$. 

We embed   $y = (y_1, \ldots, y_{n-1}) \in \Bbb{G}_m^{n-1}$ into $T$ as 
\begin{equation}\label{iota}
\iota(y) = \text{diag}(y_{n-1} \cdots  y_1, \ldots, y_2y_1, y_1, 1) = \text{diag}\big((y_1 \cdots y_{n-j})_{1 \leq j \leq n}\big)
\end{equation}
with the empty product defined as 1. We multiply two elements in $y, y' \in \Bbb{G}_m^{n-1}$ componentwise, written $y \cdot y'$, so that $\iota$ is a homomorphism. 
 The Iwasawa decomposition gives a unique decomposition $g = x y k \alpha \in {\rm GL}_n(\Bbb{R})$ with $x \in U(\Bbb{R})$, $y \in \tilde{T}(\Bbb{R})$, $k \in {\rm O}_n(\Bbb{R})$, $\alpha \in {Z}^+$.  We write ${\rm y}(g) = \iota^{-1}y \in \Bbb{R}_{>0}^{n-1}$ for $(n-1)$-tuple of  Iwasawa $y$-coordinates.  In particular, for $g = \text{diag}(y_1, \ldots, y_n) \in \tilde{T}(\Bbb{R})$ we have 
\begin{equation}\label{y}
  {\rm y}(g) = \Big(\frac{y_{n-1}}{y_n}, \ldots, \frac{y_1}{y_2}\Big) \in \Bbb{R}_{>0}^{n-1}.
  \end{equation}
 For $w \in W$, $y \in \Bbb{R}_{>0}^{n-1}$ we write 
 \begin{equation}\label{wy}
   {\rm y}(w\iota(y)^{-1} w^{-1}) =: {}^wy  := ({}^wy_1, \ldots, {}^wy_{n-1})
   \end{equation}
    for the Iwasawa $y$-coordinates of $w\iota(y)^{-1} w^{-1}$.  

For $\alpha\in \Bbb{C}^{n-1}$, $y \in \Bbb{R}_{>0}^{n-1}$ we write $y^{\alpha} = y_1^{\alpha_1} \cdots y_{n-1}^{\alpha_{n-1}}\in \Bbb{C}$. 
Let 
\begin{equation}\label{eta}
\eta = (\eta_1, \ldots, \eta_{n-1}) =   \Big(\frac{1}{2}j(n-j)\Big)_{1 \leq j \leq n-1} \in \Bbb{Q}^{n-1}. 
\end{equation}
We define a measure on $\Bbb{R}_{>0}^{n-1}$   by 
\begin{equation}\label{measure}
\dd^{\ast}y = y^{-2\eta} \frac{\dd y_1}{y_1} \cdots \frac{\dd y_{n-1}}{y_{n-1}}
\end{equation}
 and correspondingly an inner product   by 
\begin{equation}\label{inner}
\langle f, g \rangle = \int_{\Bbb{R}_{>0}^{n-1}} f(y) \bar{g}(y) \dd^{\ast}y.
\end{equation}
We denote the push forward of  $\dd^{\ast}y$ to $\tilde{T}(\Bbb{R})$ by $\iota$ also by $\dd^{\ast}y$. 

 We define a different embedding of $\Bbb{R}_{>0}^{n-1}$ into $T(\Bbb{R})$  by 
 \begin{equation}\label{cast}
 c = (c_1, \ldots, c_{n-1}) \mapsto c^{\ast} = \text{diag}\Big(\frac{1}{c_{n-1}}, \frac{c_{n-1}}{c_{n-2}}, \ldots, \frac{c_2}{c_1}, c_1\Big).
 \end{equation}

\subsection{Subgroups and characters}
 For  $N   \in \Bbb{Z}^{n-1}$ we define a character $\theta_N : U(\Bbb{R})/U(\Bbb{Z}) \rightarrow S^1$ by 
\begin{equation}\label{character}
   \theta_N(x) = e(N_{n-1} x_{12} + \ldots + N_{1} x_{n-1, n})
.\end{equation}
 For $v \in V$ we write $\theta_N^v(x) = \theta_N(v^{-1} x v)$. 

Let $q \in \Bbb{N}$ and $\Gamma(q) 
\subseteq {\rm SL}_n(\Bbb{Z})$  be the principal congruence subgroup. Let 
\begin{equation}\label{dq}
   D_q =  \text{diag}(q^{n-1}, \ldots, q, 1)
\end{equation} 
and write 
\begin{equation}\label{gq}
	\Gamma(q)^{\natural} = D_q^{-1}\Gamma(q) D_q  = \left(\begin{matrix} 1 + q\Bbb{Z}& \Bbb{Z}& \frac{1}{q}\Bbb{Z} & \cdots & \frac{1}{q^{n-2}} \Bbb{Z}\\ q^2\Bbb{Z} & 1 + q\Bbb{Z} & \Bbb{Z} & \cdots& \frac{1}{q^{n-3}}\Bbb{Z}\\\vdots  & \ddots & \ddots&\ddots&\vdots\\ 
	q^{n-1}\Bbb{Z} &  \cdots && 1 + q\Bbb{Z} &\Bbb{Z} \\
	q^n \Bbb{Z} & q^{n-1}\Bbb{Z} &  \cdots & q^2\Bbb{Z} & 1 + q\Bbb{Z}
	\end{matrix}\right) \cap {\rm SL}_n(\Bbb{Q}).
\end{equation}
In contrast to $\Gamma(q)$, the conjugated group $\Gamma(q)^{\natural}$ has the advantage   that 
\begin{equation}\label{adv}
   U(\Bbb{Z}) \subseteq \Gamma(q)^{\natural} \quad \text{and} \quad \theta_N \text{ is trivial on } \Gamma(q)^{\natural}\cap U(\Bbb{R}).
\end{equation}
In particular, $U(\Bbb{Z})$ acts on $\Gamma(q)^{\natural}$ both from the left and the right. Let
\begin{equation}\label{vqnq}
\begin{split}
&	\mathcal{V}_q = [{\rm SL}_n(\Bbb{Z}) : \Gamma(q)]  = q^{n^2 - 1 + o(1)},\\
	& \mathcal{N}_q = [\Gamma(q)^{\natural} \cap U(\Bbb{Q}) : U(\Bbb{Z})] = q^{n(n-1)(n-2)/6}.
	\end{split}
\end{equation}
These are the quantities appearing in Theorem \ref{thm15}. 

\subsection{Whittaker functions}
For $\mu = (\mu_1, \ldots, \mu_n) \in \Bbb{C}^{n}$  satisfying
  \begin{equation}\label{unit}
     \mu_1 + \ldots + \mu_n = 0, \quad \{\mu_1, \ldots, \mu_n\} = \{-\bar{\mu}_1, \ldots ,- \bar{\mu}_n\}
     \end{equation}
we define   $\nu \in \Bbb{C}^{n-1}$ by
 $$    \nu_j = \frac{1}{n} (\mu_j - \mu_{j+1}), \quad 1 \leq j \leq n-1.$$
We denote by  ${W}_{\mu} : \Bbb{R}_{>0}^{n-1} \rightarrow \Bbb{C}$   the normalized Whittaker function 
\begin{equation}\label{wmu}
{W}_{\mu}(y) = \int_{U(\Bbb{R})} I_{\nu}\big( 
w_l x \iota(y) \big) \theta_{(1, \ldots, 1)}(x)^{-1} \dd x
\end{equation}
with the power function
\begin{displaymath}
  I_{\nu}(g) = \prod_{i, j = 1}^{n-1} |{\rm y}(g)_{i}|^{b_{ij} (\frac{1}{n}+ \nu_j)}, \quad b_{ij} = \begin{cases} ij, & i+j \leq n,\\ (n-i)(n-j), & i+j \geq n. \end{cases}
\end{displaymath}  
This differs from the completed Whittaker function by a factor
\begin{equation}\label{norm}
G(\mu) :=   \prod_{1 \leq j \leq k \leq n-1} \Gamma_{\Bbb{R}} (1 + n(\nu_j+\ldots + \nu_k)), \quad \Gamma_{\Bbb{R}}(s) = \Gamma(s/2)\pi^{-s/2}.
\end{equation}

\subsection{Automorphic forms}\label{sec25}

In the interest of a compact notation, for a congruence subgroup $\Gamma\subset {\rm SL}_n(\Bbb{R})$ we denote by 
\begin{equation}\label{spec}
\int_{\Gamma} \mathcal{V}_{\varpi}\dd \varpi
\end{equation}
a spectral decomposition of $L^2(\Gamma\backslash {\rm SL}_n(\Bbb{R})/{\rm SO}_n(\Bbb{R}))$ respecting the unramified Hecke algebra. Note that the spaces $\mathcal{V}_{\varpi}$ are one-dimensional, and we use $\varpi$ to denote a suitably normalized generator of $\mathcal{V}_{\varpi}$.  
The  standard   inner product is
\begin{equation}\label{standinner}
\langle \varpi_1, \varpi_2 \rangle = \int_{\Gamma \backslash \mathcal{H}} \varpi_1(xy) \overline{\varpi_2(xy)} \dd x\, \dd^{\ast}y.   
\end{equation}
The relevant spectral decomposition is a special case of Langlands' general theory; see e.g.\ \cite{Ar} for a convenient summary in adelic language. Each element $\varpi$ in the decomposition above has spectral parameter $\mu_{\varpi} = (\mu_1, \ldots, \mu_n)$ related to the eigenvalues of invariant different operators. This spectral parameter agrees with the local archimedean Langlands parameter $\mu_{\pi_{\infty}}$ of the associated automorphic representation $\pi=\otimes_v\pi_v$ and satisfies \eqref{unit}.

We now specialize to the group $\Gamma = \Gamma(q)^{\natural}$, defined in \eqref{gq}. Set $$X_q=\Gamma(q)^{\natural}\backslash {\rm SL}_n(\Bbb{R})/{\rm SO}_n(\Bbb{R}).$$ For the adelisation process described below we will need the following decomposition of the space $L^2(X_q)$.  Let $\Gamma_1(q)\subset {\rm SL}_n(\Bbb{Z})$ be the set of matrices 
that are diagonal modulo $q$. 
Put $\Gamma_1(q)^{\natural}=D_q^{-1}\Gamma_1(q)D_q$. Given $\boldsymbol{\chi}=(\chi_1, \ldots,\chi_{n-1})$, where $\chi_1,\ldots,\chi_{n-1}$ are Dirichlet characters modulo $q$, we set 
\begin{multline}
	L^2(X_q,\boldsymbol{\chi}) = \{ f\in L^2(X_q)\colon f(\gamma g) = \chi_1(\gamma_{11})\cdots \chi_{n-1}(\gamma_{n-1,n-1})\cdot f(g)\\  \text{ for all }g\in {\rm SL}_n(\Bbb{R}), \, \gamma = (\gamma_{ij})\in \Gamma_1(q)^{\natural}\},
\end{multline}
We have the spectral refinement
\begin{equation}
	L^2(X_q) = \bigoplus_{\boldsymbol{\chi}\in \widehat{T_0}(\Bbb{Z}/q\Bbb{Z})} L^2(X_q,\boldsymbol{\chi}).\nonumber
\end{equation}

For $\varpi \in L^2(X_q)$ and $N \in \Bbb{Z}^{n-1}$ we denote by
$$ \mathcal{W}_{\varpi}(y; N) =  \int_{U(\Bbb{Z}) \backslash U(\Bbb{R})} \varpi(xy) \theta_N^{-1}(x) \dd x$$ 
its $N$-th Fourier coefficient. This is a multiple of the Whittaker function $W_{\mu}$ as in \eqref{wmu} associated with the archimedean Langlands parameters $\mu = \mu_{\varpi}$, and using \eqref{eta} we decompose it as
$$\mathcal{W}_{\varpi}(y; N)  = \frac{A_{\varpi}(N)}{N^{2\eta}} W_{\mu}(N \cdot {\rm y}(y))$$
for some constant $A_{\varpi}(N) \in \Bbb{C}$. If $\lambda_{\varpi}(m)$ denotes the Hecke eigenvalue with respect to the Hecke operator $T_m$ for $m\in \Bbb{N}$, $(m, q) = 1$, then 
\begin{equation}\label{hecke}
   A_{\varpi}((m, 1, \ldots, 1)) = \lambda_{\varpi}(m) A_{\varpi}((1, \ldots, 1)).
   \end{equation}
For a prime $p \nmid q$ and $\nu > n$ we have
\begin{equation}\label{finite}
\max_{0 \leq j \leq n-1} | \lambda_{\varpi}(p^{\nu-j})| \geq (2 p^{\sigma_{\varpi,p}})^{1-n}   p^{\nu \sigma_{\varpi,p}}
\end{equation}
by \cite[Lemma 4]{Bl}.

\subsection{Adelising Automorphic Forms}\label{sec:adelising}

\subsubsection{The adelic space}

Let $\Bbb{A}$ denote the adele ring of $\Bbb{Q}$. Let $\psi_{\Bbb{A}} = \prod_v \psi_v$ be the usual adelic character on $\Bbb{Q}\backslash \Bbb{A}$, where $\psi_{\infty}=e(\cdot)$. We lift $\psi_{\Bbb{A}}$ to a character $$\boldsymbol{\psi}_{\Bbb{A}}\colon U(\Bbb{Q})\backslash U(\Bbb{A})\to\Bbb{C}^{\times}, \quad x\mapsto \psi(x_{12}+\ldots+x_{n-1,n}),$$ so that the archimedean part is $\boldsymbol{\psi}_{\Bbb{Q}_{\infty}}=\theta_{(1,\ldots,1)}$.

Given a primitive Dirichlet character $\xi$ we write $\omega_{\xi}\colon \Bbb{Q}^{\times}\backslash \Bbb{A}^{\times}\to S^1$ for the unique Hecke character so that the completed Dirichlet $L$-function associated to $\xi$ agrees with the $L$-function associated to $\omega_{\xi}$ by Tate. If $\xi$ is a non-primitive Dirichlet character, then we write $\omega_{\xi}$ for the Hecke character associated to the underlying primitive character (sometimes called the idelic lift of $\xi$). In either case we have the factorization $\omega_{\xi} = \prod_v \omega_{\xi,v}$.

We set $K_{\infty}={\rm SO}_n(\mathbb{R})$, $K_p={\rm GL}_n(\Bbb{Z}_p)$, $K_{{\rm fin}} = \prod_{p} K_p$ and $K = K_{\infty}\times K_{{\rm fin}}$. Further, we define $K(q)\subset K_{{\rm fin}}$ by requiring $$\Gamma(q) = {\rm GL}_n(\Bbb{Q})\cap ({Z}^+{\rm SL}_n(\Bbb{R})\times K(q)).$$ The local components $K_p(q)$ of $K(q)$ are simply the local principal congruence subgroups  
\begin{equation}\label{cpq}
   K_p(q)=\ker(K_p\to {\rm GL}_n(\Bbb{Z}_p/q\Bbb{Z}_p)).
   \end{equation} Also define $K(q)^{\natural}=D_q^{-1}K(q)D_q$, where we embed $D_q$ diagonally in ${\rm GL}_n(\Bbb{A}_{{\rm fin}})$. This is the adelic version of $\Gamma(q)^{\natural}$. In order to apply strong approximation we enlarge $K(q)$ and $K(q)^{\natural}$ slightly. Put $K_1(q) = T(\widehat{\Bbb{Z}})K(q)$ and $K_1(q)^{\natural}=D_q^{-1}K_1(q)D_q$. For this open compact subgroup the map $\det \colon K_1(q)^{\natural}\to \widehat{\Bbb{Z}}^{\times}$ is surjective. By strong approximation we get
	${\rm GL}_n(\Bbb{A}) = {\rm GL}_n(\Bbb{Q})\cdot ({Z}^+{\rm SL}_n(\Bbb{R})\times K_1(q)^{\natural}).$ 
We define the adelic space 
\begin{equation}\label{227a} 
X_q^{\Bbb{A}} = {\rm GL}_n(\Bbb{Q})\backslash {\rm GL}_n(\Bbb{A})^1/K_{\infty}K(q)^{\natural}.
\end{equation} The inner product is given by
\begin{equation}
\langle F,G\rangle = \int_{{\rm GL}_n(\Bbb{Q})\backslash {\rm GL}_n(\Bbb{A})^1} F(g)\overline{G(g)}\dd g, \label{eq:innerproduct_adelic}
\end{equation}
where we use the Tamagawa measure, which satisfies 
${\rm Vol}({\rm GL}_n(\Bbb{Q})\backslash {\rm GL}_n(\Bbb{A})^1,\dd g)=1.$ 
Note that we use ${\rm GL}_n(\Bbb{A})^1$, which is given by those $g\in {\rm GL}_n(\Bbb{A})$ with $|\det(g)|_{\Bbb{A}}=1$, in order to treat possibly different central characters (all of which will be  trivial on ${Z}^+$) simultaneously.

We can decompose the space $L^2(X_q^{\Bbb{A}})$ similarly as the space $L^2(X_q)$ using the bigger group $K_1(q)^{\natural}$. To do so, given an  $n$-tuple $\boldsymbol{\zeta} = (\zeta_1,\ldots, \zeta_n)$ of Dirichlet characters modulo $q$, we define $\boldsymbol{\zeta}\colon K_1(q)^{\natural}\to S^1$ by
\begin{equation}
	\boldsymbol{\zeta}(D_q^{-1}kD_q) = \prod_{p\mid q} \omega_{\zeta_{1}, p}((k_p)_{11})\cdots \omega_{\zeta_{n},p}((k_p)_{nn}).\nonumber
\end{equation}
Define $L^2(X_q^{\Bbb{A}},\boldsymbol{\zeta}) = \{F \in L^2(X_q^{\Bbb{A}})\colon F(gk) = \boldsymbol{\zeta}(k)F(g)\text{ for all } g\in {\rm GL}_n(\Bbb{A})^1,\, k\in K_1(q)^{\natural}\}.$ Note that we have the decomposition $L^2(X_q^{\Bbb{A}}) = \bigoplus_{\boldsymbol{\zeta}}L^2(X_q^{\Bbb{A}},\boldsymbol{\zeta})$. Furthermore, given $F\in L^2(X_q^{\Bbb{A}},\boldsymbol{\zeta})$ the central character of $F$ is determined by $\boldsymbol{\zeta}$. Indeed we have
\begin{equation*}
	F(zg) = \Big(\prod_{i=1}^n \omega_{\zeta_i}(a)\Big) F(g), \quad g\in {\rm GL}_n(\Bbb{A})^1, \quad z=\text{diag}(a,\ldots,a), \quad \vert a\vert_{\Bbb{A}}=1. \nonumber
\end{equation*}

\subsubsection{Adelization and de-adelization}

The de-adelisation map $\mathcal{D}\colon L^2(X_q^{\Bbb{A}}) \to L^2(X_q)$ is given by $[\mathcal{D}F](x) = F(x)$, where we view $x\in {\rm SL}_n(\Bbb{R})\subset {\rm GL}_n(\Bbb{A})^1$. Sometimes we will write $\mathcal{D}F = \varpi_F$. Suppose $F\in X_q^{\Bbb{A}}(\boldsymbol{\zeta})$, then one checks that $\varpi_F(\gamma x) = [\zeta_1\zeta_n^{-1}](\gamma_{11})\cdots[\zeta_{n-1}\zeta_n^{-1}](\gamma_{n-1,n-1})\cdot  \varpi_F(x)$ for all $\gamma\in \Gamma_1(q)^{\natural}$. Therefore the de-adelisation map restricts to
\begin{equation*}
	\mathcal{D}\colon L^2(X_q^{\Bbb{A}},\boldsymbol{\zeta}) \to L^2(X_q,\boldsymbol{\chi}_{\zeta}), \nonumber
\end{equation*} 
where $\boldsymbol{\chi}_{\zeta} = (\zeta_1\zeta_n^{-1},\ldots,\zeta_{n-1}\zeta_n^{-1} )$. 

We come to the adelisation procedure. Given $\varpi\in L^2(X_q,\boldsymbol{\chi})$ and a Dirichlet character $\xi$ modulo $q$ we define
\begin{equation}
	[\mathcal{A}^{(\xi)}]\varpi(g) = \boldsymbol{\zeta}_{\chi,\xi}(k)\varpi(g_{\infty}) \text{ for }g=\gamma zg_{\infty}k \in {\rm GL}_n(\Bbb{Q}) {Z}^+{\rm SL}_n(\Bbb{R})K_1(q)^{\natural}, \nonumber
\end{equation}
for $\boldsymbol{\zeta}_{\chi,\xi} = (\chi_1\xi,\ldots,\chi_{n-1}\xi,\xi)$. This is a slight modification of the standard adelisation procedure for ${\rm GL}_n$ as described in \cite[Section~13.4]{GH}. We call $F_{\varpi}^{(\xi)} =\mathcal{A}^{(\xi)}\varpi$ the $\xi$-adelic lift of $\varpi$. Note that the choice of the character $\xi$ is the only redundancy in the lifting procedure. One easily verifies that $F_{\varpi}^{(\xi)}$ is well-defined, satisfies $F_{\varpi}^{(\xi)}(gk) = \boldsymbol{\zeta}_{\chi,\xi}(k)F_{\varpi}(g)$ for all $g\in {\rm GL}_n(\Bbb{A})$ and all $k\in K_1(q)^{\natural}$ and that the central character of $F_{\varpi}$ is $\omega_{\chi_1}\cdots \omega_{\chi_{n-1}}\cdot \omega_{\xi}^n$. 

Clearly we have $\mathcal{D}\circ\mathcal{A}^{(\xi)}\varpi = \varpi$. One also sees that the redundancy in the lifting procedure is manifested by twisting with $\omega_{\xi}$ on the adelic side. This is reflected in the following diagram:\\
\adjustbox{scale=1,center}{\begin{tikzcd}
	L^2(X_q,\boldsymbol{\chi}) \arrow[d, "\mathcal{A}^{(1)}"'] \arrow[r, "=" description] & L^2(X_q,\boldsymbol{\chi}) \arrow[d, "\mathcal{A}^{(\xi)}"'] \\
	{L^2(X_q^{\mathbb{A}},\boldsymbol{\zeta}_{\chi,1})} \arrow[r, "\omega_{\xi}\otimes"]  & {L^2(X_q^{\mathbb{A}},\boldsymbol{\zeta}_{\chi,\xi})}       
\end{tikzcd}}
Recall that the twisting operation $\omega_{\xi}\otimes$ is defined by $F\mapsto \omega_{\xi}(\det(\cdot))F$.

If $\varpi\in L^2(X_q,\boldsymbol{\chi})$ is cuspidal, then so is the adelisation $F_{\varpi}^{(\xi)}$. A special case of this computation shows that 
\begin{equation*}
	\mathcal{W}_{F_{\varpi}^{(\xi)}}(y) := \int_{U(\Bbb{Q})\backslash U(\Bbb{A})}F_{\varpi}^{(\xi)}(uy)\boldsymbol{\psi}_{\Bbb{A}}(u)^{-1}\dd u = \mathcal{W}_{\varpi}(y) = A_{\varpi}(1)W_{\mu_{\varpi}}({\tt y}(y)) 
\end{equation*} 
for $y \in \tilde{T}(\Bbb{R}) \subseteq {\rm GL}_n(\Bbb{A})$. 

\subsubsection{Spectral decomposition}
Suppose that $\varpi$ is a cuspidal joint eigenfunction of the ring of invariant differential operators and the unramified Hecke algebra, then $F_{\varpi}^{(\text{triv})}$ generates a cuspidal automorphic representation $\pi_{\varpi}$ and $F_{\varpi}^{(\xi)}$ generates $\omega_{\xi}\otimes\pi_{\varpi}$. Given a cuspidal automorphic representation $\pi$ we abuse notation and write 
\begin{equation}\label{pimidxq}
\pi\mid L^2(X_q)
\end{equation} if there is $\varpi\in L^2(X_q,\boldsymbol{\chi})\subset L^2(X_q)$ with $\omega_{\xi}\otimes \pi_{\varpi}\cong \pi$ for some Dirichlet character $\xi$ modulo $q$. In order to exploit the language of (adelic) automorphic representations, in particular the inherited factorisation properties, we want to construct a spectral decomposition for $L^2(X_q)$ parametrized by automorphic representations $\pi\mid L^2(X_q)$. To do so it is important to keep the twist redundancy in mind. Let us make this precise. Each irreducible unitary automorphic representation $\pi$ of ${\rm GL}_n(\Bbb{A})^1$ has a finite dimensional subspace $\pi^{K(q)^{\natural}}$ of right $K(q)^{\natural}$-invariant elements that are spherical at infinity. If $\pi^{K(q)^{\natural}}\neq \{0\}$, then $\pi\mid L^2(X_q)$. In general we write $\text{\rm ONB}(\pi^{K(q)^{\natural}})$ for an orthonormal basis of this space and we put $$V_{\pi} = \mathcal{D}(\pi^{K(q)^{\natural}})\subset L^2(X_q).$$ This space $V_{\pi}$ is precisely the linear subspace of $L^2(X_q)$ associated to a generic irreducible automorphic representation that is studied in Theorem~\ref{thm15}. (Note that it is finite-dimensional even though $\pi$ itself is infinite-dimensional.) We write the cuspidal part of spectral decomposition of $L^2(X_q)$ as  \begin{equation}\label{sum-prime}
	L^2_{\rm cusp}(X_q) = \sideset{}{'}\bigoplus_{\pi\mid L^2(X_q)} V_{\pi}, 
\end{equation}
where $\sideset{}{'}\bigoplus$ stands for the direct sum of the corresponding automorphic representations modulo character twists. It now becomes clear why we introduced different versions of the adelisation process. Indeed, in order to go from $V_{\pi}$ to $\pi^{K(q)^{\natural}}$ we need to combine all of them as follows. Write $V_{\pi}(\boldsymbol{\chi}) = V_{\pi}\cap L^2(X_q,\boldsymbol{\chi})$ and $\pi^{(K_1(q)^{\natural},\boldsymbol{\zeta})} = \pi\cap L^2(X_q^{\Bbb{A}},\boldsymbol{\zeta})$. If $V_{\pi}(\boldsymbol{\chi})\neq \{ 0\}$, then there is a unique adelisation map $\mathcal{A}\colon V_{\pi} \to \pi^{K(q)^{\natural}}$ given fibrewise by $\mathcal{A}^{(\xi)}\colon V_{\pi}(\boldsymbol{\chi}) \to \pi^{(K_1(q)^{\natural},\boldsymbol{\zeta}_{\chi,\xi})}$ for a suitable Dirichlet characters $\xi$ modulo $q$ depending on $\boldsymbol{\chi}$.

\subsubsection{Inner products}

Finally we need to investigate how the inner products behave under adelisation. This is standard, but since the exact  $q$-dependence is absolutely crucial, we give some details. First note that if $\boldsymbol{\chi}_1\neq \boldsymbol{\chi}_2$, then
\begin{equation*}
	\langle F_{\varpi_1}^{(\xi)},F_{\varpi_2}^{(\xi)}\rangle = 0 =\langle \varpi_1,\varpi_2\rangle \text{ for }\varpi_1\in L^2(X_q,\boldsymbol{\chi}_1) \text{ and }\varpi_2\in L^2(X_q,\boldsymbol{\chi}_2).\nonumber
\end{equation*}
Thus we assume $\boldsymbol{\chi}_1= \boldsymbol{\chi}_2$. Using strong approximation for $K_1(q)^{\natural}$ one obtains
\begin{align}
	\langle F_{\varpi_1}^{(\xi)},F_{\varpi_2}^{(\xi)}\rangle &= \text{Vol}(K_{\infty}K_1(q)^{\natural})\cdot \int_{{\rm GL}_n(\Bbb{Q})\backslash{\rm GL}_n(\Bbb{A})^1/K_{\infty}K_1(q)^{\natural}} F_{\varpi_1}^{(\xi)}(x)\overline{F_{\varpi_2}^{(\xi)}(x)}\dd x \nonumber \\
	&= C_{\rm meas}\text{Vol}(K_{\infty}K_1(q)^{\natural})\cdot \int_{\Gamma_1(q)^{\natural}\backslash\mathcal{H}}\varpi_1(xy)\overline{\varpi_2(xy)}\dd x\dd^{\ast}y \nonumber\\
	&= C_{\rm meas}\text{Vol}(K_{\infty}K_1(q)^{\natural})[\Gamma_1(q)^{\natural}\colon \Gamma(q)^{\natural}]^{-1}\cdot \langle \varpi_1,\varpi_2\rangle\nonumber 
\end{align}
for some proportionality constant $C_{\text{meas}}$.  
We can evaluate both sides with $\varpi_1=\varpi_2\equiv 1$ leading to
\begin{displaymath}
\begin{split}
	C_{\rm meas}\text{Vol}(K_{\infty}K_1(q)^{\natural})[\Gamma_1(q)^{\natural}\colon \Gamma(q)^{\natural}]^{-1} &= \text{Vol}(\Gamma(q)^{\natural}\backslash\mathcal{H})^{-1} \\
	&= [{\rm SL}_n(\Bbb{Z})\colon\Gamma(q)]^{-1}\text{Vol}({\rm SL}_n(\Bbb{Z})\backslash\mathcal{H})^{-1}.\nonumber
	\end{split}
\end{displaymath}
In summary we have
\begin{equation}\label{strongapprox}
	\langle F_{\varpi_1}^{(\xi)},F_{\varpi_2}^{(\xi)}\rangle = n\cdot\Big( \prod_{l=2}^n \frac{\pi^{l/2}}{\Gamma(l/2)\zeta(l)}\Big) \cdot \frac{\langle \varpi_1,\varpi_2\rangle}{\mathcal{V}_q}, \quad \varpi_j\in L^2(X_q,\boldsymbol{\chi}_j). 
\end{equation}

Let $\pi\mid L^2(X_q)$ be a $\boldsymbol{\psi}_{\Bbb{A}}$-generic (irreducible) cuspidal representation realised in $L^2_{{\rm cusp}}({\rm GL}_n(\Bbb{Q})\backslash {\rm GL}_n(\Bbb{A})^1)$. We realise the contragredient $\tilde{\pi}$ as $\{\overline{F}\colon F\in \pi\}$. The inner product defined in \eqref{eq:innerproduct_adelic} restricts to an invariant inner product on $\pi$ which we denote by $(\cdot,\cdot)_{\pi}$. 
In particular $\Vert F\Vert_{L^2}^2 = (F,F)_{\pi}$. By Flath's factorization theorem we can write $\pi=\otimes_v \pi_v$ and also the Whittaker model ${\rm Wh}(\pi) = \bigotimes_v {\rm Wh}(\pi_v)$ factors. Note that for $\varpi\in V_{\pi}$ we have $\sigma_{\varpi,v}=\sigma_{\pi_v}$ for all unramified places $v$. This always makes sense since $\sigma_{\pi_v}$ is independent under unitary unramified twists. 

We fix any invariant inner product $\langle \cdot,\cdot \rangle_{{\rm Wh}(\pi_v)}$ on the local Whittaker model ${\rm Wh}(\pi_v)$. For $W_v\in {\rm Wh}(\pi_v)$ (whose $L^2$-norm is nonzero) we define 
\begin{equation*}
	I_v(W_v) = \int_{U(\Bbb{Q}_v)}^{\rm st} \frac{\langle \pi_v(u)W_v,W_v\rangle_{{\rm Wh}(\pi_v)}}{\langle W_v,W_v\rangle_{{\rm Wh}(\pi_v)}} \boldsymbol{\psi}_{\Bbb{Q}_v}(u)^{-1}\dd u.
\end{equation*}
This is independent of the choice of the invariant inner product and automatically normalises $W_v$. Suppose now that $F\in \pi$ corresponds to a pure tensor in $\otimes_v\pi_v$. In this case the Whittaker coefficient factors as $\mathcal{W}_F = \bigotimes_v W_{F,v}$. Fix a finite set $S$ of places containing $\infty$ and all places where $F$ is unramified. According to \cite[Lemma~4.4]{LM} and the Rankin-Selberg 
method we have
\begin{equation*}
	\frac{\vert\mathcal{W}_F(1)\vert^2}{\langle F,F\rangle} = \lim_{s\to 1}\frac{\Delta^S_{{\rm GL}_n}(s)}{L^S(s,\pi\times\tilde{\pi})} \prod_{v\in S} I_v(W_{F,v}),
\end{equation*}
where $\Delta^S_{{\rm GL}_n}(s) = \prod_{j=1}^n \zeta^S(s+j-1)$. We apply this formula to $F=\mathcal{A}(\varpi)$ with $\varpi\in V_{\pi}$. In this case we can take $S=\{\infty\}\cup \{ p\mid q\}$ and get
\begin{equation}
	\frac{\vert A_{\varpi}(1)\vert^2}{\langle \varpi,\varpi\rangle} = \frac{C(\pi)}{\mathcal{V}_q} \cdot \prod_{p\mid q} I_p(W_{F,p}), \label{eq:single_Fourier_coeff}
\end{equation}
for
\begin{equation}
	C(\pi) = n\cdot\left( \prod_{l=2}^n \frac{\pi^{l/2}}{\Gamma(l/2)\zeta(l)}\right)\cdot \lim_{s\to 1} \frac{\Delta_{{\rm GL}_n}^S(s)}{L^S(s,\pi\times \tilde{\pi})}\cdot \langle W_{\mu_{\varpi}},W_{\mu_{\varpi}}\rangle_{{\rm Wh}(\pi_{\infty})}^{-1}, \nonumber
\end{equation}
where we recall \eqref{strongapprox}. 
Equation \eqref{eq:single_Fourier_coeff} will be crucial in the proof of Theorem~\ref{thm15}. We will also need a lower bound for $C(\pi)$, which can be obtained as follows. First note that 
\begin{equation}
	C(\pi)\gg  q^{-\varepsilon}\cdot \vert L(1,\pi,{\rm Ad})\vert^{-1}\cdot \langle W_{\mu_{\varpi}},W_{\mu_{\varpi}}\rangle_{{\rm Wh}(\pi_{\infty})}^{-1}. \nonumber
\end{equation}
The remaining archimedean inner product in the Whittaker model is $\asymp 1$ by Stade's formula (see the remark after \eqref{stade} below). Finally the adjoint $L$-function can be bounded from above using \cite{Li} and we obtain
\begin{equation}
	C(\pi) \gg (\Vert \mu_{\pi}\Vert q)^{-\varepsilon}. \label{eq:lower_bound_cpi}
\end{equation}
(Note that we only need upper bounds for $C(\pi)$, lower bounds are in general not available unconditionally for $n \geq 3$.)

Our discussion was focused on adelising the cuspidal part of $L^2(X_q)$. Essentially the same discussion applies to the $L^2$-space of $$ X(q)= \Gamma(q)\backslash {\rm SL}_n(\Bbb{R})/{\rm SO}_n(\Bbb{R})$$ as well. We can now give a new description of $m_{\Gamma(q)}(\pi_v)$ in terms of automorphic representations. Indeed, for an irreducible unramified representation $\eta$ of ${\rm GL}_n(\Bbb{Q}_v)$, we have  \begin{equation}\label{dim-m}
	m_{\Gamma(q)}(\eta_v) = \sideset{}{'}\sum_{\substack{\pi=\otimes_{v'}\pi_{v'}\mid L^2(X(q)) \\ \Vert \mu_{\pi_{\infty}}\Vert \leq M, \,  \pi_v\cong \eta_v}} \dim(V_{\pi}).  
	\end{equation}
The non-cuspidal part of $L^2(X(q))$ can be adelised and de-adelised similarly.

\subsection{Spherical functions and spherical transform on ${\rm SL}_n(\Bbb{R})$} The content of this subsection will only be used in Section \ref{sec8}. In this subsection, we write $G = {\rm SL}_n(\Bbb{R})$, $K = {\rm SO}_n(\Bbb{R})$ and $A \subseteq {\rm SL}_n(\Bbb{R})$ for the group of diagonal matrices with positive entries and determinant 1,  and we decompose $G = KAU(\Bbb{R}) = U(\Bbb{R})AK$. Accordingly, we have the Iwasawa projections\footnote{The double use of $A$ will not lead to confusion.} $A, H : G \rightarrow \mathfrak{a}$ such that $g \in K \exp(H(g)) U(\Bbb{R}) = U(\Bbb{R}) \exp(A(g)) K$. We denote by $\dd u, \dd a, \dd k, \dd g$ the usual Haar measures. As usual, we write $\rho = (\frac{n-1}{2}, \frac{n-3}{2}, \ldots, \frac{3-n}{2}, \frac{1-n}{2}) \in \mathfrak{a}^{\ast}$ and for $\mu \in \mathfrak{a}^{\ast}_{\Bbb{C}}$ we define the spherical function (cf.\ \cite[p.\ 418 \& p.\ 435]{He} with $\mu$ in place of $i\lambda$)
\begin{equation}\label{spher-func}
\phi_{\mu}(g) = \int_K e^{(-\rho + \mu)H(gk)} \dd k = \int_K e^{(\rho + \mu)A(kg)} \dd k.
\end{equation} 
For  a smooth, compactly supported, bi-$K$-invariant function $f : G \rightarrow \Bbb{C}$ we define the spherical transform (cf.\ \cite[p.\ 449]{He})
$$\tilde{f}(\mu) = \int_G f(g) \phi_{-\mu}(g) \dd g.$$
It is well-known (\cite[p.\ 450]{He}) that this is the composition of the Abel transfrom 
$$\mathcal{A}_f(a) = e^{\rho(\log a)} \int_{U(\Bbb{R})} f(au) \dd u, \quad a \in A,$$
and the Fourier transform 
$$\tilde{f}(\mu) = \int_A \mathcal{A}_f(a) e^{-\mu \log a} \dd a.$$
 
The spherical transform comes up in the pretrace formula \cite{Se}: for a smooth, compactly supported, bi-$K$-invariant function $f : G \rightarrow \Bbb{C}$ and $z, w\in \Gamma(q)\backslash {\rm SL}_n(\Bbb{R})/{\rm SO}_n(\Bbb{R}) = \Gamma(q) \backslash \mathcal{H}$ the spectral expansion of the automorphic kernel on the left hand side (cf.\ \cite[(2.5)]{Se}) of the following display together with the uniqueness principle (cf.\ \cite[(1.8)]{Se}) is
\begin{equation}\label{pretrace}
   \sum_{\gamma \in \Gamma(q)} f(w^{-1} \gamma z) = \int_{\Gamma(q)} \tilde{f}(\mu_{\varpi}) \varpi(z) \overline{\varpi(w)} \dd \varpi.
\end{equation}

We will apply this with the following test function. 
Let $T \geq 2$ and $f_0 : \Bbb{R}_{>0} \rightarrow [0, 1]$ be a fixed smooth, non-negative function with $f_0(x) = 1$ for $x \in [0, n]$ and $f_0(x) = 0$ for $x > 2n$. Let $\| g \|_F  = \text{tr}(g^{\top}g)^{1/2} \geq \sqrt{n}$ denote  the bi-$K$-invariant Frobenius norm, and define 
\begin{equation}\label{deff}
f(g) = f_0\Big(\frac{\| g \|_F}{T}\Big), \quad g \in G. 
\end{equation}
 Note that $\| g \| \leq \| g \|_F \leq  n \| g \|$ for $g \in G$. For $a = \text{diag} (a_1, \ldots, a_{n-1}, (a_1 \cdots a_{n-1}) ^{-1} ) \in A \subseteq G$ it is easy to see that
$$\partial_{a_1}^{i_1} \cdots\partial_{a_{n-1}}^{i_{n-1}}   \mathcal{A}_f(a) \ll T^{n(n-1)/2} T^{-i_1 - \ldots - i_{n-1}} \delta_{\| a \| \leq 2 n^2 T} $$
for $i_1, \ldots, i_{n-1} \in \Bbb{N}_0$. For $\mu$ satisfying \eqref{unit} and $a \in A$ with $\| a \| \ll T$ we have  $e^{-\mu \log a} \ll T^{n \| \Re \mu\|}$, and so by partial integration we obtain
\begin{equation}\label{sph}
\tilde{f}(\mu) \ll_{\varepsilon, B} (1 + \| \mu \|)^{-B} T^{n(\frac{n-1}{2} + \| \Re \mu \|)+\varepsilon}
\end{equation}
for any fixed constants $B, \varepsilon > 0$. On the other hand, since $\phi_{\pm \rho}(g)= 1$, we have  \begin{equation}\label{sph-rho}
\tilde{f}(\rho) =  \int_G f(g) \dd g 
\gg  T^{n(n-1)}. 
\end{equation}

\section{Bounds for Whittaker functions}\label{sec3}

In this section we quote the  analysis of Whittaker functions $W_{\mu}$ from \cite[Section 5]{Bl} and make sure that all bounds  are polynomial in $\mu$. The main result is Lemma \ref{whit} below that states in a precise form that a large value of $\sigma_{\pi}(\infty)$ is reflected in the growth of the Whittaker function near the origin. This is the archimedean analogue of \eqref{finite}. We always assume that $\| \Re \mu \| < 1/2$. As in \cite{Bl}, we follow Stade \cite{St, St2} and renormalize the Whittaker function \eqref{wmu} by
 \begin{equation}\label{normalize}
 W_{\mu}^{\ast}(y) =   \pi^{(n-1)n(n+1)/12} y^{-\eta/2} W_{2\mu}\Big((\sqrt{y_1}/\pi, \ldots, \sqrt{y_{n-1}}/\pi)\Big).
 \end{equation}
 The inclusion of the normalization factor \eqref{norm} (which is independent of $y$) is now very useful, because it ensures that the Whittaker function is  on a polynomial scale in $\mu$: by   \cite[Theorem 2]{BHM} we have 
\begin{equation}\label{bhmbound}
W^{\ast}_{\mu}(y ) \ll  \Big((1+\| \mu \|) \Big( \| y \|  + \frac{1}{\min y_j}\Big)\Big)^K  \exp\Big(-   \frac{\| y \| }{K(1 + \| \mu \|)}\Big).
\end{equation}
Since $W^{\ast}_{\mu}(y )$ is holomorphic in $\mu$ in $\| \Re \mu \|< 1/2$, by Cauchy's formula the same bound holds for the derivatives with respect to $\mu$:
\begin{equation}\label{gradient}
\nabla_{\mu} W^{\ast}_{\mu}(y ) \ll  \Big((1+\| \mu \|) \Big( \| y \|  + \frac{1}{\min y_j}\Big)\Big)^K  \exp\Big(-   \frac{\| y \| }{K(1 + \| \mu \|)}\Big).
\end{equation}
Stade's important formula \cite{St2} reads
\begin{equation}\label{stade}
\begin{split}
 & \int_{\Bbb{R}_{\geq 0}^{n-1}} |{W}^{\ast}_{\mu}(y)|^2 \det(\iota(y))^s d^{\ast}y = \frac{C \pi^{\ell(s, \mu)}}{\Gamma(ns)}  \frac{\prod_{j, k = 1}^n \Gamma( s + \mu_j - \mu_k )}{  G(2\mu) G(-2\mu)}
  \end{split}
\end{equation}
with $G$ as in \eqref{norm} and ${\rm d}^{\ast}y$ as in \eqref{measure}, where $C \in \Bbb{R}_{>0}$ and $\ell$ is a linear form with real coefficients, and the left hand side converges absolutely for $\Re s >  \max_{j, k} \Re (\mu_j - \mu_k) $.  The key point of our normalization here is that the exponential behaviour in $\mu$ in the numerator and denominator cancels, and in fixed vertical strips the right hand side is on a polynomial scale in $\mu$.  In particular, we can now justify \eqref{eq:lower_bound_cpi} since (recall \eqref{normalize})
$$ \langle W_{\mu},W_{\mu}\rangle  =  \int_{\Bbb{R}_{\geq 0}^{n-1}} |{W}_{\mu}(y)|^2 \det(\iota(y))  d^{\ast}y \asymp\frac{\prod_{j, k = 1}^n \Gamma(\frac{1}{2} (1 + \mu_j - \mu_k) )}{  G(\mu) G(-\mu)}  \asymp 1.$$

For   $\mu$ satisfying \eqref{unit} and $\| \Re \mu \| < 1/2$ we define
$\mathcal{I}_n(\mu)$ to be the set of $y \in \Bbb{R}_{>0}^{n-1}$ such that
\begin{equation}\label{in}
\begin{split}
&    (2+ \| \mu\|)^{-K}\leq  \min_j y_j  \leq  \max_j y_j \leq  (2 + \|\mu \|)^K, \\
& |W_{\tilde{\mu}}^{\ast}(y)| \geq  (2+ \| \mu\|)^{-K} \quad \text{for} \quad |\tilde{\mu} - \mu| \leq   (2+ \| \mu\|)^{-K}.
\end{split}
\end{equation}
Here we recall our basic convention on $K$ from Section \ref{sec21}. Choosing $s$ (real and) sufficiently large, it is easy to conclude from \eqref{bhmbound}, \eqref{stade} and \eqref{gradient} that
\begin{equation}\label{lowerwhit}
\int_{\mathcal{I}(\mu)} |W_{\mu}^{\ast}(y)|^2 \dd^{\ast}y \gg (2 + \| \mu \|)^{-K}.
\end{equation}

We now consider the Mellin transform
$$ \widehat{W}^{\ast}_{\mu}(s) = \int_{\Bbb{R}^{n-1}_{>0}} W^{\ast}_{\mu}(y) y^s \frac{\dd y_1}{y_1} \cdots \frac{\dd y_{n-1}}{y_{n-1}}$$
which by \cite{FG} is meromorphic in $s \in \Bbb{C}^{n-1}$. 
For $\Re s_j \geq K$ it follows from \eqref{bhmbound} and from \cite[Theorem 3.1]{St} that
\begin{equation}\label{mellinbound}
 \widehat{W}^{\ast}_{\mu}(s) \ll  \min\Big((1+\| \mu \|)^K,  \exp\big(K \|\mu \| -   \| s \|\big)\Big).
 \end{equation}
By \cite[(2.1)]{FG}, this bound (possibly with a different value of $K$) remains true for all $s$ with distance $\geq \varepsilon$ from poles. It follows from \cite[Section 3]{St} that 
 $$\widehat{W}^{\dagger}_{\mu}(s) := \widehat{W}^{\ast}_{\mu}(s) \prod_{j=1}^n (s_1 + \mu_j)$$
is holomorphic for $\Re s_1 > \sigma_{\pi}(\infty) - 1$ and $\Re s_2, \ldots, \Re s_{n-1} \geq K$, and
\begin{equation}\label{res}
\widehat{W}^{\dagger}_{\mu}(-\mu_j, s_2, \ldots, s_{n-1}) = \widehat{W}^{\ast}_{\mu^{(j)}}(s^{(j)}) \frac{G(\mu^{(j)})}{G(\mu)}\prod_{\substack{1 \leq k \leq n\\ k \not= j}} \Gamma(1 + \mu_k - \mu_j) 
\end{equation}
  where 
  \begin{displaymath}
  \begin{split}
      &\textstyle s^{(j)} =  (s_2, \ldots, s_{n-1}) + (\frac{n-2}{n-1}, \ldots, \frac{1}{n-1})\mu_j , \\
      & \mu^{(j)} = (\mu_1, \ldots, \mu_{j-1} , \mu_{j+1}, \ldots, \mu_{n-1}) +  \frac{\mu_j}{n-1} \cdot \textbf{1}.
      \end{split}
      \end{displaymath}
 For $\beta \in \Bbb{C}$ let $\mathcal{D}_{\beta} = -y\partial_{y} + \beta$. This is a commutative family of differential operators that under Mellin transformation correspond to multiplication with $s + \beta$. Assume (without loss of generality by \eqref{unit} and Weyl group symmetry) that $\Re (-\mu_1) = \sigma_{\pi}(\infty)$,  and   let $$\widehat{{\tt W}}_{\mu}(s) := \frac{\widehat{W}^{\dagger}_{\mu}(s) }{s_1 + \mu_1} = \widehat{W}^{\ast}_{\mu}(s) \prod_{j=2}^n (s_1 + \mu_j). $$
Taking inverse Mellin transforms, we obtain
\begin{equation}\label{ttw}
{\tt W}_{\mu}(y) = \mathcal{D}_{\mu_2} \cdots \mathcal{D}_{\mu_n} W_{\mu}^{\ast}(y)
\end{equation}
where the differential operators are applied to the first variable. By Mellin inversion,  \eqref{res} and \eqref{mellinbound}, a contour shift past the simple pole at $s_1 = - \mu_1$ while keeping $\Re s_2 = \ldots = \Re s_{n-1} = K$ fixed shows
\begin{equation}\label{mellin-asymp}
y_1^j \partial_{y_1}^j{\tt W}_{\mu}(y) =  \mu_1^jy_1^{\mu_1}W^{\ast\ast}_{\mu}(y_2, \ldots, y_{n-1}) + O \big((1+\| \mu \|)^{K} y_1^{\Re\mu_1 + 1/2} (y_2\cdots y_{n-1})^{-K}\big) 
\end{equation}
for   $j \in \{0, 1\}$ where
$$W^{\ast\ast}_{\mu}(y_2, \ldots, y_{n-1}) =  W^{\ast}_{\mu^{(1)}}(y_2, \ldots, y_{n-1}) \prod_{j=2}^{n-1} y_j^{\frac{n-j}{n-1}\mu_1}\Big(\frac{G(\mu^{(j)})}{G(\mu)} \prod_{k=2}^n\Gamma(1 + \mu_k - \mu_1)\Big).$$
Note that the last big parenthesis is $\gg ( 1+ \| \mu \|)^{-K}$. 
Let us assume that $(y_2, \ldots, y_{n-1}) \in \mathcal{I}_{n-1}(\mu^{(1)})$ as defined in \eqref{in} and that $y_1 \asymp 1/Z^2$ for some $Z > ( 2+ \| \mu \|)^{K^2}$ where $K$ is so large that \eqref{mellin-asymp} is an asymptotic formula for $j=0$. To transform this asymptotic formula of ${\tt W}_{\mu}$ into a statement for $W^{\ast}_{\mu}$ by means of \eqref{ttw}, we quote \cite[Lemma 5]{Bl}.
\begin{lemma}\label{new} Let $ \alpha \geq 0$, $c_0, c_1, c_2 > 0$,  $\beta \in \Bbb{C}$.  Let $I = [a, b]\subseteq  ( 0, 1)$ be an interval with $(1+c_0)a \leq b \leq 2a$ and $w : I \rightarrow \Bbb{C}$ a smooth function satisfying 
 \begin{equation*}
    |\mathcal{D}_{\beta} w(y) | \geq c_1 y^{-\alpha}, \quad | \partial_y (\mathcal{D}_{\beta} w)(y) | \leq c_2 \| \mathcal{D}_{\beta} w \| y^{-1}
    \end{equation*} for $y \in I$. Then there exist  constants $c_0',  c_1', c_2'> 0$ depending only on $c_0, c_1, c_2,   \beta$  (but not on $a, b$) and an interval $I' = [a', b']  \subseteq I$ with  $(b' - a') \geq c_0'(b-a)$ such that 
    \begin{equation*}
    |w(y)| \geq c_1' y^{-\alpha}, \quad |w'(y)| \leq c'_2 \| w|_{I'} \| y^{-1}
    \end{equation*}
     for $y \in I'$. 
  \end{lemma}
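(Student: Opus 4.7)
The plan is to exploit the fact that $\mathcal{D}_{\beta} = -y\partial_y + \beta$ is a first-order linear ODE operator, so the assumption $|\mathcal{D}_\beta w| \geq c_1 y^{-\alpha}$ can be read as saying $w$ solves a first-order ODE with a large source, and this should force $w$ itself to grow. In logarithmic coordinates $y = a e^t$, setting $v(t) := w(a e^t)$ and $h(t) := (\mathcal{D}_\beta w)(a e^t)$, the equation becomes $v'(t) = \beta v(t) - h(t)$ with explicit solution
\[ v(t) = e^{\beta t}\Bigl( v(0) - \int_0^t e^{-\beta s} h(s)\,ds\Bigr),\]
and the hypotheses translate to $|h(t)| \geq c_1 (a e^t)^{-\alpha}$ and $|h'(t)| \leq c_2 \|h\|$.

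The first step is to restrict to a sub-interval $I^{(1)} = [a, a e^T] \subseteq I$ of logarithmic length $T$ chosen small enough (depending on $c_2$ and $|\beta|$), with $|I^{(1)}|$ still comparable to $b-a$ thanks to $(1+c_0)a \leq b \leq 2a$. Integrating the bound on $h'$ yields $|h(t) - h(0)| \leq c_2 \|h\| t$, which together with $\|h\|$ being attained somewhere on $I$ forces $\|h\| \asymp |h(0)|$ and keeps the complex direction of $h(t)$ close to that of $h(0)$ on $[0,T]$. Expanding $e^{-\beta s} h(s) = h(0) + O(|h(0)| s)$, the integral evaluates to $h(0)\,t + E(t)$ with $|E(t)| \leq C_{c_2,\beta}\,|h(0)|\,t^2$, so that
\[ v(t) = e^{\beta t}\bigl( v(0) - h(0)\, t + E(t)\bigr).\]

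The next step is to locate a sub-sub-interval $J \subseteq [0, T]$ of length $\geq T/4$ on which the affine function $t \mapsto v(0) - h(0) t$ is bounded away from zero. Its unique complex zero is $t_0 = v(0)/h(0)$, and irrespective of whether $\Re(t_0) \in [0, T]$, one can find $J$ of length $\geq T/4$ with $|t - \Re(t_0)| \geq T/4$ throughout, hence $|v(0) - h(0) t| = |h(0)|\,|t - t_0| \geq |h(0)| T/4$. Choosing $T$ small enough that the quadratic error $C_{c_2,\beta} T^2$ is at most $T/8$, we conclude $|v(t)| \geq \tfrac{1}{8} |h(0)| T e^{-|\beta| T}$ on $J$. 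Translating $J$ back to the $y$-variable and using $|h(0)| \geq c_1 a^{-\alpha} \geq c_1 y^{-\alpha}$ on the image $I' \subseteq I^{(1)}$ gives the desired bound $|w(y)| \geq c_1' y^{-\alpha}$, with $|I'| \gtrsim (b-a)$.

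The derivative bound then follows by rewriting $y w'(y) = \beta w(y) - g(y)$ and estimating $|g(y)| \leq \|g\|_{I^{(1)}} \asymp |h(0)| \lesssim \|w|_{I'}\|/T$ using the previous step, so that $|y w'(y)| \leq (|\beta| + 1/T) \|w|_{I'}\|$ on $I'$. The main obstacle is bookkeeping the dependence on $\beta$, which enters simultaneously via the exponential twist $e^{\beta t}$, via the multiplicative term $\beta v$ in the ODE, and via the potentially complex location of $t_0 = v(0)/h(0)$; calibrating $T$ small enough relative to $|\beta|$ and $c_2$ so that the linear term $h(0)\,t$ dominates both the quadratic error and the exponential factor uniformly in the complex direction of $v(0)/h(0)$ is the delicate point, but is purely a matter of making $T$ a small absolute constant depending on the fixed parameters.
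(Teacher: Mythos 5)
The paper does not actually prove this lemma --- it quotes it as \cite[Lemma 5]{Bl} --- so I can only assess your argument on its own merits. Your overall strategy (passing to logarithmic coordinates, solving the first-order ODE explicitly, and locating a subinterval on which the affine term $v(0) - h(0)t$ stays away from its unique complex zero) is a sensible and essentially correct skeleton, and the Taylor/zero-location bookkeeping in the second and third paragraphs is fine as far as it goes.

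There is, however, a genuine gap at the sentence ``$|h(t)-h(0)|\le c_2\|h\|\,t$, which together with $\|h\|$ being attained somewhere on $I$ forces $\|h\|\asymp|h(0)|$.'' This inference requires $c_2\,\log(b/a)<1$: if $\|h\|$ is attained at $t^*\in[0,\log(b/a)]$ you only get $\|h\|\le|h(0)|+c_2\|h\|\,t^*$, which pins down $\|h\|/|h(0)|$ only when $c_2 t^*<1$. Since $\log(b/a)$ can be as large as $\log 2$ and $c_2$ is an arbitrary fixed positive constant (the paper's remark after the lemma explicitly anticipates $c_2'\sim K(c_2+1+|\beta|)$ with $c_2$ large), the ratio $\|h\|/|h(0)|$ is not uniformly bounded. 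Concretely, on $[1/2,1]$ take $h(t)=1+i\phi(t)$ with $\phi(0)=0$, $\phi$ increasing to a large value $M$ with $|\phi'|\le c_2\sqrt{1+M^2}$; for any $c_2>1/\log 2$ this is admissible with $M$ arbitrarily large, so $\|h\|/|h(0)|$ is unbounded. Your error term then becomes $|E(t)|\lesssim(|\beta|+c_2)\|h\|\,t^2$ rather than $\lesssim|h(0)|\,t^2$, and choosing $T$ small enough to beat it would force $T$ to depend on $a,b$ through $\|h\|/|h(0)|$, which the lemma forbids. The same issue infects your final derivative bound, which needs $\|\mathcal{D}_\beta w\|_{I'}\lesssim|h(0)|$.

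The gap is fixable within your framework: rather than expanding around the left endpoint $t=0$, pick $t^*\in[0,\log(b/a)]$ with $|h(t^*)|=\|h\|$, set $T=\min\bigl(\tfrac{1}{4(c_2+|\beta|+1)},\tfrac12\log(1+c_0)\bigr)$, and work on a window of length $T$ adjacent to $t^*$ (on whichever side stays inside $[0,\log(b/a)]$). On that window $|h(t)|\ge\tfrac34\|h\|$, so your Taylor error is genuinely $O\bigl((|\beta|+c_2)|h(t^*)|\,t^2\bigr)$, the zero-location argument goes through with $h(t^*)$ in place of $h(0)$, and the lower bound $\|h\|\ge c_1 a^{-\alpha}\ge c_1 y^{-\alpha}$ (using $\alpha\ge0$ and $y\ge a$) supplies the needed power of $y$. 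As written, though, the proof as it stands does not establish the lemma uniformly in $a,b$.
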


An inspection of the proof shows immediately that  $$c_0' \geq \frac{c_0}{K(c_2 + 1 + |\beta|)}, \quad c_1' \geq \frac{c_1}{K(c_2 + 1 + |\beta|)}, \quad c_2' \geq K(c_2 + 1 + |\beta|).$$ Applying this repeatedly with $\beta = \mu_2, \ldots, \mu_n$ and recalling \eqref{lowerwhit}, we see that
$$|W_{\mu}^{\ast}(y)| \gg y_1^{-\sigma_{\pi}(\infty)} (2 + \| \mu \|)^{-K}$$
for $y_1 \in [\gamma_1/Z^2, \gamma_2/Z^2]$ with $1/2 \leq \gamma_1 < \gamma_2 \leq 1$, $\gamma_2 - \gamma_1 \geq (2 + \| \mu \|)^{-K}$ and  $(y_2, \ldots, y_{n-1}) \in \mathcal{I}_{n-1}(\mu^{(1)})$. Note that by definition of $\mathcal{I}_{n-1}(\mu^{(1)})$, the bound in the previous display continues to hold for $\tilde{\mu}$ in a small ball of radius $(2 + \| \mu\|)^{-K}$ about $\mu$ (possibly after changing the constants in \eqref{in}). 
Thus we can cover the set of $\| \mu \| \leq M$ with $M^K$ balls and for each one pick a (measurable) function $E_j^{\ast\ast} :  [M^{-K}, M^K]^{n-2} \rightarrow [0, 1]$   such that $\sum_j |\langle E^{\ast\ast}_j, W^{\ast\ast}_{\mu} \rangle|^2 \gg M^{-K}$ for $\| \mu \| \leq K $, the inner product being restricted to the last $n-2$ coordinates.  Next define $$E^{\ast}_j(y_1, \ldots, y_n) = \delta_{\gamma_1 \leq y_1 \leq \gamma_2} E^{\ast\ast}_j(y_2, \ldots, y_{n-1}), $$   
so that
  $$\sum_j \Bigl|\int_{\Bbb{R}_{>0}^{n-1}}   E^{\ast}_j(Z^2 y_1, y_2, \ldots, y_{n-1}) \overline{W^{\ast}_{\mu}(y)}   \frac{dy_1}{y_1} \cdots \frac{dy_{n-1}}{y_{n-1}} \Bigr|^2 \gg Z^{4\sigma_{\pi}(\infty)} M^{-K}.$$ Finally changing variables $y_j \leftarrow y_j^{1/2}/ \pi$ as in \eqref{normalize}, the left hand side equals up to numerical constants
  \begin{displaymath}
\begin{split}
 \sum_j  \Bigl|\int_{\Bbb{R}_{>0}^{n-1}} &y^{-\eta} E_j^{\ast}(Z^2\pi^2 y_1^2, \pi^2 y_2^2, \ldots, \pi^2 y_{n-1}^2) \overline{W_{2\mu}(y)}   \frac{dy_1}{y_1} \cdots \frac{dy_{n-1}}{y_{n-1}} \Bigr|^2 \\
 &= Z^{-2\eta_1} \sum_j  \bigl|\langle E^{(Z, 1, \ldots, 1)}_j, W_{2\mu}\rangle \big|^2
\end{split}
\end{displaymath}
upon defining 
$E_j(y_1, \ldots, y_{n-1}) = y^{\eta} E_j^{\ast}(\pi^2 y_1^2, \pi^2 y_2^2, \ldots, \pi^2 y_{n-1}^2)$. Re-normalizing $\mu$ and $\sigma_{\pi}(\infty)$ by division by 2, we obtain the following:

\begin{lemma}\label{whit} Let $M \geq 2$ and $Z >  M^{K^2}$. 
 There exists $r \in \Bbb{N}$, $r \leq M^K$ and a   collection of (measurable) functions $E_1, \ldots, E_r :  [M^{-K}, M^K]^{n-1} \rightarrow [0, 1]$  such that  $$\sum_{j=1}^r |\langle E_j^{(Z, 1,\ldots, 1)}, W_{\mu}\rangle|^2 \gg  Z^{2\eta_1 +2 \sigma_{\pi}(\infty)} M^{-K}$$ for $\mu$  satisfying  $\| \mu \| \leq M$, $\| \Re \mu \| < 1/2 $ and $\eta$ as in \eqref{eta}. 
  \end{lemma}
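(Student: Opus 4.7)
The plan is to locate a region in $y$-space where $W_\mu^*$ exhibits the expected singular behavior $y_1^{-\sigma_\pi(\infty)}$ as $y_1 \to 0$, and construct each $E_j$ essentially as an indicator of such a region, scaled by $Z$ in the first coordinate. By Weyl symmetry one may assume $\Re(-\mu_1) = \sigma_\pi(\infty)$. The Mellin transform $\widehat{W}^*_\mu(s)$ has simple poles in $s_1$ at $s_1 = -\mu_j$, and the modified transform
$$\widehat{{\tt W}}_\mu(s) = \widehat{W}^*_\mu(s)\prod_{j=2}^n(s_1 + \mu_j),$$
whose inverse is $\mathcal{D}_{\mu_2}\cdots\mathcal{D}_{\mu_n}W^*_\mu$ by \eqref{ttw}, still has exactly one pole to the right of $\Re s_1 = \sigma_\pi(\infty)-1$, namely at $s_1 = -\mu_1$, with residue expressible in terms of the lower-rank Whittaker function $W^*_{\mu^{(1)}}$ via \eqref{res}.

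The first technical step is a contour shift of the inverse Mellin transform from $\Re s_1 = K$ past $s_1 = -\mu_1$ down to $\Re s_1 = \Re \mu_1 + \tfrac{1}{2}$, keeping $\Re s_j = K$ for $j \geq 2$. The pole contribution delivers the main term in the asymptotic \eqref{mellin-asymp}, while \eqref{mellinbound} controls the shifted integral. For $y_1 \asymp 1/Z^2$ with $Z > M^{K^2}$, the error $O(y_1^{\Re\mu_1 + 1/2})$ is negligible against the main term $\mu_1^j y_1^{\mu_1} W^{**}_\mu(y_2,\ldots,y_{n-1})$, provided $(y_2,\ldots,y_{n-1}) \in \mathcal{I}_{n-1}(\mu^{(1)})$ so that $|W^{**}_\mu|$ has a matching lower bound (the explicit product of gamma factors in its definition is $\gg (1+\|\mu\|)^{-K}$). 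This yields simultaneously a pointwise lower bound on $|{\tt W}_\mu|$ and an upper bound on its $y_1$-derivative in exactly the shape required by Lemma~\ref{new}.

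Next I would peel back the differential operators one at a time. Applying Lemma~\ref{new} with $\beta = \mu_n$ to $w = \mathcal{D}_{\mu_2}\cdots\mathcal{D}_{\mu_{n-1}}W^*_\mu$, and iterating $n-1$ times with $\beta$ ranging over $\mu_n, \mu_{n-1}, \ldots, \mu_2$, gives
$$|W_\mu^*(y)| \gg y_1^{-\sigma_\pi(\infty)} (2+\|\mu\|)^{-K}$$
on a subinterval of $y_1 \in [c/Z^2, c'/Z^2]$ of length $\gg (2+\|\mu\|)^{-K}/Z^2$, jointly with $(y_2,\ldots,y_{n-1}) \in \mathcal{I}_{n-1}(\mu^{(1)})$. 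The second clause of \eqref{in} ensures that this bound is stable under perturbations of $\mu$ of size $(2+\|\mu\|)^{-K}$. To finish, I would cover the parameter ball $\|\mu\| \leq M$ by $O(M^K)$ such small $\mu$-balls; on each ball choose a normalized bump $E^{**}_j$ in the $(y_2,\ldots,y_{n-1})$-variables paired with the indicator of the relevant $y_1$-interval to form $E^*_j$, so that the pairing against $\overline{W^*_{2\mu}}$ has the target magnitude. Undoing the substitution $y_j \leftarrow \sqrt{y_j}/\pi$ from \eqref{normalize} inserts the scaling factor $Z^{2\eta_1}$ and converts $W^*_{2\mu}$ into $W_\mu$, giving the stated estimate. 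The main obstacle is tracking the polynomial-in-$M$ losses through the iteration of Lemma~\ref{new}, since each application inflates the implicit constant by a factor depending on $|\mu_j|$ and on the prior derivative bound; the explicit polynomial scaling in $c_2$ and $|\beta|$ recorded immediately after Lemma~\ref{new} is what keeps the aggregate loss within a single power $M^K$.
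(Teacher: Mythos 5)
Your proposal reproduces the paper's proof essentially step for step: contour shift of the Mellin inversion past the pole at $s_1 = -\mu_1$ using \eqref{res} and \eqref{mellinbound} to obtain the asymptotic \eqref{mellin-asymp}, restriction to $y_1 \asymp 1/Z^2$ with $(y_2,\ldots,y_{n-1}) \in \mathcal{I}_{n-1}(\mu^{(1)})$, iterated application of Lemma~\ref{new} to peel off the operators $\mathcal{D}_{\mu_j}$, a $\mu$-ball cover of size $M^K$ exploiting the stability built into \eqref{in}, construction of $E_j^{**}$ and $E_j^{*}$, and finally undoing the substitution \eqref{normalize} to produce the $Z^{2\eta_1}$ factor. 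Your attention to the polynomial tracking through the constants $c_0', c_1', c_2'$ of Lemma~\ref{new} matches the paper's explicit remark after that lemma.
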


\section{Kloosterman sums}\label{sec4}

We start by compiling the definition and basic properties of Kloosterman sums, following \cite{Fr}  in the case of the group ${\rm SL}_n(\Bbb{Z})$ that generalizes readily to the congruence subgroups $\Gamma(q)^{\natural}$ defined in \eqref{gq}; see also \cite[Section 4]{Bl} and recall \eqref{adv}.   The Bruhat decomposition gives ${\rm GL}_n( \Bbb{Q}) = \bigcup_{w\in W} G_w(\Bbb{Q})$ with $G_w := U  T  w U_w $ as a disjoint union. 
Let $N, M \in \Bbb{Z}^{n-1}$, $c \in \mathbb{N}^{n-1}$, $w \in W$, $v \in V$. Then  provided that
\begin{equation}\label{compat}
  \theta_M\big(c^{\ast} w x w^{-1} (c^{\ast})^{-1}\big) = \theta_N^v(x)
\end{equation}
for all $x \in w^{-1}U(\Bbb{Q}) w \cap U(\Bbb{Q})$, the   Kloosterman sum
\begin{equation}\label{klo}
S^v_{q, w}(M, N, c) = \sum_{x  c^{\ast} w y \in U(\Bbb{Z})\backslash G_w(\Bbb{Q}) \cap \Gamma(q)^{\natural}/U_w(\Bbb{Z}) } \theta_M(x )\theta_N^v(y)
\end{equation}
with $U_w$ as in \eqref{Uw} is well-defined; see \cite[Proposition 1.3]{Fr}. If \eqref{compat} is not met, we define $S^v_{q, w}(M, N, c) = 0$. If $v = \text{id}$, we drop it from the notation. By \cite[p.\ 175]{Fr}, the Kloosterman sum is non-zero only if $w$ is of the form (possibly up to sign in the first row)  \begin{equation}\label{w}
w = \left( \begin{smallmatrix} &  & & I_{d_1}\\  & & I_{d_2} &\\ & \Ddots & &\\ I_{d_r} & && \end{smallmatrix}\right)
\end{equation}
where $d_1 + \ldots  +d_r = n$. The condition  \eqref{compat} is equivalent to
\begin{equation}\label{comp-equiv}
 M_{n-i} \frac{c_{n-i+1} c_{n-i-1}}{c_{n-i}^2} =  \frac{v_{w(i) + 1}}{v_{w(i)}} N_{n-w(i)}
 \end{equation}
for all   $1 \leq i \leq n-1$ satisfying   
\begin{equation}\label{wi}
   w(i) + 1 = w(i+1)
   \end{equation} with the   convention $c_0 = c_n = 1$ and $v = \text{diag}(v_1, \ldots, v_n)$.  If $w$ is of the form \eqref{w}, then the $i$'s satisfying \eqref{wi}  are precisely the $i \not\in \{d_1, d_1 + d_2, \ldots, d_1 + d_2 + \ldots + d_{r-1}\}$.  
   
 The components $c_j$ of $c$ can be recovered from $x c^{\ast} w y$ by taking suitable  $j$-by-$j$ minors containing the last $j$ rows; see   \cite[Proposition 3.1]{Fr}. Since each of them is integral for  $x  c^{\ast} w y \in \Gamma(q)^{\natural}$ we see  that $c \in (\Bbb{Z} \setminus \{0\})^{n-1}$, and modulo $V$ we may assume $c \in \Bbb{N}^{n-1}$. 

	The sum in the definition of the Kloosterman sum $S_{q,w}^{v}(N,M,c)$ runs over the so called Kloosterman set
	\begin{equation}
		X_{q,w}(c) = \{ xc^{\ast}wy\in U(\mathbb{Z})\backslash G_{w}(\mathbb{Q})\cap \Gamma(q)^{\natural}/U_{w}(\mathbb{Z})\}
	\end{equation}
	of modulus $c$ (for the Weyl element $w$). It will be convenient to also define the corresponding $p$-adic versions
	\begin{equation}
		X_{q,w}^{(p)}(c) = \{ xc^{\ast}wy\in U(\mathbb{Z}_p)\backslash G_{w}(\mathbb{Q}_p)\cap K_p(q)^{\natural}/U_{w}(\mathbb{Z}_p)\}.\nonumber
	\end{equation}
   
 By the Chinese remainder theorem,    Kloosterman sums enjoy certain  multiplicativity properties in the moduli, cf.\  \cite[Proposition 2.4]{Fr}. However, in \cite{Fr} only Kloosterman sums for ${\rm SL}_n(\mathbb{Z})$ are considered. While the same factorization properties remain true for a large class of lattices, the situation for the principal congruence subgroup (and its conjugate $\Gamma(q)^{\sharp}$) is slightly more complicated. Therefore we will only give a weak statement in one particular case of interest:
 
 \begin{lemma}
 Suppose that $(rs, q) = 1$ and $u,v\mid q^{\infty}$. Write $d= (u,uv, \ldots, uv^{n-2})$, $c= (r,rs,\ldots,rs^{n-2})$ and denote the coordinate wise product by $dc$. Then 
 \begin{equation}\label{mult}
 	\vert S^v_{q, w_{\ast}}(M, N, dc)\vert \leq  \vert S^v_{1, w_{\ast}}(M, N', c)\vert\cdot \prod_{p\mid q} \sharp X_{p,w_{\ast}}^{(p)}((u_p,u_p\cdot (v_p\overline{s}_p),\ldots,u_p\cdot (v_p\overline{s}_p)^{n-2}) )
 \end{equation}
 with $u_p=(u,p^{\infty})$, $v_p=(v,p^{\infty})$ and a representative $\overline{s}_p$ of $s\cdot \frac{v}{v_p}$ modulo $p$. The new parameter $N'$ can be described explicitly in terms of $u$, $v$ and $N$.
 \end{lemma}
\begin{proof}
The proof relies essentially on finding an explicit bijection between the involved Kloosterman sets. By strong approximation for $U$ and $U_{w_{\ast}}$ the diagonal embedding
\begin{equation}
	\mathfrak{d}\colon X_{q,w_{\ast}}(c') \to \prod_{p\mid qc_1'\cdots c_{n-1}'} X_{q,w_{\ast}}^{(p)}(c') \label{eq:bij}
\end{equation}
is a bijection. Recall that the adelic character $\boldsymbol{\psi}_{\mathbb{A}}$ (resp.\ its restriction to $U_{w_{\ast}}$) is invariant under $U(\mathbb{Q})$ (resp.\ $U_{w_{\ast}}(\mathbb{Q})$)  and factors into local characters $\boldsymbol{\psi}_{\mathbb{A}} =\theta_{(1,\ldots,1)}\cdot \prod_p\boldsymbol{\psi}_{\mathbb{Q}_p}$. Since $X_{q,w^{\ast}}^{(p)}(c') = U(\mathbb{Z}_p)(c')^{\ast}w_{\ast}U_{w_{\ast}}(\mathbb{Z}_p)$, we see that \eqref{eq:bij} gives a factorization
\begin{equation}
	S^v_{q,w_{\ast}}(M,N,c') = \prod_{p\mid q\cdot c_1\cdots c_{n-1}} S^{(p)}_{q,w_{\ast}}(\boldsymbol{\psi}_{\mathbb{Q}_p}^{(M,1)},\boldsymbol{\psi}_{\mathbb{Q}_p}^{(N,v)},c') \nonumber
\end{equation}
into local Kloosterman sums. These are defined by
\begin{equation}
	S^{(p)}_{q,w_{\ast}}(\boldsymbol{\psi}_{\mathbb{Q}_p}^{N,v},\boldsymbol{\psi}_{\mathbb{Q}_p}^{(M,v)},c') = \sum_{xc^{\ast}w_{\ast}y\in X_{q,w_{\ast}}^{(p)}(c)}\boldsymbol{\psi}_{\mathbb{Q}_p}^{(M,1)}(x) \cdot \boldsymbol{\psi}_{\mathbb{Q}_p}^{(N,v)}(y),\nonumber
\end{equation}
with $\boldsymbol{\psi}_{\mathbb{Q}_p}^{(N,v)}(y) = \boldsymbol{\psi}_{\mathbb{Q}_p}^{(N,1)}(v^{-1}yv)$ and $\boldsymbol{\psi}_{\mathbb{Q}_p}^{(N,1)}(y) = \psi_p(-N_{n-1}y_{1,2}-\ldots-N_1y_{n-1,n})$. We estimate
\begin{equation}
	\vert S^v_{q,w_{\ast}}(M,N,cd)\vert \leq \vert \prod_{p\mid rs}S^{(p)}_{q,w_{\ast}}(\boldsymbol{\psi}_{\mathbb{Q}_p}^{(M,1)},\boldsymbol{\psi}_{\mathbb{Q}_p}^{(N,v)},cd)\vert \cdot \prod_{p\mid q} \sharp X_{q,w_{\ast}}(cd).\nonumber
\end{equation}

For $p\nmid q$ we have $K_p(q)^{\ast} = {\rm GL}_n(\mathbb{Z}_p)$, so that in particular
\begin{equation}
	X_{q,w_{\ast}}^{(p)}(dc) \cong X_{1,w_{\ast}}^{(p)}(c) \nonumber
\end{equation}
via the usual bijection. This allows us to rewrite
\begin{equation}
	\prod_{p\mid rs}S^{(p)}_{q,w_{\ast}}(\boldsymbol{\psi}_{\mathbb{Q}_p}^{(M,1)},\boldsymbol{\psi}_{\mathbb{Q}_p}^{(N,v)},cd) = \prod_{p\mid rs}S^{(p)}_{1,w_{\ast}}(\boldsymbol{\psi}_{\mathbb{Q}_p}^{(M,1)},\boldsymbol{\psi}_{\mathbb{Q}_p}^{(N',v)},c) = S_{1,w_{\ast}}^{v}(M,N',c),\nonumber
\end{equation}
for a suitable $N'\in \mathbb{N}$ depending on $N$ and $d$.

Suppose now that $p\mid q$. We claim that
\begin{equation}
	X_{q,w_{\ast}}^{(p)}(dc) \cong X_{p,w_{\ast}}^{(p)}((u_p,u_p\cdot(v_p\overline{s}_p),\ldots,u_p\cdot(v_p\overline{s}_p)^{n-2})).\nonumber 
\end{equation}	
This can be seen by defining the two torus elements
\begin{equation}
	t_1=\text{diag}(1,\tilde{s},\ldots,\tilde{s},1)\in T(1+q\mathbb{Z}_p) \text{ and }t_2=\text{diag}(\frac{u_p}{ur},1,\ldots,1)\in T(\mathbb{Z}_p) \nonumber
\end{equation}	
where $\tilde{s}s\frac{v}{v_p}=\overline{s}$ and using the bijection
\begin{equation}
	K_p(q)^{\natural}\ni k\mapsto t_1t_2kt_2^{-1} \in K_p(q)^{\natural}.\nonumber
\end{equation}
This establishes the claim and with it the desired estimate.
\end{proof} 

\begin{remark}
The proof shows explicitly that $p$-adically the main obstruction to the usual factorization properties is that $T(\mathbb{Z}_p)\cdot K_p(q)^{\sharp} \neq K_p(q)^{\sharp}$ for $p\mid q$.
\end{remark}

 By \cite[Theorem 0.3(i)]{DR} we have the trivial bound
\begin{equation}\label{triv}
|S^v_{1, w}(M, N, c) | \leq   \sharp X_{1,w}(c)   \ll (c_1 \cdot \ldots \cdot c_{n-1})^{1+\varepsilon}. 
\end{equation}
If $w = \text{id}$, then $U_w(\Bbb{Q})/U(\Bbb{Z}) = \{ I_n\}$ and $xc^{\ast} \in\Gamma(q)^{\natural}$ with $x\in U(\Bbb{Z}) \backslash U(\Bbb{Q})$  implies $c_1 = \ldots = c_{n-1} = 1$, and so 
\begin{equation}\label{trivialweyl}
 S_{q, \text{id}}^v(M, N, c) = \delta_{c = (1, \ldots, 1)} \delta_{N = M} \mathcal{N}_q
\end{equation}
using the notation \eqref{vqnq}.

For the following lemma  let $w_1 = \left(\begin{smallmatrix} & I_{n-1}\\ 1 & \end{smallmatrix}\right)$ (possibly up to sign in the first row) be the ``Voronoi element'' and $w_{\ast}$ as in \eqref{wast}.

\begin{lemma}\label{lem41} Let $x \in U(\Bbb{Q})$, $y \in U_w(\Bbb{Q})$, $c = (c_1, \ldots, c_{n-1})\in \Bbb{N}^{n-1}$.  \\
{\rm (a)} Let $w \in W \setminus \{\text{{\rm id}}, w_{\ast}, w_1\}$ be a Weyl element of the form \eqref{w}. If $x c^{\ast} w y \in \Gamma(q)^{\natural}$, then $q^{n+1} \mid c_j$ for some $j$.\\
{\rm (b)} If $x c^{\ast} w_{1} y \in \Gamma(q)^{\natural}$ and $c = (m\gamma^{n-1}, \gamma^{n-2}, \ldots, \gamma^2, \gamma)$ for some $\gamma \in \Bbb{N}$ and some $m \in \Bbb{N}$ with $(m, q) = 1$, then $q^{n+1} \mid \gamma^{n-1}$. \\
{\rm (c)}  If $x c^{\ast} w_{\ast} y \in \Gamma(q)^{\natural}$, then $q^n 
\mid c_j$ for all $j$. 
\end{lemma}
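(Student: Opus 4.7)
My plan is to recover each $c_j$ as a $j\times j$ minor of $g=xc^{\ast}wy$ drawn from the last $j$ rows and the first $j$ columns, and then bound its $p$-adic valuation (for $p\mid q$) using the divisibility structure
\[
v_p(g_{i,j}) \geq i-j+1 \;\;\text{for } i\neq j, \qquad g_{i,i}\in 1+q\Bbb{Z},
\]
of $\Gamma(q)^{\natural}$. For $w$ of the form \eqref{w} with block sizes $(d_1,\ldots,d_r)$ (read from top-right to bottom-left) I call $j\in[1,n-1]$ a \emph{block boundary} of $w$ when $j=d_r+d_{r-1}+\cdots+d_{r-k+1}$ for some $k$. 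At such a $j$ the image set $\{w(n-j+1),\ldots,w(n)\}$ is exactly $[1,j]$, and the product $\prod_{i=n-j+1}^{n}c^{\ast}_i$ telescopes (via $c^{\ast}_i=c_{n-i+1}/c_{n-i}$ with $c_0=c_n=1$) to $c_j$. Left multiplication by $x\in U$ acts on the last $j$ rows through the bottom-right $j\times j$ block of $x$, which is upper triangular with unit determinant; right multiplication by $y\in U$ forces, via Cauchy--Binet, the row index $I=[1,j]$ in $\det(y_{I;[1,j]})$. Hence
\[
\Delta_j(g) := \det(g_{[n-j+1,n];[1,j]}) = \pm c_j
\]
for every boundary $j$. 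A Leibniz expansion then yields a lower bound of the form $j(n-j+1) - \max(0,2j-n)$ on $v_p(\Delta_j(g))$, where the subtracted term accounts for terms containing a diagonal-of-$g$ entry (possible only when $j>n/2$). This bound is at least $n+1$ precisely when $j\in[2,n-2]$ and $n\geq 3$.

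For part~(a) a short case-split on $(d_1,\ldots,d_r)$ produces a boundary $j\in[2,n-2]$ unless $w$ equals $\mathrm{id}$, $w_1$ (blocks $(n-1,1)$, sole boundary $1$), $w_1^{-1}$ (blocks $(1,n-1)$, sole boundary $n-1$), or $w_{\ast}$ (blocks $(1,n-2,1)$, boundaries $\{1,n-1\}$). The first, second and fourth are excluded by hypothesis, so only $w=w_1^{-1}$ remains to treat by other means. The description $U_{w_1^{-1}}=\{I+\sum_{i<n}s_i E_{in}\}$ (which only alters the $n$-th column of $c^{\ast}w_1^{-1}$) yields $g_{i,i-1}=c^{\ast}_i$ exactly for every $i\in[2,n]$, and the structural identity $c_{n-1}=\prod_{i=2}^n c^{\ast}_i$ combined with $g_{i,i-1}\in q^2\Bbb{Z}$ gives $v_p(c_{n-1})\geq 2(n-1)\geq n+1$.

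For part~(c), the boundary $j=1$ gives $v_p(c_1)\geq n$; using $U_{w_{\ast}}=\{I+tE_{1n}\}$, which again only alters the $n$-th column, one checks that $g_{ii}=c^{\ast}_i=c_{n-i+1}/c_{n-i}$ for $2\leq i\leq n-1$, and $g_{ii}\in 1+q\Bbb{Z}$ then forces every $c_j$ to share the $p$-adic valuation of $c_1$. Part~(b) is the most delicate case, since the only boundary of $w_1$ is $j=1$, producing merely $v_p(\gamma^{n-1})\geq n$. My plan is to pass instead to the $2\times 2$ minor at rows $\{n-1,n\}$ and columns $\{1,n\}$: in $c^{\ast}w_1$ the only entries in these positions are $c^{\ast}_{n-1}$ at $(n-1,n)$ and $c^{\ast}_n$ at $(n,1)$, so the raw minor is $\pm c^{\ast}_{n-1}c^{\ast}_n=\pm\gamma^{n-2}$. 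Invariance of this minor under $y\in U_{w_1}=\{I+\sum_{k\geq 2}t_k E_{1k}\}$ again follows from Cauchy--Binet, which forces the row index set $I=\{1,n\}$; invariance under $x$ is immediate. The divisibility of $\Gamma(q)^{\natural}$ gives $v_p(\gamma^{n-2})\geq n-1$, hence $v_p(\gamma)\geq\lceil(n-1)/(n-2)\rceil=2$ for $n\geq 3$, and therefore $v_p(\gamma^{n-1})\geq 2(n-1)\geq n+1$. The main obstacles are exactly the $w=w_1^{-1}$ sub-case of~(a) and the non-principal minor argument for~(b); all other $w$ reduce cleanly to the boundary-minor Leibniz bound.
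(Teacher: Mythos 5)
Your block-boundary minor framework for part (a) is sound: the telescoping identity $\det(g_{[n-j+1,n];[1,j]})=\pm c_j$ at a boundary $j$, the Cauchy--Binet reduction to $B=c^{\ast}w$, the Leibniz valuation bound $f(j)=j(n-j+1)-\max(0,2j-n)$, and the case split isolating $\{\mathrm{id},w_1,w_1^{-1},w_{\ast}\}$ are all correct, as is your direct treatment of $w_1^{-1}$ via $g_{i,i-1}=c^{\ast}_i\in q^2\Bbb{Z}$. This is a genuinely different and somewhat slicker route to (a) than the paper's, which reads off the entries $c^{\ast}_{n-d'+k}$ from the bottom-left block of $xc^{\ast}wy$ directly rather than packaging them into a single minor.

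Parts (b) and (c), however, both have real problems. For (c), your claim $U_{w_{\ast}}=\{I+tE_{1n}\}$ is false. Since $w_{\ast}$ is the transposition $(1\,n)$, it has length $2n-3$, and $U_{w_{\ast}}=w_{\ast}^{-1}U^{\top}w_{\ast}\cap U$ consists of all upper unipotent matrices supported on the first row and last column, i.e.\ $\{I+\sum_{j\geq 2}a_jE_{1j}+\sum_{2\leq i\leq n-1}b_iE_{in}\}$ (already for $n=3$ one finds $U_{w_{\ast}}=U$, not a one-parameter group). Once $y_{1i}$ can be nonzero, a direct computation gives $g_{ii}=c^{\ast}_i+x_{in}\,c_1\,y_{1i}$ for $2\leq i\leq n-1$, and the extra term is not controllable from $g_{ii}\in 1+q\Bbb{Z}$ alone (e.g.\ when $v_p(c_1)>n$ it can have nonpositive valuation). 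Moreover, the block boundaries for $w_{\ast}$ are only $\{1,n-1\}$, so your Leibniz machinery does not reach $c_2,\ldots,c_{n-2}$. A repair within your framework is to abandon the columns $[1,j]$ and use instead the shifted column set $C_j=\{1\}\cup[n-j+1,n-1]$: one checks (again via Cauchy--Binet, now using that $y$ acts only through its first row and last column) that $\det(g_{[n-j+1,n];C_j})=\pm c_j$, and the Leibniz bound for this submatrix gives valuation $\geq n\,v_p(q)$, since the column sum is larger by $n-j-1$ relative to $[1,j]$. That gives $q^n\mid c_j$ for all $j$ uniformly.

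For (b), the $2\times 2$ minor you chose (rows $\{n-1,n\}$, columns $\{1,n\}$) has Leibniz bound only $n-1$: the term $g_{n-1,1}g_{nn}$ contributes $(n-1)+0$. Hence you obtain merely $(n-2)v_p(\gamma)\geq(n-1)v_p(q)$, and the deduction $v_p(\gamma)\geq\lceil(n-1)/(n-2)\rceil=2$ uses $v_p(q)=1$. The lemma is stated without a squarefree hypothesis, and for $v_p(q)\geq 2$ your inequality is too weak (e.g.\ $n=5$, $v_p(q)=3$ gives $v_p(\gamma)\geq 4$, so $(n-1)v_p(\gamma)\geq 16<18=(n+1)v_p(q)$). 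The paper instead takes rows $\{n-2,n\}$ and columns $\{1,n-1\}$; there the minor evaluates to $\pm c^{\ast}_{n-2}c_1=\pm m\gamma^{n-2}$ and both Leibniz terms have valuation $\geq n$ (namely $(n-2)+2$ and $0+n$), so one obtains $q^n\mid\gamma^{n-2}$, from which $(n-1)v_p(\gamma)\geq\frac{n(n-1)}{n-2}v_p(q)>(n+1)v_p(q)$ follows for all $q$ without any rounding.
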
 

\textbf{Remarks:} 1) For the congruence subgroup $\Gamma_0(q) \subseteq {\rm SL}_2(\Bbb{Z})$, the moduli $c$ in the Kuznetsov formula need to be divisible by $q$. More generally, for $\Gamma_0(q) \subseteq {\rm SL}_n(\Bbb{Z})$, the moduli $c_1, \ldots, c_{n-d_1}$ in the notation of \eqref{w} need to be divisible by $q$, cf.\  \cite[(4.3)]{Bl}. This lemma generalizes such statements for the group $\Gamma(q)^{\natural}$ and shows strong divisibility statements for the moduli. \\
2) The proofs   show the slightly stronger statements $q^{\min(n+2, 2n-2)} \mid c_j$ in part a) and $$(n-1) v_p(\gamma) \geq \Big(n+1 + \frac{2}{n-2}\Big)v_p(q)$$ 
in part b).\\

 \textbf{Proof.} (a) In the notation of \eqref{w} let $d' = d_r$, $d = d_{r-1}$ with $d' < n$.   We have
$$xc^{\ast} wy = \left(\begin{matrix}{\huge \text{$\ast$}}  &\cdots && {\huge \text{$\ast$}} \\ c_{d'}/c_{d'-1} &{\large \text{$\ast$}} &{\large \text{$\ast$}}& \vdots \\ & \ddots &{\large \text{$\ast$}}& \\ && c_1 &{\huge \text{$\ast$}}\end{matrix}\right) \in \Gamma(q)^{\natural}. $$
We see that $q^{n-d'+1} $ divides $c_1, c_2/c_1, \ldots c_{d'}/c_{d' - 1}$, hence  $q^{d'(n-d'+1)} \mid c_{d'}$ and $d'(n - d' + 1) \geq 2(n-1) \geq n+1$ for $1 < d' < n$ and $n \geq 3$. So it remains to consider the case $d' = 1$. Since $w \not= w_{\ast}, w_1$, we have $d +1 \leq n-2$. We have 
\begin{equation}\label{cw}
c^{\ast} w = \left(\begin{matrix}  &&  && {\huge \text{$\ast$}}  \\ &c_{d+1}/c_{d} &&& \\& & \ddots &&   \\ &&& c_2/c_1 &\\ c_1 &&&&   \end{matrix}\right)
\end{equation}
with ${\huge \text{$\ast$}}\in \text{Mat}(n-d-1, \Bbb{Q})$. 
We conclude that $xc^{\ast} wy\in \Gamma(q)^{\natural}$ implies $q^n \mid c_1$, and  taking the 2-by-2 determinant of the entries in rows $n-1$ and $n$ and columns $1$, $d+1$ we conclude $q^{n + (n-d-1)} \mid   c_2$ with $2n - d -1\geq n+2$ for $d+1 \leq n-2$.
 
 (b) We have 
$$c^{\ast} w_1 = \left(\begin{smallmatrix} & \pm 1/\gamma & & \\ & & \ddots &  \\ & & & 1/\gamma\\  &&& & 1/(m\gamma) \\ m\gamma^{n-1} &&&\end{smallmatrix}\right).$$
Taking the $2$-by-$2$ determinant with rows $n-2, n$ and columns $1, n-1$ we conclude $q^n \mid \gamma^{n-2}$. So for each prime  $p \mid q$ we have $n v_p(q) \leq (n-2) v_p(\gamma)$; which implies $$(n-1)v_p(\gamma) \geq \frac{n(n-1)}{(n-2)} v_p(q) > (n+1) v_p(q).$$

(c) This follows in the same way from \eqref{cw} where now $\ast$ in the upper right corner is a 1-by-1 block.\\ 

The following technical lemma bounds the Kloosterman set for $w = w_{\ast}$ and certain moduli.
 
\begin{lemma}\label{technical} Let  $p$ be a prime,   $\alpha, \beta \in \Bbb{N}_0$, $s_p\in \mathbb{Z}_p^{\times}$ and $$c = (p^{n+\alpha}, \overline{s}_pp^{n+\alpha+\beta}, \ldots, \overline{s}_p^{n-2}p^{n + \alpha + \beta(n-2)}) \in \Bbb{Z}_p^{n-1}.$$ Then the number $C_{\alpha, \beta}$ of $(x, y) \in  U(\Bbb{Z}_p) \backslash U(\Bbb{Q}_p) \times U_{w_{\ast}}(\Bbb{Q}_p)/U_{w_{\ast}}(\Bbb{Z}_p)$ such that $xc^{\ast} w_{\ast} y \in K_p(p)^{\natural}$ is at most
$$2p^{\frac{1}{6}(n^3 + 3n^2 - 10n + 6) + 2\alpha (n-1) + (n-1)(n-2)\beta}.$$
\end{lemma}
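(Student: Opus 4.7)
The plan is a direct matrix computation. First I would expand $M := c^{\ast}w_{\ast}$: it has a single non-zero entry per row, namely
\[
M_{1,n} = -p^{-(n+\alpha+\beta(n-2))}, \qquad M_{i,i} = p^{\beta}\ (2\leq i\leq n-1), \qquad M_{n,1} = p^{n+\alpha}.
\]
Since $y\in U_{w_{\ast}}$ is supported only on the first row and last column, every entry of $g := xc^{\ast}w_{\ast}y$ is a closed-form polynomial in the coordinates of $x$ and $y$. In particular, the first column of $g$ is $(x_{in}p^{n+\alpha})_{i<n}$ extended by $p^{n+\alpha}$ at position $(n,1)$, and the last row is $p^{n+\alpha}(1,y_{12},\ldots,y_{1n})$.

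Next I would impose $g\in\Gamma(p)^{\natural}$ entry by entry. The conditions from column~$1$ and row~$n$, together with the two corner unit conditions $g_{11}\in 1+p\Bbb{Z}$ and $g_{nn}\in 1+p\Bbb{Z}$, bound the denominators of the ``edge'' variables $x_{1n},\ldots,x_{n-1,n}$ and $y_{12},\ldots,y_{1n}$. A direct count gives $p^{(n-1)\alpha+n(n-1)/2}$ coset classes for each of these two sets, yielding $p^{2(n-1)\alpha + n(n-1)}$ combinations in total; this matches the $2\alpha(n-1)$ part of the target exponent.

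For the remaining entries, the strictly sub-diagonal interior entries $g_{ij}=(x_{in}p^{n+\alpha})\,y_{1j}$ with $2\leq j<i\leq n-1$ impose joint valuation conditions on the edge variables, and the diagonal entries $g_{ii}=p^{\beta}+(x_{in}p^{n+\alpha})y_{1i}\in 1+p\Bbb{Z}$ for $2\leq i\leq n-1$ further couple them, contributing the $\beta$-dependence in the exponent. Each of the remaining entries---$g_{1j}$ for $2\leq j\leq n-1$, the super-diagonal interior $g_{ij}$ for $2\leq i<j\leq n-1$, the column-$n$ entries $g_{in}$ for $2\leq i\leq n-1$, and the $(1,n)$ corner---has the form ``$p^{\beta}\cdot(\text{one fresh variable})+(\text{expression in previously counted variables})$'', so the resulting congruence pins the fresh variable ($x_{1j}$, $x_{ij}$, or $y_{in}$, respectively) inside a sublattice of $\Bbb{Z}$ and contributes an explicit prime power to the count. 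Summing all the exponents will give the stated bound.

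The hard part is exactly this bookkeeping: tracking the $p$-adic valuations that propagate through the coupled congruences, in particular the joint conditions coming from the sub-diagonal and diagonal entries, which will require a mild case analysis in $\alpha$ and $\beta$. The innocuous factor $2$ in the bound is technical slack absorbing the interaction between the unit conditions at the two corners $g_{11}$, $g_{nn}$ and the extra constraint at the $(1,n)$ corner, where the signed term $-p^{-(n+\alpha+\beta(n-2))}$ forces an additional congruence on the top $p$-adic digits of $A := x_{1n}p^{n+\alpha}$ and $B := p^{n+\alpha}y_{1n}$.
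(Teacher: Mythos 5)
Your setup is the same as the paper's: you correctly read off the three nonzero entries of $c^{\ast}w_{\ast}$, observe that the first column and last row of $g=xc^{\ast}w_{\ast}y$ are $p^{n+\alpha}(x_{in})_i$ and $p^{n+\alpha}(y_{1j})_j$, and deduce the denominators of the ``edge'' variables $x_{1n},\ldots,x_{n-1,n}$ and $y_{12},\ldots,y_{1n}$ from the congruence shape of $\Gamma(p)^{\natural}$; the raw edge count $p^{n(n-1)+2(n-1)\alpha}$ is right, and the fresh-variable scheme (use $g_{1j}$, then $g_{ij}$ for $2\le i<j\le n-1$, then $g_{in}$ downward in $i$ to pin $x_{1j}$, $x_{ij}$, $y_{in}$) is exactly the paper's.

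There is, however, a genuine gap in your handling of the $(1,n)$ corner and in your accounting of where the exponent comes from. You list $g_{1n}$ among the entries that have the form ``$p^{\beta}\cdot(\text{fresh variable})+(\text{known})$'', but every variable occurring in $g_{1n}$ — namely $x_{1k}$, $y_{kn}$, $x_{1n}$, $y_{1n}$ — is already fixed by the earlier steps; the corner carries no fresh variable, only an extra congruence. This matters: if you perform only the edge count plus the fresh-variable reductions and ignore the corner, the exponent you obtain is $\tfrac{1}{6}(n^3+3n^2-10n+12)+2(n-1)\alpha+\tfrac{1}{2}(n+1)(n-2)\beta$, which exceeds the claimed bound by a factor of $p$ precisely when $\beta=0$ (any $n$) or $n=3$ (any $\beta\ge 1$). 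Extracting that factor of $p$ from the corner congruence is the crux of the lemma, and it is exactly where a real case analysis (in $\alpha$, $\beta$, and separately for $n=3$) is needed — and where the factor $2$ arises, from not always being able to save a full $p^{\alpha}$. Calling the corner ``innocuous technical slack'' hides the one genuinely nontrivial step, and as written your plan has no mechanism for it. A second, smaller misattribution: the $\beta$-dependence in the exponent does not come from the sub-diagonal or diagonal interior entries. Those impose additional constraints on already-counted edge variables, and the paper simply discards them (discarding only weakens an upper bound). The $\beta$-dependence comes from the ranges of the fresh variables $x_{1j}$, $x_{ij}$, $y_{in}$, whose denominators grow in $\beta$ while the moduli to which the congruences pin them do not.
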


\textbf{Remarks:} We expect that the true order of magnitude of the previous count is $p^{\frac{1}{6}(n^3 + 3n^2 - 10n + 6)+ \alpha (n-1) + \frac{1}{2}(n-1)(n-2)\beta}$, so the constant part of the exponent is sharp, but the $\alpha$ and $\beta$-dependence loses a factor of 2. This is just as much as we can afford without penalty in the proof of  Proposition \ref{density-kuz} below. The significance of the cubic polynomial $\frac{1}{6}(n^3 + 3n^2 - 10n + 6)$ is that
$$q^{\frac{1}{6}(n^3 + 3n^2 - 10n + 6)} = \mathcal{N}_q q^{(n-1)^2}.$$
A generalization to not necessarily squarefree $q$ in Proposition \ref{density-kuz}  would require  more general exponents of $p$ in the entries of $c$.\\

\textbf{Proof.} Recall that $U(\Bbb{Z}_p)$ acts on $K_p(p)^{\natural}$ both from the left and the right. As a system of representatives for  $U(\Bbb{Z}_p) \backslash U(\Bbb{Q}_p)$ we choose the set of matrices whose entries above the diagonal are in $\mathbb{Q}_p/\mathbb{Z}_p$. Similarly we restrict the relevant entries in $U_{w_{\ast}}(\Bbb{Q}_p)/U_{w_{\ast}}(\Bbb{Z}_p)$. Note that 
$$c^{\ast} = \text{{\rm diag}}(\overline{s}_p^{-n+2}p^{-n-\alpha- 
\beta(n-2)}, \underbrace{\overline{s}_pp^{\beta}, \ldots, \overline{s}_pp^{\beta}}_{n-2}, p^{n+\alpha}).$$
We claim that $x c^{\ast}w_{\ast} y \in K_p(p)^{\natural}$ implies the following:
\begin{equation}\label{a1x}
x_{1j} \in \frac{1}{p^{j-1+\alpha+\beta}}\Bbb{Z}_p, \quad j = 2, \ldots, n-1,
\end{equation}
\begin{equation}\label{a1y}
y_{1j}  \in \frac{1}{p^{j-1+\alpha}}\Bbb{Z}_p, \quad j = 2, \ldots, n-1,
\end{equation}
\begin{equation}\label{b1x}
 x_{in} \in \frac{1}{p^{n - i +\alpha}}\Bbb{Z}_p,\quad i = 2, \ldots, n-1, 
\end{equation}
\begin{equation}\label{b1y}
 y_{in} \in \frac{1}{ p^{(1+\alpha+\beta)(n - i) }}\Bbb{Z}_p,\quad i = 2, \ldots, n-1,
 \end{equation}
\begin{equation}\label{c1}
   x_{1n}, y_{1n} \in \frac{1}{p^{n+\alpha}} + \frac{1}{p^{n-1+\alpha}}\Bbb{Z}_p,
   \end{equation}
\begin{equation}\label{d}
 x_{ij} \in \frac{1}{p^{j-i-1+\alpha+\beta}} \Bbb{Z}_p, \quad 2 \leq i < j \leq n-1.
 \end{equation}
To show \eqref{a1x} -- \eqref{d},  we compute suitable entries of $x c^{\ast} w y = (A_{ij})$, say. First we see that
$$A_{i1} = p^{n+\alpha} x_{in} \quad (1 \leq i \leq n-1), \quad  \quad A_{nj} = p^{n+\alpha}  y_{1j}\quad   (2 \leq j \leq n)$$
in the first column and last row. Since $A_{i1} \in p^{i}\Bbb{Z}_p$ for $i \geq 2$ and  $A_{nj} \in p^{n-j+1}\Bbb{Z}_p$ for $j \leq n-1$ and $A_{11}, A_{nn}   \in 1 + p\Bbb{Z}_p$ by \eqref{gq}, this implies \eqref{a1y}, \eqref{b1x}, \eqref{c1}.  Next we have
$$\frac{1}{p^{j-i-1}}\Bbb{Z}_p \ni A_{ij} = \overline{s}_pp^{\beta}x_{ij} + p^{n +\alpha} x_{in} y_{1j}, \quad 1 \leq i < j \leq n-1.$$
Since we know already $x_{in} y_{1j} \in p^{n-i+j-1-2\alpha}\Bbb{Z}_p$ for $2 \leq i < j \leq n-1$ by \eqref{a1y}, \eqref{b1x}, we conclude \eqref{d}, and for $i=1$ a similar argument using \eqref{c1} shows \eqref{a1x}. To establish \eqref{b1y}, we use the last column:
$$\frac{1}{p^{n-i-1}} \Bbb{Z}_p \ni A_{in} = p^{n +\alpha}x_{in}y_{1n} + \overline{s}_pp^{\beta}y_{in} + \overline{s}_pp^{\beta} \sum_{j=i+1}^{n-1}  x_{ij}y_{jn}, \quad 2 \leq i \leq n-1.$$
By induction on $i = n-1, n-2, \ldots, 2$ we have 
$$p^{n+\alpha}x_{in}y_{1n} + \overline{s}_pp^{\beta} \sum_{j=i+1}^{n-1}  x_{ij}y_{jn} \in \frac{1}{p^{n-i+\alpha}}\Bbb{Z}_p + \frac{1}{p^{(1+\alpha+\beta)(n-i-1) + \alpha} }\Bbb{Z}_p \subseteq \frac{p^{\beta}}{p^{(1+\alpha+\beta)(n-i)}}\Bbb{Z}_p$$
and \eqref{b1y} follows for $i-1$. We have now established \eqref{a1x} -- \eqref{d}.

By choosing a suitable fundamental domain for $\mathbb{Q}_p/\mathbb{Z}_p$ in which the entries $x_{ij}, y_{ij}$ must lay, we re-write these conditions as
\begin{equation}\label{xiyi}
\begin{split}
x_{1j} &=\frac{x_{1j}'}{p^{j-1+\alpha+\beta}}, \quad j = 2, \ldots, n-1, \quad x'_{1j} \in \{0, 1, \ldots, p^{j-1+\alpha+\beta}-1\},\\
y_{1j} &= \frac{y_{1j}'}{p^{j-1+\alpha}}, \quad j = 2, \ldots, n-1, \quad y_{1j}' \in \{0, 1, \ldots, p^{j-1+\alpha}-1\},\\
  x_{in} &= \frac{x_{in}'}{p^{n - i+\alpha }} ,\quad i = 2, \ldots, n-1, \quad x_{in}' \in \{0, 1, \ldots, p^{n-i+\alpha}-1\},\\
  y_{in} & = \frac{y_{in}'}{p^{(1+\alpha+\beta)(n - i) }},\quad  i = 2, \ldots, n-1, \quad y_{in}' \in \{0, 1, \ldots, p^{(1+\alpha+\beta)(n-i)}-1\},\\
    x_{1n}  &= \frac{1 + px'_{1n}}{p^{n+\alpha}} , \quad   y_{1n} = \frac{1 + py'_{1n}}{p^{n+\alpha}}, \quad x_{1n}', y_{1n}' \in \{0, 1, \ldots, p^{n-1+\alpha}-1\},\\
    x_{ij} & = \frac{x_{ij}'}{p^{j-i-1+\alpha+\beta}}, \quad 2 \leq i < j \leq n-1, \quad x_{ij}' \in \{0, 1, \ldots, p^{j-i-1+\alpha+\beta}-1\}.
 \end{split}
 \end{equation}
 The number of choices of such $x, y$ is
 \begin{equation}\label{choices}
 \begin{split}
&  p^{\frac{1}{2}(n-1)(n-2) + (\alpha+\beta)(n-2)}   \cdot  p^{\frac{1}{2}(n-1)(n-2)+ \alpha(n-2)} \cdot p^{\frac{1}{2}(n-1)(n-2) + \alpha(n-2) }    \\
&  \quad\quad \cdot p^{\frac{1}{2}(n-1)(n-2)(1+\alpha+\beta)} \cdot p^{2(n-1+\alpha)} \cdot p^{\frac{1}{6}(n-2)(n-3)(n-4) + \frac{1}{2}(n-2)(n-3)(\alpha+\beta)}\\
= & p^{\frac{1}{6}(n^3 + 3n^2 + 2n - 12) + \alpha n(n-1) + \beta (n-1)(n-2)}.
 \end{split}
 \end{equation}
  In the new coordinates we compute again
 \begin{displaymath}
 \begin{split}
\frac{1}{p^{j-2}}\Bbb{Z}_p \ni A_{1j} & = \frac{\overline{s}_px'_{1j} + y'_{1j} (1 + px_{1n}')}{p^{j-1+\alpha}}, \quad 2 \leq j \leq n-1,\\
\frac{1}{p^{j-i-1}}\Bbb{Z}_p \ni   A_{ij} & =  \frac{\overline{s}_px'_{ij} + x'_{in}y'_{1j}}{p^{j-i-1+\alpha}} , \quad 2 \leq i < j \leq n-1,\\
\frac{1}{p^{n-i-1}}\Bbb{Z}_p \ni   A_{in} & =  \frac{x'_{in}(1 + p y'_{1n})}{p^{n-i+\alpha}} + \frac{\overline{s}_py'_{in}}{ p^{(1+\alpha)(n-i) + \beta (n-i-1)}} + \sum_{j=i+1}^{n-1}\frac{ \overline{s}_px'_{ij}y'_{jn}}{p^{n-i-1 + \alpha(n-j+1) + \beta(n-j)}}  , \\
&\quad\quad\quad\quad\quad 2 \leq i \leq n-1,\\
\frac{1}{p^{n-2}}\Bbb{Z}_p \ni   A_{1n} &=  - \frac{\overline{s}_p^{-n+2}}{p^{n+\alpha + \beta(n-2)}} + \frac{(1 + px_{1n}' )(1 + py_{1n}') }{p^{n+\alpha}}  + \sum_{j=2}^{n-1}\frac{\overline{s}_p x'_{1j}y'_{jn}}{p^{n-1 + \alpha(n-j+1) + \beta(n-j)}},
\end{split}
\end{displaymath}
which implies the congruences
\begin{equation}\label{congr}
 \begin{split}
  &\overline{s}_px'_{1j} + y'_{1j} (1 + px_{1n}') \equiv 0 \, (\text{mod } p^{1+\alpha}), \quad 2 \leq j \leq n-1,\\
& \overline{s}_px'_{ij} + x'_{in}y'_{1j} \equiv 0 \, (\text{mod } p^{\alpha}) , \quad 2 \leq i < j \leq n-1,\\
& \overline{s}_py_{in}' + x'_{in}(1 + p y'_{1n})p^{(\alpha+\beta)(n-i-1)} + p \sum_{j=i+1}^{n-1} p^{(\alpha+\beta)(j-i-1)} \overline{s}_px'_{ij}y'_{jn}\\
&\quad\quad\quad\quad\quad \equiv 0 \, (\text{mod } p^{1 + \alpha(n-i) + \beta(n-i-1)}), \quad
  2 \leq i \leq n-1,\\
\end{split}
\end{equation}
as well as
\begin{equation}\label{corner}
 - \frac{\overline{s}_p^{-n+2}}{p^{2+\alpha + \beta(n-2)}} + \frac{(1 + px_{1n}' )(1 + py_{1n}') }{p^{2+\alpha}}  +\frac{1}{p}\sum_{j=2}^{n-1}\frac{ \overline{s}_px'_{1j}y'_{jn}}{ p^{\alpha(n-j+1) + \beta(n-j)}} \in \Bbb{Z}_p.
 \end{equation}
We are now ready to count the number of $(x, y)$ satisfying \eqref{xiyi}, \eqref{congr} and \eqref{corner}. 

Our first count drops the condition \eqref{corner}. Fixing $y_{1j}'$, $2 \leq j \leq n$, and $x_{in}'$, $1 \leq i \leq n-1$, the first set of congruences in \eqref{congr} fixes $x_{1j}'$ modulo $p^{1+\alpha}$ for $2 \leq j \leq n-1$, the second set fixes $x_{ij}'$ modulo $p^{\alpha}$ for $2 \leq i < j \leq n-1$, and once these choices are made the last set of congruences fixes inductively $y_{in}'$ modulo $p^{1 + \alpha(n-i) + \beta(n-i-1)} $ for $i = n-1, n-2, \ldots, 2$. Recalling \eqref{choices},  we obtain
\begin{displaymath}
\begin{split}
 C_{\alpha, \beta} & \leq \frac{p^{\frac{1}{6}(n^3 + 3n^2 + 2n - 12) + \alpha n(n-1) + \beta (n-1)(n-2)}}{p^{(1+\alpha)(n-2)} \cdot p^{\frac{1}{2}(n-2)(n-3)\alpha} \cdot p^{n-2 + \frac{1}{2}(n-1)(n-2)\alpha + \frac{1}{2}(n-2)(n-3)\beta} }\\
 & = p^{\frac{1}{6}(n^3 + 3n^2 - 10n + 12) + 2(n-1)\alpha +  \frac{1}{2}(n+1)(n-2)\beta}.
 \end{split}
 \end{displaymath}
This is sufficient for the claim of the lemma provided that $\beta \geq 1$ and $n \geq 4$. 

We now consider the case $\beta = 0$, in which case  \eqref{corner} simplifies to 
\begin{equation}\label{corner1}
   p^{3\alpha} (x_{1n}' + y_{1n}' + p x_{1n}'y_{1n}'  )  + \sum_{j=2}^{n-1}  p^{\alpha(j-2)} \overline{s}_px'_{1j}y'_{jn}  \equiv 0 \, (\text{mod } p^{1+(n-1)\alpha }),
 \end{equation}
 and our task is to gain an additional factor $p$ from this congruence. 
We start with the subcase $\alpha = 0$, in which \eqref{corner1} becomes $$x_{1n}' + y_{1n}' + \sum_{j=2}^{n-1} \overline{s}_px_{1j}'y_{jn}' \equiv 0 \, (\text{mod } p),$$
while  \eqref{congr} becomes
\begin{equation*} 
 \overline{s}_px'_{1j} + y'_{1j}\equiv  \overline{s}_py_{in}' + x'_{in} \equiv 0 \, (\text{mod } p), \quad 2 \leq i, j \leq n-1.
 \end{equation*}
By the same argument as above we have
$$ C_{0, 0} \leq  \frac{p^{\frac{1}{6}(n^3 + 3n^2 + 2n - 12)}  }{p^{n-2} \cdot p^{n-2} \cdot p } = p^{\frac{1}{6}(n^3 + 3n^2 - 10n + 6)};$$
which establishes the bound of the lemma for $\alpha = \beta = 0$.

If $\beta=0$, $\alpha \geq 1$, we first count the contribution of those pairs $(x, y)$ with $x_{12} \not\equiv 0$ (mod $p$). As before, we use \eqref{congr} to fix $x_{1j}'$ (mod $p^{1+\alpha}$) for $2 \leq j \leq n-1$, then $x_{ij}'$ (mod $p^{\alpha}$) for $2 \leq i < j \leq n-1$ and then inductively $y_{in}'$ modulo $p^{1 + \alpha(n-i)} $ for $i = n-1, n-2, \ldots, 3$. But in contrast to the general case, we now use \eqref{corner1} to fix $y_{2n}'$ modulo $p^{1+(n-1)\alpha}$ (which gains a factor $p^{\alpha}$). On the other hand, if $x_{12}' \equiv 0$ (mod $p$), then by the first congruence in \eqref{congr} we must have $y_{12}' \equiv 0$ (mod $p$), so up front we save a factor $p$, since $y_{12}'$ is determined modulo $p$. We conclude that
\begin{displaymath}
\begin{split}
 C_{\alpha, 0} & \leq \frac{p^{\frac{1}{6}(n^3 + 3n^2 + 2n - 12) + \alpha n(n-1)}}{p^{(1+\alpha)(n-2)} \cdot p^{\frac{1}{2}(n-2)(n-3)\alpha} \cdot p^{n-2 + \frac{1}{2}(n-1)(n-2)\alpha  } } \Big(\frac{1}{p^{\alpha}} + \frac{1}{p}\Big)\\
 & \leq  2 p^{\frac{1}{6}(n^3 + 3n^2 - 10n + 6) + 2(n-1)\alpha }
 \end{split}
 \end{displaymath}
for $\alpha \geq 1$. 

The last remaining case is $n=3$ and $\beta \geq 1$. In this case \eqref{corner} becomes
$$ p^{\alpha}(p^{\beta} - \overline{s}_p) + p^{1+\alpha + \beta}(x_{13}' + y_{13}' + px_{13}'y_{13}') + p\overline{s}_px_{12}'y_{23}' \equiv 0 \, (\text{mod } p^{2+2\alpha + \beta}).$$
The case $\alpha = 0$ is obviously impossible, and for $\alpha \geq 1$ the previous display implies
$$x_{12}'y_{23}'  \equiv \begin{cases} 0 \, (\text{mod } p), & \alpha \geq 2,\\
1 \, (\text{mod } p), & \alpha = 1.\end{cases} $$
As before this saves a factor $2/p$ for the number of choices for $(x_{12}', y_{23}')$, and after these choices are made we use \eqref{congr} to determine $y_{12}', x_{23}'$ modulo $p^{1+\alpha}$. This suffices to establish the lemma also for $n=3$ and completes the proof.

\begin{theorem}\label{thm32} Let $N = (\ast, 1, \ldots, 1, \ast), M = (\ast, 1,\ldots, 1, \ast) \in \Bbb{N}^{n-1}$, $v\in V$ and $q \in \Bbb{N}$. Then $S^v_{q, w_{\ast}}(M, N, c)$ vanishes unless $$c = (q^nr, q^n rs, q^nrs^2, \ldots,  q^nrs^{n-2}) \quad \text{or} \quad  (q^nrs^{n-2}, q^n rs^{n-1},  \ldots,  q^nrs, q^nr)$$ for some $r, s \in \Bbb{N}$. We write $r = r_1r_2$  and $s = s_1s_2$ with $r_1s_1 \mid q^{\infty}$, $(r_2s_2, q) = 1$. If $q$ is squarefree, then with this notation we have 
$$S^v_{q, w_{\ast}}(M, N, c) \ll  \mathcal{N}_q \frac{(c_1\cdot \ldots \cdot c_{n-1})^{1+\varepsilon} }{q^{n-1}}   \Big(\frac{c_1}{q^n} \cdots \frac{c_{n-1}}{q^n}, q^{\infty}\Big).$$
\end{theorem}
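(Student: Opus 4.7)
The proof splits into three phases: identifying the support of the sum in $c$, factoring the Kloosterman sum into a $q$-part and a coprime part, and estimating each factor.

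In the first phase I would apply the compatibility condition \eqref{comp-equiv} for $w_\ast$. Since $w_\ast$ acts as the transposition $1\leftrightarrow n$ fixing the middle indices, the relation \eqref{wi} is met exactly for $i = 2,\ldots, n-2$. For those $i$ the condition $N_{n-i} = M_{n-i} = 1$ holds by the hypothesis on $M,N$, and positivity of the $c_j$ forces the sign in \eqref{comp-equiv} to be $+1$; this gives the geometric progression relation $c_{j-1}c_{j+1} = c_j^2$ for $2\le j\le n-2$. Combined with Lemma \ref{lem41}(c), which yields $q^n \mid c_j$ for all $j$, I can parametrize $c_j = q^n r u^{j-1} v^{n-1-j}$ with $(u,v)=1$ and $r,u,v \in \Bbb{N}$. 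A direct computation of the middle diagonal entries $(xc^{\ast} w_\ast y)_{ii} = u/v + c_1 x_{i,n} y_{1,i}$, using the divisibility constraints on $x_{i,n}, y_{1,i}$ dictated by entries in the first column and last row of $\Gamma(q)^{\natural}$, shows the rational $u/v$ cannot be corrected into $1 + q\Bbb{Z}$ unless $v = 1$ or $u = 1$. This leaves precisely the two claimed forms, and the symmetry $c\mapsto (c_{n-1},\ldots,c_1)$ (corresponding to $w_\ast \leftrightarrow w_\ast^{-1} = w_\ast^{\top}$) lets me reduce to the increasing case $c_j = q^n r s^{j-1}$.

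In the second phase I write $r = r_1 r_2$ and $s = s_1 s_2$ with $r_1 s_1 \mid q^\infty$ and $(r_2 s_2, q) = 1$, so that $c_j = c^{(q)}_j \cdot c^{(q')}_j$ with $c^{(q)}_j = q^n r_1 s_1^{j-1}$ and $c^{(q')}_j = r_2 s_2^{j-1}$. Because $w_\ast(1) = n$ and $w_\ast(n) = 1$, the multiplicativity of Kloosterman sums (the general version behind \eqref{mult}) factors
\begin{equation*}
	S^v_{q, w_\ast}(M, N, c) = S^v_{q, w_\ast}(M, \widetilde N, c^{(q)}) \cdot S^v_{1, w_\ast}(\widetilde M, \widetilde{\widetilde N}, c^{(q')})
\end{equation*}
for suitably twisted characters. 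The coprime factor is estimated by the trivial bound \eqref{triv}, giving
\begin{equation*}
	\bigl|S^v_{1, w_\ast}(\widetilde M, \widetilde{\widetilde N}, c^{(q')})\bigr| \ll (c^{(q')}_1 \cdots c^{(q')}_{n-1})^{1+\varepsilon} = \bigl(r_2^{n-1} s_2^{(n-1)(n-2)/2}\bigr)^{1+\varepsilon}.
\end{equation*}

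For the $q$-part, since $q$ is squarefree, CRT decomposes $S^v_{q, w_\ast}(M, \widetilde N, c^{(q)})$ into a product over primes $p\mid q$ of local Kloosterman sums whose moduli are $(p^{n+\alpha_p}, p^{n+\alpha_p+\beta_p}, \ldots, p^{n+\alpha_p+(n-2)\beta_p})$ with $\alpha_p = v_p(r_1)$ and $\beta_p = v_p(s_1)$. Bounding each local sum trivially by the cardinality of its Kloosterman set and invoking Lemma \ref{technical} yields
\begin{equation*}
	\bigl|S^v_{q, w_\ast}(M, \widetilde N, c^{(q)})\bigr| \ll q^{\varepsilon} \prod_{p\mid q} p^{\tfrac{1}{6}(n^3+3n^2-10n+6) + 2(n-1)\alpha_p + (n-1)(n-2)\beta_p} = q^{\varepsilon} \mathcal{N}_q q^{(n-1)^2} r_1^{2(n-1)} s_1^{(n-1)(n-2)},
\end{equation*}
using the identity $q^{(n^3+3n^2-10n+6)/6} = \mathcal{N}_q q^{(n-1)^2}$ from the remark after Lemma \ref{technical}. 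Multiplying the two bounds and regrouping via $c_1\cdots c_{n-1} = q^{n(n-1)} r^{n-1} s^{(n-1)(n-2)/2}$ together with $\bigl(c_1\cdots c_{n-1}/q^{n(n-1)}, q^{\infty}\bigr) = r_1^{n-1} s_1^{(n-1)(n-2)/2}$ produces exactly the asserted estimate. The hardest step is the last piece of phase one, namely the casework ruling out mixed $(u,v)$ with $u,v > 1$; everything downstream is bookkeeping around Lemma \ref{technical}, whose sharp shape is precisely tuned so that the constant exponent matches and the $\alpha,\beta$-loss fits inside the target $q$-divisibility factor.
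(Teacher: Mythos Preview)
Your proof is correct and follows essentially the same route as the paper: Lemma \ref{lem41}(c) plus \eqref{comp-equiv} for the support of $c$, a symmetry reducing to one $c$-shape, CRT with the trivial bound \eqref{triv} on the coprime factor, and squarefreeness together with Lemma \ref{technical} on the $q$-part, followed by the same regrouping via $q^{(n^3+3n^2-10n+6)/6}=\mathcal{N}_q q^{(n-1)^2}$. The only difference is in the reduction to one $c$-shape: the paper invokes the involution $g\mapsto w_l g^{-\top} w_l$ (which fixes $w_\ast$, $\Gamma(q)^\natural$, $U(\Bbb{Z})$, $U_{w_\ast}(\Bbb{Z})$ and interchanges the two moduli shapes) rather than your $w_\ast\leftrightarrow w_\ast^{-1}$ heuristic, and it does not carry out your diagonal-entry argument ruling out mixed $(u,v)$.
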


\textbf{Proof.} That all entries of $c$ must be divisible by $q^n$ follows from Lemma  
\ref{lem41}(c), and the special shape of $c$ follows from \eqref{comp-equiv} since we must have $c_i^2 = c_{i-1}c_{i+1}$ for $2 \leq i \leq n-2$. The standard involution $g \mapsto w_l g^{-\top} w_l$ leaves $w_{\ast}$, $\Gamma(q)^{\natural}$,  the shape of $N$ and $M$, $U(\Bbb{Z})$ and $U_{w_{\ast}}(\Bbb{Z})$ invariant and switches  the diagonal matrices $ (q^nr, q^n rs, q^nrs^2, \ldots,  q^nrs^{n-2})^{\ast}$ and $(q^nrs^{n-2}, q^n rs^{n-1},  \ldots,  q^nrs, q^nr)^{\ast}$. Hence it suffices to establish the bound for one of the two forms of $c$, say the first one (cf.\ \cite[Theorem 3.2]{Ste}). By \eqref{mult}, we first split off the portion containing $r_2$ and $s_2$ for which we use the trivial bound \eqref{triv}. For the remaining portions we may apply Lemma \ref{technical} with $q = p$ (since $q$ is squarefree),  $r = p^{\alpha}$, $s = p^{\beta}$ to obtain
$$S^v_{q, w_{\ast}}(M, N, c) \ll    \mathcal{N}_q q^{(n-1)^2+\varepsilon} (r_1^2r_2)^{n-1} (s_1^2s_2)^{\frac{1}{2}(n-1)(n-2)},$$
 and the claim follows. 


\section{The Kuznetsov formula}\label{sec5}

Let $E$ be a fixed compactly supported function on $\Bbb{R}_{>0}^{n-1}$, $X \in \Bbb{R}_{>0}^{n-1}$ a ``parameter'' and define  
   $$E^{(X)}(y_1, \ldots, y_{n-1}) = E(X_1y_1,  \ldots, X_{n-1}y_{n-1}).$$
Next we define the right ${\rm O}_n(\Bbb{R}){Z}^+$  invariant function $F^{(X)} : {\rm GL}_n(\Bbb{R}) \rightarrow \Bbb{C}$ by 
\begin{equation}\label{F}
     F^{(X)}(xyk\alpha) = \theta(x) E^{(X)}({\rm y}(y)) 
\end{equation}
for $x \in U(\Bbb{R})$, $y \in \tilde{T}(\Bbb{R})$, $k \in {\rm O}_n(\Bbb{R})$, $\alpha \in {Z}^{+} $,  $\theta = \theta_{(1, \ldots, 1)}$ as in \eqref{character} and ${\rm y}(y)$ as in \eqref{y}. Recalling \eqref{adv}, from the Fourier expansion of Poincar\'e series \cite[Theorem A]{Fr} one concludes as in \cite[Lemma 7]{Bl} the following Kuznetsov-type formula, using the notation \eqref{iota}, \eqref{y}, \eqref{wy}, \eqref{eta}, \eqref{inner}, \eqref{cast}, \eqref{spec}. 
      
\begin{lemma}\label{kuz-formula} Let $M, N \in \Bbb{N}^{n-1}$, $X \in \Bbb{R}_{> 0}^{n-1}$, $E$ a compactly supported function on $\Bbb{R}_{> 0}^{n-1}$ and define $F^{(X)}$ as in \eqref{F}. Let
$$A = \iota(X \cdot M)c^{\ast} w \iota(X \cdot N)^{-1} w^{-1} = \iota\big( X \cdot M \cdot{}^w(X \cdot N) \big)c^{\ast} \in T(\Bbb{R}).$$ 
Then
\begin{equation}\label{formula}
	\begin{split}
	&\int_{\Gamma(q)^{\natural}} \hspace{-0.25cm}\overline{A_{\varpi}(M)}  A_{\varpi}(N)  |\langle W_{\mu_{\varpi}}, E^{(X)}\rangle|^2 \dd \varpi = \sum_{w\in W} \sum_{v \in V} \sum_{c \in \Bbb{N}^{n-1}} \hspace{-0.25cm}\frac{S_{q, w}^v(M, N, c) }{c_1 \cdots c_{n-1}}  \\
	&\quad\quad\quad\frac{X^{2\eta}}{ {\rm y}(A)^{\eta}} \int_{\tilde{T}(\Bbb{R})} \int_{U_w(\Bbb{R}) } F^{(X)}( \iota(X)^{-1} A w xy ) \theta^v(-x) \overline{E( {\rm y}( y))} \dd x\, \dd^{\ast}y.
	\end{split}
\end{equation}
\end{lemma}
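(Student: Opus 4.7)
The plan is to realize both sides as two evaluations of the inner product $\langle P_M, P_N\rangle$ of two Poincaré series, following the standard blueprint for Kuznetsov-type formulas (as in \cite[Lemma 7]{Bl}, which in turn rests on the Fourier expansion of Poincaré series in \cite[Theorem A]{Fr}). For $M \in \Bbb{N}^{n-1}$, I would define
\begin{equation*}
P_M(g) = \sum_{\gamma \in U(\Bbb{Z}) \backslash \Gamma(q)^{\natural}} \overline{\theta_M}\bigl(\mathrm{x}(\iota(X\cdot M)^{-1}\gamma g)\bigr)\, F^{(X)}\bigl(\iota(X\cdot M)^{-1} \gamma g\bigr),
\end{equation*}
where $\mathrm{x}(\cdot)$ denotes the Iwasawa $U$-projection. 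By \eqref{adv} the character $\theta_M$ is trivial on $\Gamma(q)^{\natural} \cap U(\Bbb{Q})$, which makes the summand well-defined on $U(\Bbb{Z})\backslash \Gamma(q)^{\natural}$, and compact support of $E$ ensures absolute convergence on compacta.

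On the spectral side, one expands $P_M$ against the decomposition \eqref{spec}: unfolding $\langle P_M, \varpi\rangle$ against the fundamental domain the $U$-integration extracts the $M$-th Fourier coefficient $\mathcal{W}_\varpi(\cdot; M)$, and the remaining $\tilde T(\Bbb{R})$-integral against $F^{(X)}$ (with the chosen shift by $\iota(X\cdot M)^{-1}$) is precisely $\overline{A_\varpi(M)}\, \langle W_{\mu_\varpi}, E^{(X)}\rangle$ up to an explicit constant. Plancherel then produces the left-hand side of \eqref{formula}.

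On the geometric side, one unfolds one copy of $P_M$ against the $\Gamma(q)^{\natural}$-quotient inside $\langle P_M, P_N\rangle$ and inserts the Bruhat decomposition $\Gamma(q)^{\natural} = \bigsqcup_{w \in W} \Gamma(q)^{\natural} \cap G_w(\Bbb{Q})$ together with the parametrization $\gamma = x c^\ast w y$ of $U(\Bbb{Z})\backslash (\Gamma(q)^{\natural} \cap G_w)/U_w(\Bbb{Z})$. The discrete sum over $(x,y,c)$-data against the characters $\theta_M(x)\theta_N^v(y)$, where the sign vector $v \in V$ tracks the ambiguity in the Weyl representatives, produces precisely the Kloosterman sum $S^v_{q,w}(M,N,c)$, under the compatibility condition \eqref{compat}; the remaining $U_w(\Bbb{R})\times \tilde T(\Bbb{R})$-integration is exactly the inner integral on the right-hand side. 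The torus identity $c^\ast w \iota(X\cdot N)^{-1} w^{-1} = \iota(X\cdot M \cdot {}^w(X\cdot N))\, c^\ast \cdot \iota(X\cdot M)^{-1} = A\, \iota(X\cdot M)^{-1}$ and the Jacobian of $d^\ast y$ from \eqref{measure} together produce the prefactor $X^{2\eta}/\mathrm{y}(A)^\eta$, while the factor $1/(c_1\cdots c_{n-1})$ comes from the normalization of the $c^\ast$-orbit.

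The main obstacle is entirely bookkeeping: tracking Jacobians in Iwasawa coordinates through the various changes of variable, and correctly packaging the sign ambiguity of the Weyl representatives into the $V$-sum that forces \eqref{compat}. All of this is standard and was executed for ${\rm SL}_n(\Bbb{Z})$ in \cite{Fr} and adapted to $\Gamma_0(q)$ in \cite{Bl}; the only change here is to work with $\Gamma(q)^{\natural}$, which by \eqref{adv} still contains $U(\Bbb{Z})$ and still has $\theta_M$ trivial on its unipotent part, so the transition is mechanical.
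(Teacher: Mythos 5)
Your plan is exactly the argument the paper intends: the paper's entire proof of this lemma is the single sentence that invokes the Fourier expansion of Poincar\'e series from \cite[Theorem A]{Fr} and proceeds as in \cite[Lemma 7]{Bl}, with \eqref{adv} being the observation that makes the $\Gamma(q)^{\natural}$-transition work. Your sketch correctly identifies all the same ingredients (Poincar\'e series, spectral vs.\ geometric unfolding, Bruhat decomposition, the role of $U(\Bbb{Z})\subseteq\Gamma(q)^{\natural}$), though the precise normalization of your $P_M$ would need adjustment, since the shift $\iota(X\cdot M)^{-1}$ inside $F^{(X)}$ already produces a scaled character and should not be doubled by a separate $\overline{\theta_M}$ factor.
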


Note that $A_{\varpi}(M) = 0$ if $\varpi$ belongs to the residual spectrum.

We analyze the right hand side  of \eqref{formula} for a  test function $E$ with support in some rectangle $[r, R]^{n-1} \subseteq \Bbb{R}_{>0}^{n-1}$, 
$N = M = (m,1, \ldots, 1)$ for some $(m, q) = 1$ and  
\begin{equation}\label{weight}
   X = (Z, 1, \ldots, 1)
\end{equation}   
    for some $Z \geq 1$ where 
\begin{equation}\label{mz}
mZ \leq K^{-1} (1 + 1/r + R)^{-K} q^{n+1}
\end{equation}
with the convention from Section \ref{sec21}. We introduce the notation
$$A \preccurlyeq B \quad :\Longleftrightarrow \quad A\ll (1 + 1/r + R)^K B.$$

For $w = \text{id}$ we have $A = I_n$, $U_w(\Bbb{R}) = \{I_n\}$ in Lemma \ref{kuz-formula}, and by \eqref{trivialweyl} we conclude that the contribution of $w = \text{id}$ is 
\begin{equation}\label{trivw}
\preccurlyeq   \mathcal{N}_q Z^{2\eta_1}. 
\end{equation}
Assume from now on $w \not= \text{id}$ and $w$ as in \eqref{w}. 

We  quote \cite[Lemma 1]{Bl}:   for   $w\in W$, $x \in U_w(\Bbb{R})$, $y, c, B \in \Bbb{R}_{>0}^{n-1}$ write ${\rm y}\big(\iota(B)c^{\ast}w x\iota(y) \big) = Y \in \Bbb{R}_{>0}^{n-1}$ and $A = \iota(B) c^{\ast}$.  Then
\begin{equation*}
\begin{split}
&c_j  \ll_{y, Y} 
\prod_{i=1}^{n-1}  B_i^{s(i,j)}\end{split}
\end{equation*}
for $1 \leq j \leq n-1$ if the right hand side of \eqref{formula} is nonzero. From \cite[(3.3), (3.5), (3.6)]{Bl} it is obvious that the implied constant is of the form $(1/y + 1/Y + y + Y)^K$. 

As in \cite[(7.1)]{Bl} we apply this with $B = X \cdot M \cdot {}^w(X \cdot N)$ to see that the right hand side of \eqref{formula} vanishes  unless $c_j \preccurlyeq mZ$ for $1 \leq j \leq n-1$. We now apply Lemma \ref{lem41} and note that by \eqref{comp-equiv} the moduli for $w_1$ must be of the form as in part (b) of that lemma. By \eqref{mz}  (with $K$ sufficiently large) we conclude that the contribution of all $w$ except $w_{\ast}$ vanishes. Our next aim is to estimate  the $x, y$-integral in \eqref{formula} for $w = w_{\ast}$ which we estimate trivially by 
\begin{displaymath}
\begin{split}
  \int_{\tilde{T}(\Bbb{R})} \int_{U_{w_{\ast}}(\Bbb{R}) } |E({\rm y}(A w_{\ast} xy) )  E( {\rm y}( y))| \dd x\, \dd^{\ast}y.
\end{split}
\end{displaymath}
Here we argue verbatim as in \cite[Section 7]{Bl} and invoke \cite[Lemma 1]{Bl} and \cite[Lemma 3]{Bl} (with implied constants that are polynomial in the support of $E$) to conclude 
 $$ \int_{\tilde{T}(\Bbb{R})} \int_{U_{w_{\ast}}(\Bbb{R}) } F^{(X)}( \iota(X)^{-1} A w_{\ast} xy ) \theta^v(-x) \overline{E( {\rm y}( y))} \dd x\, \dd^{\ast}y \preccurlyeq {\rm y}(A)^{\eta(1+\varepsilon)}. $$     
Thus by Theorem \ref{thm32}  we see that the contribution of $w_{\ast}$ in \eqref{formula} is
\begin{displaymath}
    \begin{split}
    & \preccurlyeq q^{\varepsilon} \frac{\mathcal{N}_q  } {q^{n-1}} Z^{2\eta_1}  \sum_{q^n \mid c_1, \ldots, c_{n-1}\ll q^{n+1}} \Big( \frac{c_1}{q^n} \cdots \frac{c_{n-1}}{q^n}, q^{\infty}\Big)
  \leq q^{\varepsilon}  \mathcal{N}_q  Z^{2\eta_1} \Big(\sum_{\substack{\gamma < q \\ \gamma \mid q^{\infty}}} 1 \Big)^{n-1}   \preccurlyeq  \mathcal{N}_q  Z^{2\eta_1}  \end{split}
\end{displaymath}
by a standard application of Rankin's trick in the last step. This bound majorizes \eqref{trivw} and bounds the right hand side of \eqref{formula} under the current hypotheses on $N, M, X, E$. From \eqref{hecke} we therefore obtain
$$\int_{\Gamma(q)^{\natural}} |A_{\varpi}(\textbf{1})|^2 |\lambda_{\varpi}(m)|^2   |\langle W_{\mu_{\varpi}}, E^{(Z, 1, \ldots, 1))}\rangle|^2 \dd \varpi  \preccurlyeq q^{\varepsilon}  \mathcal{N}_q Z^{2\eta_1},$$ provided that $(m, q) = 1$ and \eqref{mz} holds where $r, R$ are determined by the support of $E$. Combining this with \eqref{finite} and Lemma \ref{whit}, we conclude the following.
\begin{prop}\label{density-kuz} There exists an absolute constant $K > 0$ with the following property. Fix $M, T > 1$, $q$ squarefree,  and suppose that $T \leq M^{-K} q^{n+1}$. Fix a place $v$ of $\Bbb{Q}$.  If $v = p$ is finite, assume that $p \nmid q$. 
Then 
$$\underset{ \| \mu_{\varpi}\| \leq M}{ \int_{\Gamma(q)^{\natural}}} 
|A_{\varpi}(\textbf{{\rm 1}})|^2 T^{2 \sigma_{\varpi,v}} \dd \varpi   \ll_{v, \varepsilon} M^{K} q^{\varepsilon}  \mathcal{N}_q .$$   
\end{prop}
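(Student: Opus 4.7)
The plan is to convert the Kuznetsov-type inequality just established in this section, namely
\[
\int_{\Gamma(q)^{\natural}} |A_{\varpi}(\mathbf{1})|^2 |\lambda_{\varpi}(m)|^2 |\langle W_{\mu_{\varpi}}, E^{(Z,1,\ldots,1)}\rangle|^2 \, \dd\varpi \preccurlyeq q^{\varepsilon}\mathcal{N}_q Z^{2\eta_1}
\]
(valid whenever $(m,q)=1$, $mZ\leq K^{-1}(1+1/r+R)^{-K}q^{n+1}$, and $E$ is supported in $[r,R]^{n-1}$) into the assertion of the proposition, by plugging in the Whittaker lower bound of Lemma~\ref{whit} in the archimedean case and the Hecke lower bound \eqref{finite} in the non-archimedean case. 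The two cases are handled in parallel.

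\emph{Case $v=\infty$.} Take $m=1$, $Z=T$, and use the family $E_1,\ldots,E_r$ with $r\leq M^K$ from Lemma~\ref{whit}, each supported in $[M^{-K},M^K]^{n-1}$. For every $\varpi$ with $\|\mu_{\varpi}\|\leq M$ the lemma yields
\[
\sum_{j=1}^{r}|\langle E_j^{(T,1,\ldots,1)}, W_{\mu_{\varpi}}\rangle|^2 \gg T^{2\eta_1+2\sigma_{\varpi,\infty}} M^{-K}.
\]
Multiplying by $|A_{\varpi}(\mathbf{1})|^2$, integrating over $\{\|\mu_{\varpi}\|\leq M\}$, summing the Kuznetsov inequality over $j=1,\ldots,r$, and dividing through by $T^{2\eta_1}M^{-K}$ produces the desired bound. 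The support of the $E_j$ converts the Kuznetsov condition $mZ\leq K^{-1}(1+1/r+R)^{-K}q^{n+1}$ into exactly the hypothesis $T\leq M^{-K}q^{n+1}$ after enlarging the absolute constant $K$.

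\emph{Case $v=p$ finite with $p\nmid q$.} Now $T^{2\sigma_{\varpi,p}}$ must be detected via Hecke eigenvalues rather than through the archimedean Whittaker function. Fix $Z=Z_0=M^{K^2+1}$, polynomial in $M$ but large enough to apply Lemma~\ref{whit}, giving $\sum_j|\langle E_j^{(Z_0,1,\ldots,1)},W_{\mu_{\varpi}}\rangle|^2\gg Z_0^{2\eta_1}M^{-K}$ (discarding the factor $Z_0^{2\sigma_{\varpi,\infty}}$ by positivity). Since $\sigma_{\varpi,p}$ is uniformly bounded (by the trivial bound $(n-1)/2$), the target inequality is immediate for bounded $T$; otherwise pick an integer $\nu>n$ with $p^{\nu}\asymp T$ and $p^{\nu}Z_0\leq M^{-K}q^{n+1}$. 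Then \eqref{finite} supplies some $j(\varpi)\in\{0,\ldots,n-1\}$ with $|\lambda_{\varpi}(p^{\nu-j(\varpi)})|^2\gg_{p,n} T^{2\sigma_{\varpi,p}}$, so
\[
T^{2\sigma_{\varpi,p}} \ll_{p,n} \sum_{j=0}^{n-1}|\lambda_{\varpi}(p^{\nu-j})|^2.
\]
Applying the Kuznetsov inequality with $m=p^{\nu-j}$ (coprime to $q$) for each $j=0,\ldots,n-1$ and test function $E_k$ for $k=1,\ldots,r$, then summing, yields the claim.

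There is no single hard step here: the substantive work has been done in the preceding paragraphs of this section (the Weyl-element reduction from Lemma~\ref{lem41}, the Kloosterman bound of Theorem~\ref{thm32}, and the Whittaker analysis of Lemma~\ref{whit}). The only remaining task is bookkeeping: absorb the polynomial losses from Lemma~\ref{whit}, the size $r\leq M^K$ of the test-function family, the adjustment of the support-dependent constant in the Kuznetsov condition, and the $p$-dependent prefactors arising from \eqref{finite} into the single factor $M^K q^{\varepsilon}$ on the right of the proposition. Since every intermediate loss is polynomial in $M$ (and at worst a constant depending on $v,n$), this is routine.
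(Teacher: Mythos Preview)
Your proposal is correct and follows exactly the approach the paper takes: the paper's entire proof is the one-line remark ``Combining this with \eqref{finite} and Lemma~\ref{whit}, we conclude,'' and you have accurately unpacked both the archimedean case (Lemma~\ref{whit} with $Z=T$) and the finite-place case (\eqref{finite} together with Lemma~\ref{whit} at a fixed $Z_0$). One tiny omission: in the archimedean case Lemma~\ref{whit} requires $Z>M^{K^2}$, so strictly speaking you should also dispose of the range $T\le M^{K^2}$ separately (just as you did for bounded $T$ at the finite place), but this is trivial since then $T^{2\sigma_{\varpi,\infty}}\ll M^{K}$ and the $T=M^{K^2+1}$ bound suffices.
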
 

\section{Fourier coefficients}\label{sec6}
   
In this section we prove Theorem \ref{thm15} and Theorem \ref{thm1}. 

\subsection{Towards local computations} Recall 
that given a cuspidal automorphic representation $\pi\mid L^2(X_q)$ as in \eqref{pimidxq} we defined in Section \ref{sec:adelising}  the associated subspace $V_{\pi}$ in $L^2(X_q)$ by de-adelisation. The first step in proving Theorem \ref{thm15} is to reduce it to certain local computations. We factor $\pi=\otimes_v \pi_v$. For finite places $v=p$ we write $\pi_p^{K_p(p^m)}$ for the space of $K_p(p^m)$-invariant vectors in $\pi_p$ with $K_p(q)$ as in \eqref{cpq}. We also fix an invariant inner product $\langle \cdot,\cdot\rangle_{\pi_p}$ on $\pi_p$. Recall, from \cite[Definition~2.1]{LM}, that the stable integral $\int^{\text{st}}_{U(\Bbb{Q}_p)}$ is defined by requiring that the corresponding integrals over sufficiently large open compact subgroups of $U(\Bbb{Q}_p)$ stabilizes.  Define the local quantity 
\begin{equation}
	\mathcal{S}_{\pi_p,p^m}(y) = \sum_{v\in {\rm ONB}(\pi_p^{K_p(p^m)})} \int_{U(\Bbb{Q}_p)}^{{\rm st}} \langle  \pi_p(u)v,v\rangle_{\pi_p} \boldsymbol{\psi}_{\Bbb{Q}_p}(yuy^{-1})^{-1}\dd u. \nonumber
\end{equation}
Note that this quantity is independent of the choice of $\langle \cdot,\cdot\rangle_{\pi_p}$. Let ${\rm Wh}(\pi_p)$ be the local Whittaker model for $\pi_p$ and fix an isomorphism $\pi_p\ni F_p \mapsto W_{F_p}\in {\rm Wh}(\pi_p).$ This amounts to choosing (the normalization of) the Whittaker functional. Recall the definition \eqref{dq} of $D_q$ and set $\boldsymbol{\psi}_{\Bbb{Q}_p}^{\natural}(u)=\boldsymbol{\psi}_{\Bbb{Q}_p}(D_quD_q^{-1})$. 

We start by inserting \eqref{eq:single_Fourier_coeff} to obtain
\begin{displaymath}
\begin{split}
	 \sum_{\varpi\in {\rm ONB}(V_{\pi})} \vert A_{\varpi}(1)\vert^2 &= \frac{C(\pi)}{\mathcal{V}_q} \sum_{F\in {\rm ONB}(\pi^{K(q)^{\natural}})} \prod_{p\mid q}I_p(W_{F,p})
	 \\
	& = \frac{C(\pi)}{\mathcal{V}_q} \prod_{p\mid q}\sum_{F_p\in {\rm ONB}(\pi_p^{K_p(q)})} I_p(W_{\pi_p(D_q)F_p}).
	 \end{split}
\end{displaymath}
We compute 
\begin{displaymath}
   \begin{split}
	   I_p(W_{\pi_p(D_q)F_p}) &= \int_{U(\Bbb{Q}_p)}^{{\rm st}} \frac{\langle  \pi_v(uD_q)F_p,\pi_p(D_q)F_p\rangle_{\pi_p}}{\langle \pi_p(D_q)F_p,\pi_p(D_q)F_p\rangle_{\pi_p}} \boldsymbol{\psi}_{\Bbb{Q}_p}(u)^{-1}\dd u \\ &= p^{v_p(q)\cdot \frac{1}{6} (n+1)n(n-1)}\int_{U(\Bbb{Q}_p)}^{{\rm st}} \frac{\langle  \pi_p(u)F_p,F_p\rangle_{\pi_p}}{\langle F_p, F_p\rangle_{\pi_p}} \boldsymbol{\psi}_{\Bbb{Q}_p}^{\natural}(u)^{-1}\dd u. 
	\end{split}
\end{displaymath}
Thus we have
\begin{equation}
	  \sum_{\varpi\in {\rm ONB}(V_{\pi})} \vert A_{\varpi}(1)\vert^2 = C(\pi)\cdot \frac{\mathcal{N}_q}{\mathcal{V}_q} q^{\frac{n(n-1)}{2}} \cdot \prod_{p\mid q} \mathcal{S}_{\pi_p,p^{v_p(q)}}(D_q). \nonumber
\end{equation}
Recall the lower bound $C(\pi) \gg (\Vert \mu_{\pi}\Vert q)^{-\varepsilon}$ from \eqref{eq:lower_bound_cpi}. Since $\dim_{\Bbb{C}}(V_{\pi}) = \prod_{p\mid q} \dim_{\Bbb{C}}(\pi_p^{K_p(p^{v_p(q)})})$, we conclude  the following. 
\begin{prop}\label{prop:global_to_local}
Let $q\in\Bbb{N}$ be an integer. Assume that 
\begin{equation}
	\mathcal{S}_{\pi_p,p^m}(D_q) \gg 1.\label{local_assumption_1}
\end{equation}
and
\begin{equation}
	\dim_{\Bbb{C}}(\pi_p^{K_p(p^m)})\ll  p^{m\cdot \frac{n(n-1)}{2}}\label{local_assumption_2}
\end{equation}
for all $p^m\Vert q$. For any cuspidal automorphic representation $\pi\mid L^2(X_q)$ we have
\begin{equation}
	\sum_{\varpi\in {\rm ONB}(V_{\pi})} \vert A_{\varpi}(1)\vert^2\gg (\Vert \mu_{\pi}\Vert q)^{-\varepsilon}\dim(V_{\pi})\frac{q^{n(n-1)(n-2)/6}}{q^{n^2-1}}.\nonumber
\end{equation}
\end{prop}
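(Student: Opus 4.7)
The plan is to read off the conclusion directly from the factorised identity
\begin{equation*}
\sum_{\varpi\in {\rm ONB}(V_\pi)} |A_\varpi(1)|^2 = C(\pi)\cdot\frac{\mathcal{N}_q}{\mathcal{V}_q}\,q^{n(n-1)/2}\cdot\prod_{p\mid q}\mathcal{S}_{\pi_p,p^{v_p(q)}}(D_q)
\end{equation*}
already derived immediately before the statement, combined with the three lower bounds at our disposal: the archimedean bound $C(\pi)\gg (\Vert\mu_\pi\Vert q)^{-\varepsilon}$ from \eqref{eq:lower_bound_cpi}, the local positivity hypothesis \eqref{local_assumption_1}, and the local dimension hypothesis \eqref{local_assumption_2}. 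The proof is essentially a packaging of these three ingredients, so the only thing to watch is the accumulation of $q^\varepsilon$ losses.

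First I would apply \eqref{local_assumption_1} prime by prime. Each factor $\mathcal{S}_{\pi_p,p^{v_p(q)}}(D_q)$ is bounded below by an absolute positive constant, so taking the product over $p\mid q$ costs at most $q^\varepsilon$ in view of the standard divisor bound $\omega(q) = O(\log q/\log\log q)$. Combining with the archimedean lower bound on $C(\pi)$, this gives
\begin{equation*}
\sum_{\varpi\in {\rm ONB}(V_\pi)} |A_\varpi(1)|^2 \gg (\Vert\mu_\pi\Vert q)^{-\varepsilon}\,\frac{\mathcal{N}_q}{\mathcal{V}_q}\, q^{n(n-1)/2}.
\end{equation*}

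Next I would convert the factor $q^{n(n-1)/2}$ into $\dim V_\pi$. Because $V_\pi$ corresponds under adelisation to the tensor product $\bigotimes_{p\mid q}\pi_p^{C_p(p^{v_p(q)})}$ (the archimedean component being spherical and one-dimensional), multiplying the local bounds in \eqref{local_assumption_2} across $p\mid q$ yields $\dim V_\pi \ll q^{n(n-1)/2}$. The direction of this inequality is exactly what is needed to substitute $\dim V_\pi$ for $q^{n(n-1)/2}$ in the previous display (at the cost of an absolute constant which is absorbed into the $q^\varepsilon$ factor). Inserting the explicit values $\mathcal{N}_q = q^{n(n-1)(n-2)/6}$ and $\mathcal{V}_q = q^{n^2-1+o(1)}$ from \eqref{vqnq} then produces the stated estimate.

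There is no genuine obstacle at this stage: Proposition \ref{prop:global_to_local} is a short and formal consequence of the identity above once the two local hypotheses are granted. The substantive work, to be carried out in the subsequent sections, is the verification of \eqref{local_assumption_1} and \eqref{local_assumption_2}, a local $p$-adic representation-theoretic problem; by the remarks after Theorem \ref{thm15} this is precisely where the squarefree restriction on $q$ enters, with the supercuspidal case being the most delicate.
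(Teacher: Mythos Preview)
Your proposal is correct and follows the paper's own approach essentially verbatim: the paper derives the factorised identity immediately before the proposition, invokes $C(\pi)\gg(\Vert\mu_\pi\Vert q)^{-\varepsilon}$ from \eqref{eq:lower_bound_cpi}, and uses the factorisation $\dim_{\Bbb{C}} V_\pi = \prod_{p\mid q}\dim_{\Bbb{C}}\pi_p^{C_p(p^{v_p(q)})}$ together with the two local hypotheses to conclude. Your explicit bookkeeping of the $q^\varepsilon$ losses from the $\omega(q)$-fold product of implied constants is a useful clarification that the paper leaves implicit.
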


We will now proceed by showing \eqref{local_assumption_1} and \eqref{local_assumption_2} for all primes $p$ and $m=1$. Thus the conclusion of the proposition above holds for all squarefree $q$; which is precisely what is claimed in Theorem \ref{thm15}. We expect that \eqref{local_assumption_1} and \eqref{local_assumption_2} hold for general $m$ and most likely the bounds are essentially sharp for generic irreducible representations $\pi_p$. In the toy case $n=2$ computations verifying these assertions can be found in \cite{As} and \cite{miyauchi-yamauchi_remark}. We also refer to \cite{La2} for some results on the support of matrix coefficients. 

From now on we work locally at a place $v=p$. Throughout $\pi_p$ will be an irreducible generic smooth admissible (unitary) representation of ${\rm GL}_n(\Bbb{Q}_p)$. We write $c(\pi_p)$ for the exponent-conductor. This is the smallest non-negative integer $m$ such that $\pi_p$ has non-trivial $K_{1,p}(m)$-fixed vectors where $K_{1,p}(m)$ is the standard congruence subgroup consisting of matrices in ${\rm GL}_n(\Bbb{Z}_p)$ with bottom row congruent to $(0,\ldots,0,1)$ modulo $p^m$. We write $\rho(\pi_p)$ for the depth of $\pi_p$. Finally, given any open compact subgroup $C\subset {\rm GL}_n(\Bbb{Q}_p)$ we slightly abuse notation and write $\pi_p^C$ for the space of $C$-fixed vectors in $\pi_p$. Let $\omega_{\pi_p}$ be the central character of $\pi_p$. Without loss of generality we can assume that $\omega_{\pi_p}(p)=1$, as this property can be achieved by twisting $\pi_p$ with an unramified character if necessary.

First we recall that there is a standard parabolic subgroup $P=MN\subset {\rm GL}_n$ such that 
\begin{equation}
	\pi_p = \text{Ind}_P^G(\sigma), \nonumber
\end{equation}
where $\sigma$ is an irreducible essentially square integrable representation of $M$. By the classification of standard parabolic subgroups we find a partition $(n_1,\ldots,n_k)$ of $n$ such that $M \cong {\rm GL}_{n_1}(\Bbb{Q}_p)\times \ldots \times {\rm GL}_{n_k}(\Bbb{Q}_p)$. Thus we can write $\sigma = \tau_1\otimes \ldots\otimes \tau_k$, for irreducible essentially square integrable representations $\tau_i$ of ${\rm GL}_{n_i}(\Bbb{Q}_p)$ for $i=1,\ldots,k$ with the usual adjectives.

The following result classifies representations with $K_p(p^m)$-fixed vectors  in terms of conductors. This is certainly well-known to experts, but has been explicitly written down in general only recently in the preprint \cite[Theorem~1.2]{miyauchi-yamauchi_remark}. 

\begin{lemma}
Let $\pi$ be as above. Then $\pi_p^{K_p(p^m)}\neq \{0\}$ if and only if $c(\tau_i)\leq m\cdot n_i$.
\end{lemma}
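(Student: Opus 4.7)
The plan is to reduce, via a standard Iwahori factorization argument, the question about $\pi_p = \mathrm{Ind}_P^G(\sigma)$ to a corresponding question about $\sigma$ on the Levi $M$, and then to separate the tensor product into the individual essentially square integrable factors $\tau_i$, where a known local characterization applies. First I would check that $C_p(p^m)$ admits an Iwahori factorization with respect to $P = MN$, i.e.
\[ C_p(p^m) = (C_p(p^m)\cap N^{\mathrm{op}})\cdot(C_p(p^m)\cap M)\cdot(C_p(p^m)\cap N). \]
This is immediate from the description $C_p(p^m) = \{g \in K_p : g \equiv I_n \pmod{p^m}\}$: any such $g$ has a Bruhat-type factorization $g = u^{\mathrm{op}}\, m\, u$ in $N^{\mathrm{op}} \cdot M \cdot N$, and the congruence $g \equiv I_n \pmod{p^m}$ is equivalent to congruences on each block, so $u, u^{\mathrm{op}}, m$ automatically lie in the corresponding intersections. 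Moreover $C_p(p^m)\cap M$ is canonically $\prod_{i=1}^k C_p^{(n_i)}(p^m)$, the product of principal congruence subgroups of the Levi factors.

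Next I would invoke Casselman's theorem: for a compact open subgroup $J \subset G$ admitting Iwahori factorization with respect to $P$, evaluation at the identity induces an isomorphism of vector spaces
\[ (\mathrm{Ind}_P^G \sigma)^J \cong \sigma^{J \cap M}. \]
Taking $J = C_p(p^m)$ and using $\sigma = \tau_1 \otimes \cdots \otimes \tau_k$, this yields
\[ \pi_p^{C_p(p^m)} \cong \bigotimes_{i=1}^k \tau_i^{C_p^{(n_i)}(p^m)}, \]
which is nonzero if and only if each tensor factor is nonzero. This purely formal reduction converts the question into one about each essentially square integrable factor individually.

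Finally, for each essentially square integrable representation $\tau_i$ of ${\rm GL}_{n_i}(\Bbb{Q}_p)$, I would invoke the characterization
\[ \tau_i^{C_p^{(n_i)}(p^m)} \neq \{0\} \iff c(\tau_i) \leq m\, n_i. \]
This rests on the Zelevinsky classification, which realizes $\tau_i$ as the unique essentially square integrable quotient of a segment $\rho, \rho\nu,\ldots, \rho\nu^{\ell-1}$ built from a supercuspidal $\rho$ of some ${\rm GL}_{d}(\Bbb{Q}_p)$ with $n_i = \ell d$, together with the Bushnell--Kutzko theory of types, which computes both the conductor and the minimal level $m$ at which a principal congruence subgroup has fixed vectors. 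Combining the two ingredients (and bookkeeping the Steinberg edge case when $\rho$ is an unramified character of ${\rm GL}_1$) gives the stated equivalence.

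The main obstacle is precisely this last step: Steps 1 and 2 are entirely formal once the Iwahori factorization is in hand, but the precise relationship between conductor and principal-congruence fixed-vector level for essentially square integrable (and in particular supercuspidal) representations is a nontrivial piece of local representation theory. Since \cite{miyauchi-yamauchi_remark} establishes exactly this characterization for general $m$, invoking their Theorem 1.2 in conjunction with the reduction above completes the proof.
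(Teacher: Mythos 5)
Your overall approach is the same as the paper's: establish the Iwahori factorization of $C_p(p^m)$ with respect to $P$, use it to reduce the question about $\pi_p={\rm Ind}_P^G(\sigma)$ to a question about each essentially square integrable factor $\tau_i$ on the Levi, and then invoke the known local classification (the paper cites \cite[Proposition~2]{bushnell-henniart_counting} for the square integrable case together with \cite[Lemma 23.2]{NV} for the reduction; you cite \cite[Theorem~1.2]{miyauchi-yamauchi_remark}, which is also valid).

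However, Step~2 of your argument contains a genuine error. The claim that evaluation at the identity gives an isomorphism $({\rm Ind}_P^G\sigma)^J \cong \sigma^{J\cap M}$ for $J=C_p(p^m)$ is false, and is not Casselman's theorem. The correct statement is the Mackey/Frobenius dimension formula
\[
\dim \pi_p^{C_p(p^m)} \;=\; \#\bigl(P(\Bbb{Q}_p)\backslash {\rm GL}_n(\Bbb{Q}_p)/C_p(p^m)\bigr)\cdot \prod_{i=1}^k \dim \tau_i^{C_p^{(n_i)}(p^m)},
\]
which is exactly what appears as Lemma~\ref{lm:dimensions_ps} in the paper (quoted there from \cite[Lemma~3.1]{miyauchi-yamauchi_remark} and \cite[Lemma 23.2]{NV}); the extra double-coset factor is crucial and is typically $>1$. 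Concretely, for $G={\rm GL}_2(\Bbb{Q}_p)$, $P=B$, $\sigma$ an unramified character and $J=C_p(p)$, the left-hand side has dimension $p+1=|{\rm GL}_2(\Bbb{F}_p)/B(\Bbb{F}_p)|$ while $\sigma^{J\cap T}$ has dimension $1$. Evaluation at the identity is surjective (by the Iwahori factorization, as you observe) but its kernel consists of functions supported off $P\cdot J$, which is nonzero whenever there is more than one double coset. Fortunately, for the non-vanishing equivalence you actually need, the correct dimension formula suffices: since the double-coset index is a positive integer, $\pi_p^{C_p(p^m)}\neq\{0\}$ if and only if each $\tau_i^{C_p^{(n_i)}(p^m)}\neq\{0\}$. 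So the argument is repaired by replacing your claimed isomorphism with the multiplicativity of dimensions, after which the remainder of your reduction and the appeal to the local classification go through.
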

\begin{proof}
For $\pi$ essentially square integrable (i.e.\ $k=1$) this was shown in \cite[Proposition~2]{bushnell-henniart_counting}.\footnote{Note that the authors in loc.\ cit.\ make other assumptions on the additive character $\psi'$. This implies that their conductor $f(\pi,\psi')$, which is defined via the epsilon factor, differs from ours, namely $f(\pi,\psi')=c(\pi)+n$.} The general situation ($k>1$) is easily reduced to this case by using \cite[Lemma 23.2]{NV}.
\end{proof}

Given an irreducible admissible representation $\tau_i$ of ${\rm GL}_{n_i}(\Bbb{Q}_p)$,   we can associate an $n_i$-dimensional Weil-Deligne representation $\rho_{i}=(r_i,N_i)$ via the local Langlands correspondence. Note that $\tau_i$ is essentially square integrable if and only if $\rho_i$ is indecomposable. Furthermore, $\tau_i$ is supercuspidal if and only if $\rho_i$ is irreducible; see \cite[Section~1.2.3]{wedhorn_llc}. According to \cite[Section~5]{rohrlich_elliptic},  any indecomposable Weil-Deligne representation is of the form $\rho_i\cong (r_i,0)\otimes \text{sp}(k_i)$, where $k_i\mid n_i$ and $r_i$ is an irreducible $\frac{n_i}{k_i}$-dimensional representation of the Weil-group (of $\Bbb{Q}_p$). We write $c(r_i)$ for the exponent conductor of $r_i$. This conductor can be defined on the Weil-Deligne side, but for our purposes it suffices to define $c(r_i)$ for irreducible $r_i$ via the local Langlands correspondence. The conductor of essentially square integrable representations has been computed in \cite[Section~10]{rohrlich_elliptic}. We have 
\begin{equation}
	c(\tau_i) = \begin{cases}
		k_i\cdot c(r_i) &\text{ if }r_i \text{ is ramified},\\
		k_i-1 &\text{ if }r_i \text{ is unramified}.
	\end{cases}
\end{equation}
If $k_i=n_i$, then $r_i$ is nothing but a character $r_i=\chi$ and $c(r_i)$ coincides with the usual exponent conductor of $\chi$. In this case $\tau_i=\chi\text{St}_{n_i}$ is a twist of the Steinberg representation. This gives us a good understanding of the representations we need to consider in view of resolving \eqref{local_assumption_1} and \eqref{local_assumption_2} case by case. The situations we need to consider are given in the following lemma, which follows directly from our discussion so far.

\begin{lemma}\label{lm:class}
Suppose $\pi_p$ is irreducible, generic and $\pi_p^{K_p(p)}$ is non-trivial. Then we must be in one of the following cases.
\begin{itemize}
	\item \textbf{Case~1:} $\pi_p$ is an (essentially) square integrable representation with $c(\pi_p)\leq n$. This includes the following two sub-cases:
	\begin{itemize}
		\item \textbf{Case~1.a:} $\pi_p$ is supercuspidal with $c(\pi_p)=n$ (i.e.\ depth-zero).
		\item \textbf{Case~1.b:} $\pi_p$ is a special representation with $c(\pi_p)\leq n$. Thus there is $d\mid n$ and $\tau$ supercuspidal with $c(\tau)\leq \frac{n}{d}$ so that $\pi_p$ is the special representation $\text{St}(\tau,d)$. (If $d=n$, then $\tau=\chi$ is a character with $c(\chi)\leq 1$ and we have $\pi_p =\chi\cdot \text{St}_n$, where we write $\text{St}_n$ for the Steinberg representation.)
	\end{itemize} 
	\item \textbf{Case~2:} There is a parabolic subgroup $P=MN$ and an essentially square integrable representation $\sigma=\tau_1\otimes\ldots\otimes\tau_k$ of $M$ with $c(\tau_i) \leq n_i$ such that ${\rm Ind}_{P(\Bbb{Q}_p)}^{{\rm GL}_n(\Bbb{Q}_p)}(\sigma)$ is irreducible and $\pi_p={\rm Ind}_{P(\Bbb{Q}_p)}^{{\rm GL}_n(\Bbb{Q}_p)}(\sigma)$.
\end{itemize}
\end{lemma}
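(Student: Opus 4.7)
The plan is to combine three ingredients, all of which have been assembled in the discussion preceding the lemma: the structural fact $\pi_p = \text{Ind}_{P(\Bbb{Q}_p)}^{{\rm GL}_n(\Bbb{Q}_p)}(\tau_1 \otimes \cdots \otimes \tau_k)$ with the $\tau_i$ essentially square integrable on the Levi factors ${\rm GL}_{n_i}(\Bbb{Q}_p)$; the previous lemma, which characterizes $\pi_p^{C_p(p^m)} \neq \{0\}$ as $c(\tau_i) \leq m\,n_i$ for all $i$; and the Rohrlich conductor formula for essentially square integrable representations of ${\rm GL}_{n_i}(\Bbb{Q}_p)$.

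First I would apply the previous lemma at $m=1$ to turn $\pi_p^{C_p(p)} \neq \{0\}$ into the system of inequalities $c(\tau_i) \leq n_i$ for every $i$. If $k \geq 2$, the parabolic is proper and we are in Case~2 verbatim; this half of the dichotomy needs no further work beyond observing that the parabolic induction from $\sigma$ is irreducible, which is built into the structural input.

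If instead $k=1$, then $\pi_p$ is itself essentially square integrable with $c(\pi_p) \leq n$. I would invoke Rohrlich's description to write its Weil--Deligne parameter as $(r,0)\otimes\text{sp}(d)$ for some $d \mid n$ and an irreducible Weil representation $r$ of dimension $n/d$, equivalently $\pi_p = \text{St}(\tau,d)$ with $\tau$ the supercuspidal of ${\rm GL}_{n/d}(\Bbb{Q}_p)$ corresponding to $r$. When $d=1$, $\pi_p = \tau$ is supercuspidal of ${\rm GL}_n(\Bbb{Q}_p)$; since the conductor of a supercuspidal is always at least $n$, with equality iff it is depth-zero, the bound $c(\pi_p) \leq n$ forces $c(\pi_p) = n$ and $\pi_p$ depth-zero, which is Case~1a. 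When $d > 1$, the recalled formula gives $c(\pi_p) = d\cdot c(r)$ in the ramified case and $c(\pi_p) = d-1$ (with $c(r) = 0$) in the unramified case; in either situation the bound $c(\pi_p) \leq n$ rearranges to $c(\tau) = c(r) \leq n/d$, which is exactly Case~1b.

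The main obstacle: in truth there is none, as the lemma is pure bookkeeping on top of the three inputs above. The only statement that is not quite written down in the excerpt is the sharp lower bound $c(\pi_p) \geq n$ for supercuspidal $\pi_p$ on ${\rm GL}_n(\Bbb{Q}_p)$, with equality iff the depth is zero; this is classical and follows for instance from the explicit construction of supercuspidals via Bushnell--Kutzko types, and is the only external fact the proof needs beyond the recalled classification ingredients.
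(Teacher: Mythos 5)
Your proposal is correct and follows exactly the route the paper intends: the paper itself gives no explicit proof, remarking only that the lemma ``follows directly from our discussion so far,'' and your write-up is precisely the spelled-out version of that discussion (the $m=1$ case of the preceding conductor lemma, the Rohrlich formula for $c(\tau_i)$, and the dichotomy $k=1$ versus $k\geq 2$). The one extra fact you correctly flag as unstated — that a supercuspidal of ${\rm GL}_n(\Bbb{Q}_p)$ has $c(\pi_p)\geq n$ with equality iff depth zero — does need to be supplied, and your justification is fine; it also follows immediately from the Artin conductor of the corresponding irreducible $n$-dimensional Weil representation, whose degree-zero term already contributes $n$.
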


\subsection{The case of  depth-zero supercuspidal representations (Case 1.a)}   Depth-zero supercuspidal representations of ${\rm GL}_n(\Bbb{Q}_p)$  can be constructed as follows. We take a cuspidal representation $(\mu,W)$ of ${\rm GL}_n(\Bbb{Z}/p\Bbb{Z})$, inflate it to a representation of $K_p$ and extend it to ${\rm Z}(\Bbb{Q}_p)K_p$ (taking the central character into account). The so obtained representation will also be denoted by $\mu$. We can now consider the compact induction  $\pi_p=\text{c-Ind}_{ZK_p}^{\rm GL_n(\Bbb{Q}_p)}(\mu)$. This representation acts by right translation on the space of functions
\begin{displaymath}
\begin{split}
	V_c(\mu) = \{f\colon & {\rm GL}_n(\Bbb{Q}_p)\to W\colon \text{compactly supported mod }Z, \\ & f(hg) = \mu(h)f(g)\text{ for all }g\in {\rm GL}_n(\Bbb{Q}_p) \text{ and }h\in {\rm Z}(\Bbb{Q}_p)K_p\}. \nonumber
	\end{split}
\end{displaymath}
The inner product is given by
\begin{equation}
	\langle f,g\rangle_{\pi_p} = \sum_{x\in {\rm Z}K_p\backslash {\rm GL}_n(\Bbb{Q}_p)} \langle f(x),g(x)\rangle_W.\nonumber 
\end{equation}
Note that the sum-notation is justified, since ${\rm supp}(f)\cup{\rm supp}(g)$ is compact so that we are summing a finite number of terms. For $w\in W$ and $y\in {\rm GL}_n(\Bbb{Q}_p)$ we can define 
\begin{equation}
	f_{y,w}(g) =\begin{cases}
		\mu(h)w &\text{ if }g=hy\in {\rm Z}(\Bbb{Q}_p)K_p\cdot y, \\
		0 &\text{ else.}
	\end{cases} \nonumber
\end{equation}
The family $\{f_{y,w}\colon y\in  {\rm GL}_n(\Bbb{Q}_p), w\in W \}$ spans $V_c(\mu)$ and we have
\begin{equation}
\langle f_{y,w},f_{x,v} \rangle_{\pi} = \langle w,v \rangle_W\cdot \delta_{x\in {\rm Z}(\Bbb{Q}_p)K_py}.\nonumber
\end{equation}
With these preliminaries set up we can construct the following basis.

\begin{lemma}
Let $v_1,\ldots,v_l$ with $l=\dim_{\Bbb{C}}W$ be an orthonormal basis of $W$. Then $(f_{1,v_i})_{i=1}^{l}$ is an orthonormal basis of $V_c(\mu)^{K_p(p)}$.
\end{lemma}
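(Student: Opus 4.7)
The plan is to verify three properties: each $f_{1,v_i}$ lies in $V_c(\mu)^{C_p(p)}$, the family is pairwise orthonormal, and it spans the whole fixed space. The first two are quick mechanical checks. For invariance, note that $C_p(p)\subset K_p$, so for $k\in C_p(p)$ and any $g\in {\rm GL}_n(\Bbb{Q}_p)$ one has $gk\in Z(\Bbb{Q}_p)K_p$ iff $g\in Z(\Bbb{Q}_p)K_p$, and on that set $(\pi_p(k)f_{1,v_i})(g) = f_{1,v_i}(gk) = \mu(gk)v_i = \mu(g)\mu(k)v_i = \mu(g)v_i = f_{1,v_i}(g)$, since $\mu$ is inflated from ${\rm GL}_n(\Bbb{F}_p)=K_p/C_p(p)$ and thus trivial on $C_p(p)$. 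Orthonormality follows at once from the inner product formula $\langle f_{y,w},f_{x,v}\rangle_{\pi_p}=\langle w,v\rangle_W\,\delta_{x\in Z(\Bbb{Q}_p)K_p y}$ recalled just above, applied at $x=y=1$.

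The substantive step is spanning. The plan is to use the Mackey decomposition of the compactly induced representation, yielding
\[
V_c(\mu)^{C_p(p)} \;\cong\; \bigoplus_{x\,\in\, Z(\Bbb{Q}_p)K_p\backslash {\rm GL}_n(\Bbb{Q}_p)/C_p(p)} \text{Hom}_{C_p(p)\,\cap\, x^{-1}Z(\Bbb{Q}_p)K_p x}(\mathbf{1},\mu^x),
\]
with $\mu^x(h)=\mu(xhx^{-1})$. The double coset of $x=1$ contributes $\text{Hom}_{C_p(p)}(\mathbf{1},\mu) = W$, and this is precisely the span of the $f_{1,v_i}$, which are manifestly supported on $Z(\Bbb{Q}_p)K_p$ and whose values at $1$ recover the $v_i$. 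Any other double coset has, by the Cartan decomposition, a representative $x$ in the diagonal torus with $x\notin Z(\Bbb{Q}_p)K_p$. A direct computation then shows that the image in ${\rm GL}_n(\Bbb{F}_p)=K_p/C_p(p)$ of the conjugation map $h\mapsto xhx^{-1}$ applied to $C_p(p)\cap x^{-1}K_px$ contains the $\Bbb{F}_p$-points of the unipotent radical of a proper parabolic subgroup of ${\rm GL}_n(\Bbb{F}_p)$, determined by the pattern of strict inequalities among the $p$-adic valuations of $x$. Cuspidality of $\mu$ then kills any fixed vector under such a unipotent subgroup, so these cosets contribute zero.

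The hard part will be this last bookkeeping: correctly identifying the parabolic in terms of $x$ and verifying that conjugation lands surjectively onto the relevant unipotent radical modulo $C_p(p)$. This is standard for depth-zero supercuspidals but somewhat delicate in its exact form. As a convenient shortcut, one may instead invoke the known dimension identity $\dim_{\Bbb{C}} \pi_p^{C_p(p)}=\dim_{\Bbb{C}} W$ for such representations (see e.g.\ \cite[Proposition~2]{bushnell-henniart_counting}), after which the orthonormal family of size $l=\dim_{\Bbb{C}} W$ must automatically be a basis.
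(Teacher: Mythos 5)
Your proof takes a genuinely different route for the spanning step, and in fact is more careful than the paper's. The paper argues that it suffices to show $f_{y,w}$ is not $C_p(p)$-invariant when $y\notin Z(\Bbb{Q}_p)K_p$, and deduces this from the fact that $Z(\Bbb{Q}_p)K_p$ is the normalizer of $C_p(p)$, concluding the proof in two lines. Your Mackey decomposition makes explicit what the paper glosses over: the fixed space splits over the double cosets $Z(\Bbb{Q}_p)K_p\backslash G/C_p(p)$, and for a nonidentity coset with Cartan representative $y$ the contribution is $\operatorname{Hom}$ against a conjugate of $\mu$ restricted to $C_p(p)\cap y^{-1}K_p y$; the image of this group modulo $C_p(p)$ is the $\Bbb{F}_p$-points of a nontrivial unipotent radical, and it is the \emph{cuspidality} of $\mu$ that kills this summand. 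That this is really needed can be seen already for $y=\mathrm{diag}(1,p)$ in $n=2$: there every $y\gamma y^{-1}$ with $\gamma\in C_p(p)$ stays inside $K_p$, so the paper's ``otherwise we can choose $k$'' step has no bite (the choice of $k$ is irrelevant to whether $\mu(y\gamma y^{-1})w=w$), and the vanishing is forced only by $\mu$ having no $U(\Bbb{F}_p)$-fixed vector. In short, your route uses an extra ingredient (cuspidality), which the paper's brief argument tacitly relies on without naming it; your version is longer but closes the case the normalizer argument alone does not handle.

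One caveat on your shortcut: citing the dimension identity $\dim\pi_p^{C_p(p)}=\dim W$ to finish is only non-circular if that identity is obtained independently. In the paper, that very dimension formula ($\prod_{i=1}^{n-1}(p^i-1)$) is \emph{derived} from this lemma, and the citation to Bushnell--Henniart given in the text is only for the non-vanishing criterion ($c(\pi_p)\le mn$), not the dimension. So if you take the shortcut you should make sure the Bushnell--Henniart reference (or another source) genuinely supplies the dimension of the $C_p(p)$-fixed space, not merely its non-triviality.
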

\begin{proof}
In view of the above inner product relation it is clear that the functions $f_{1,v_i}$ are orthonormal. It is also clear that they are $K_p(p)$-invariant. Thus we only need to show that they exhaust the space $\pi_p^{K_p(p)}$. However, to see this it suffices to show that $f_{y,w}$ is not $K_p(p)$-invariant if $y\not\in {\rm Z}(\Bbb{Q}_p)K_p$. To this end write $g=zky\in {\rm Z}(\Bbb{Q}_p)K_p\cdot y$ for $k\in K_p$ and take $\gamma\in K_p(p)$. Then, writing $g\gamma = zk(y\gamma y^{-1})y$ yields the condition
\begin{equation}
	y\gamma y^{-1}\in K_p(p),\nonumber
\end{equation}
since otherwise we can choose $k$ so that $f_{y,v_i}(g) \neq f_{y,v_i}(g\gamma)$. We are done since the normalizer of $K_p(p)$ in ${\rm GL}_n(\Bbb{Q}_p)$ is ${\rm Z}(\Bbb{Q}_p)K_p$.
\end{proof}

\begin{lemma} \label{lm:sc_all_n}
For any $n\geq 2$ and $\pi_p$ depth-zero supercuspidal, we have
\begin{equation}
	\mathcal{S}_{\pi_p}(D_q) = 1.
\end{equation}
Furthermore, $$\dim \pi_p^{K_p(p)} = \prod_{i=1}^{n-1}(p^i-1) = p^{\frac{n(n-1)}{2}}(1+O(p^{-1})).$$ In particular, \eqref{local_assumption_1} and \eqref{local_assumption_2} hold for depth-zero supercuspidal representations $\pi_p$ and $m=1$. 
\end{lemma}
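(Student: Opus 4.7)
The idea is to use the orthonormal basis $(f_{1,v_i})_{i=1}^{l}$ of $\pi_p^{C_p(p)}$ from the previous lemma to reduce $\mathcal{S}_{\pi_p}(D_q)$ to a character-theoretic identity on $\mathrm{GL}_n(\mathbb{F}_p)$.

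The dimension assertion is immediate: the previous lemma identifies $\dim_{\mathbb{C}}\pi_p^{C_p(p)}$ with $\dim_{\mathbb{C}}W=\dim\mu$, and the classical dimension formula for cuspidal representations of $\mathrm{GL}_n(\mathbb{F}_p)$ gives $\dim\mu=\prod_{i=1}^{n-1}(p^i-1)=p^{n(n-1)/2}(1+O(p^{-1}))$, which simultaneously verifies \eqref{local_assumption_2}.

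For the integral $\mathcal{S}_{\pi_p}(D_q)$, I would first compute the matrix coefficient $\langle\pi_p(u)f_{1,v_i},f_{1,v_j}\rangle_{\pi_p}$. Since $f_{1,v}$ is supported on $ZK_p$, the pairing is nonzero only when the supports of $\pi_p(u)f_{1,v_i}$ and $f_{1,v_j}$ overlap, which forces $u\in ZK_p\cap U(\mathbb{Q}_p)$; combined with $\det u=1$ this reduces to $u\in U(\mathbb{Z}_p)$, on which the pairing equals $\langle\mu(\bar u)v_i,v_j\rangle_W$ with $\bar u$ the reduction mod $p$. Summing over $i$ yields
\begin{equation*}
\sum_{i=1}^{l}\langle\pi_p(u)f_{1,v_i},f_{1,v_i}\rangle_{\pi_p}=\mathbf{1}_{u\in U(\mathbb{Z}_p)}\,\mathrm{tr}\,\mu(\bar u).
\end{equation*}

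Next, the twisted character $u\mapsto\boldsymbol{\psi}_{\mathbb{Q}_p}(D_q u D_q^{-1})^{-1}$ on $U(\mathbb{Z}_p)$ factors through $U(\mathbb{F}_p)$ and produces---via the precise matching of the rescaling of superdiagonal entries by powers of $q$ against the conductor of $\psi_p$ (using $v_p(q)=1$ since $q$ is squarefree)---a \emph{nondegenerate} additive character $\theta^{-1}$, of the form $\theta(\bar u)=\psi_0(a_1\bar u_{12}+\cdots+a_{n-1}\bar u_{n-1,n})$ with $\psi_0$ a nontrivial character of $\mathbb{F}_p$ and each $a_i\in\mathbb{F}_p^{\times}$. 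Normalizing Haar measure so that $\mathrm{vol}(U(\mathbb{Z}_p))=1$, the $p$-adic integral collapses to
\begin{equation*}
\mathcal{S}_{\pi_p}(D_q)=\frac{1}{|U(\mathbb{F}_p)|}\sum_{\bar u\in U(\mathbb{F}_p)}\mathrm{tr}\,\mu(\bar u)\,\theta(\bar u)^{-1}=\mathrm{tr}(P_\theta),
\end{equation*}
where $P_\theta=|U(\mathbb{F}_p)|^{-1}\sum_u\theta(u)^{-1}\mu(u)$ is the projection of $\mu$ onto its $(U,\theta)$-Whittaker subspace. Cuspidality of $\mu$ implies it is generic, so Whittaker multiplicity-one for $\mathrm{GL}_n(\mathbb{F}_p)$ gives $\dim\mu^{(U,\theta)}=1$ and therefore $\mathrm{tr}(P_\theta)=1$, establishing $\mathcal{S}_{\pi_p}(D_q)=1$ and verifying \eqref{local_assumption_1}.

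The delicate point is the verification of nondegeneracy of $\theta$, which requires a careful tracking of how the diagonal conjugation $D_q u D_q^{-1}$ rescales each superdiagonal entry against the $p$-adic conductor of $\psi_p$; once this is in hand, the identification with $\mathrm{tr}(P_\theta)$ and the appeal to Whittaker multiplicity-one are entirely routine.
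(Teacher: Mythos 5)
Your proposal is correct and follows essentially the same route as the paper: reduce to the compactly-supported matrix coefficients via the explicit ONB $(f_{1,v_i})$ of $\pi_p^{C_p(p)}$, collapse the $p$-adic integral to a character average over $U(\mathbb{F}_p)$ against a nondegenerate character, and then invoke Gelfand--Graev uniqueness together with genericity of cuspidal representations of $\mathrm{GL}_n(\mathbb{F}_p)$. The only cosmetic difference is that you phrase the final step as $\mathrm{tr}(P_\theta)=\dim\mu^{(U,\theta)}=1$ via the projection onto the $\theta$-eigenspace, whereas the paper computes the same quantity as $\langle\chi_\mu|_U,\widetilde{\boldsymbol{\psi}}\rangle_U=\dim\mathrm{Hom}_{\mathrm{GL}_n(\mathbb{Z}/p\mathbb{Z})}(W,\mathcal{G}_{\widetilde{\boldsymbol{\psi}}})$ by Frobenius reciprocity; these are identical reformulations. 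The nondegeneracy of $\theta$ (coming from the fact that $D_q$-conjugation rescales the superdiagonal by exactly one power of $p$, matching the conductor of $\psi_p$) is asserted rather than computed in both your proposal and the paper.
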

\begin{proof}
First we reduce the situation to a problem in the realm of representations of ${\rm GL}_n$ over finite fields. To do so we note that since $\pi_p$ is supercuspidal the matrix coefficients have compact support and we do not need to worry about the stabilization of the integral. We simply have
\begin{equation}
	\mathcal{S}_{\pi_p,p}(D_q) = \sum_{i=1}^l\int_{U(\Bbb{Q}_p)}\langle \pi_p(n)f_{1,v_i},f_{1,v_i}\rangle_{\pi} \boldsymbol{\psi}_{\Bbb{Q}_p}^{\natural}(u)^{-1}\dd u.\nonumber
\end{equation}

Note that $\boldsymbol{\psi}_{\Bbb{Q}_p}^{\natural}(u)$ is trivial for $u\in U(p\Bbb{Z}_p)$. Thus the character $\widetilde{\boldsymbol{\psi}} \colon U(\Bbb{Z}/p\Bbb{Z})\to \Bbb{C}^{\times}$ defined by
\begin{equation}
	\widetilde{\boldsymbol{\psi}}(\tilde{u}) = \boldsymbol{\psi}_{\Bbb{Q}_p}^{\natural}(u) \text{ for } \tilde{u}=[u]\in U(\Bbb{Z}_p)/U(p\Bbb{Z}_p) \cong U(\Bbb{Z}/p\Bbb{Z}) \nonumber
\end{equation}
is well defined and non-degenerate. Note that $$\langle \pi_p(u)f_{1,v_i},f_{1,v_i}\rangle_{\pi_p} =\langle \mu(u)v_i,v_i\rangle_W\delta_{u\in K_p}.$$ Thus we get
\begin{equation}
	\mathcal{S}_{\pi_p,p}(D_q) = \int_{U(\Bbb{Q}_p)\cap K_p}\sum_{i=1}^l\langle \mu(u)v_i,v_i\rangle_{W}\cdot \boldsymbol{\psi}_{\Bbb{Q}_p}^{\natural}(u)^{-1}\dd u.\nonumber
\end{equation}
Since $(v_i)_{i=1}^l$ is an orthonormal basis of $W$, we get $\sum_i\langle \mu(u)v_i,v_i \rangle_{W} = \text{tr}(\mu(u))$. Note that  $U(\Bbb{Q}_p) \cap K_p = U(\Bbb{Z}_p)$, the integrand is $U(p\Bbb{Z}_p) = U(\Bbb{Q}_p)\cap K_p(p)$ invariant and ${\rm Vol}(U(p\Bbb{Z}_p),\dd u) = p^{-\frac{n(n-1)}{2}}$, so we obtain
\begin{equation}
	\mathcal{S}_{\pi_p,p}(D_q) = p^{-\frac{n(n-1)}{2}} \sum_{u\in U(\Bbb{Z}/p\Bbb{Z})} {\rm tr}(\mu(u)) \widetilde{\boldsymbol{\psi}}(u). \label{finite_char_average}
\end{equation}

We start by observing that ${\rm tr}\mu(g) =\chi_{\mu}(g)$ is the character of $\mu$ at $g$. We get that
\begin{equation}
	\dim \pi_p^{K_p(p)} = \dim(W) = \chi_{\mu}(1) = \prod_{i=1}^{n-1}(p^i-1).\nonumber
\end{equation} 
This proves the last part of our statement. (For the evaluation of the character at $1$ see for example \cite[Theorem~2]{prasad_finite}). To deduce the rest we need some more character theory. Given any finite group $H$ we define the inner product
\begin{equation}
	\langle f,g\rangle_H = \frac{1}{\# H}\sum_{h\in H} f(h)\overline{g(h)}.\nonumber 
\end{equation}
With this at hand we can rephrase the statement of \eqref{finite_char_average} as   
\begin{equation}
	\mathcal{S}_{\pi}(a) = \langle \chi_{\mu}\vert_{U(\Bbb{Z}/p\Bbb{Z})}, \widetilde{\boldsymbol{\psi}} \rangle_{U(\Bbb{Z}/p\Bbb{Z})}.\nonumber
\end{equation}
We write $\widetilde{\boldsymbol{\psi}}^G$ for the character of the representation obtained by inducing $\widetilde{\boldsymbol{\psi}}$ from $U(\Bbb{Z}/p\Bbb{Z})$ to ${\rm GL}_n(\Bbb{Z}/p\Bbb{Z})$. Let $\mathcal{G}_{\widetilde{\boldsymbol{\psi}}}$ denote the space of this induced representation. By Frobenius reciprocity we have
\begin{equation}
	\langle \chi_{\mu}\vert_{U(\Bbb{Z}/p\Bbb{Z})}, \widetilde{\boldsymbol{\psi}} \rangle_{U(\Bbb{Z}/p\Bbb{Z})} = \langle \chi_{\mu}, \widetilde{\boldsymbol{\psi}}^G\rangle_{{\rm GL}_n(\Bbb{Z}/p\Bbb{Z})} = {\rm Hom}_{{\rm GL}_n(\Bbb{Z}/p\Bbb{Z})}(W,\mathcal{G}_{\widetilde{\boldsymbol{\psi}}}).\nonumber
\end{equation}
	
Recall that $\widetilde{\boldsymbol{\psi}}$ is non-degenerate. In this case it is a well known theorem (multiplicity one for $\mathcal{G}_{\widetilde{\boldsymbol{\psi}}}$) due to Gelfand-Graev that $$\dim_{\Bbb{C}}{\rm Hom}_{{\rm GL}_n(\Bbb{Z}/p\Bbb{Z})}(W,\mathcal{G}_{\widetilde{\boldsymbol{\psi}}})\leq 1.$$ However, by \cite[Corollary~5.4]{silberger_zink-characters} the dimension is precisely one for cuspidal representations $(\mu,W)$ (i.e.\ cuspidal representations are $\widetilde{\boldsymbol{\psi}}$-generic). This completes the proof.
\end{proof}

Thus for depth-zero supercuspidal representations $\pi_p$ we could compute $\mathcal{S}_{\pi_p}(D_q)$ and $\dim_{\Bbb{C}}(\pi_p^{K_p(p)})$ explicitly showing that \eqref{local_assumption_1} and \eqref{local_assumption_2} hold on the nose. Here we crucially used the construction of $\pi_p$ from certain representations of ${\rm GL}_n(\Bbb{Z}/p\Bbb{Z})$ and the representation theory thereof. To treat general supercuspidal representations $\pi_p$ a different argument is needed.

\subsection{Parabolically induced representations.} Suppose we have a partition $n=n_1+\ldots+n_k$ and (essentially) square integrable representations $\tau_1,\ldots,\tau_k$. Then we consider the representation $\sigma=\tau_1\otimes\ldots \otimes \tau_k$ of $M\cong {\rm GL}_{n_1}(\Bbb{Q}_p)
\times \ldots  \times {\rm GL}_{n_k}(\Bbb{Q}_p)$. If $P=MN$ is the standard parabolic subgroup with Levi component $M$, we consider the parabolically induced representation 
\begin{equation}
	\xi={\rm Ind}_{P(\Bbb{Q}_p)}^{{\rm GL}_n(\Bbb{Q}_p)}(\sigma).\nonumber
\end{equation} 
Note that at the moment we are not assuming that this representation is unitary or irreducible. 
\begin{lemma}\label{lm:dimensions_ps}
For $m\geq 1$ we have 
\begin{equation}
	\dim_{\Bbb{C}} (\xi^{K_p(p^m)}) = \# P(\Bbb{Q}_p)\backslash {\rm GL}_n(\Bbb{Q}_p)/K_p(p^m)\cdot \prod_{i=1}^k \dim \tau_i^{K_p(p^m)}. \nonumber
\end{equation}
In particular, \eqref{local_assumption_2} holds for $m=1$ and all smooth admissible representations $\pi_p$ of ${\rm GL}_n(\Bbb{Q}_p)$.
\end{lemma}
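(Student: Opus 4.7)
The plan is to apply a Mackey-style recipe for counting invariants in parabolically induced representations, combined with the Iwahori factorization of the principal congruence subgroup.

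First, a vector $f\in\xi^{C_p(p^m)}$ is precisely a function $f\colon {\rm GL}_n(\Bbb{Q}_p)\to V_\sigma$ that transforms on the left under $\sigma$ through $P(\Bbb{Q}_p)$ and is right-invariant under $C_p(p^m)$. Such an $f$ is determined by its values $f(\gamma)\in V_\sigma$ for $\gamma$ running over representatives of $P(\Bbb{Q}_p)\backslash {\rm GL}_n(\Bbb{Q}_p)/C_p(p^m)$, and each value must be $\sigma$-fixed by the stabilizer $\gamma C_p(p^m)\gamma^{-1}\cap P(\Bbb{Q}_p)$. I would then use the Iwasawa decomposition ${\rm GL}_n(\Bbb{Q}_p)=P(\Bbb{Q}_p)K_p$ to choose $\gamma\in K_p$. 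Since $C_p(p^m)$ is the kernel of the reduction ${\rm GL}_n(\Bbb{Z}_p)\to{\rm GL}_n(\Bbb{Z}/p^m\Bbb{Z})$, it is normal in $K_p$, so $\gamma C_p(p^m)\gamma^{-1}=C_p(p^m)$ for every $\gamma\in K_p$. Thus every stabilizer equals $C_p(p^m)\cap P(\Bbb{Q}_p)$ and the claimed product formula reduces to showing $\dim V_\sigma^{C_p(p^m)\cap P(\Bbb{Q}_p)}=\prod_i\dim \tau_i^{C_p(p^m)}$.

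This last factorization is the content of the Iwahori decomposition. One has
\[
C_p(p^m)=(C_p(p^m)\cap N^{\rm op}(\Bbb{Q}_p))\cdot(C_p(p^m)\cap M(\Bbb{Q}_p))\cdot(C_p(p^m)\cap N(\Bbb{Q}_p)),
\]
so intersecting with $P=MN$ gives $C_p(p^m)\cap P(\Bbb{Q}_p)=(C_p(p^m)\cap M(\Bbb{Q}_p))(C_p(p^m)\cap N(\Bbb{Q}_p))$. Since $\sigma$ is trivial on $N$, taking $\sigma$-invariants only sees $C_p(p^m)\cap M$; writing $M\cong\prod_i {\rm GL}_{n_i}(\Bbb{Q}_p)$ and $\sigma=\tau_1\otimes\cdots\otimes\tau_k$, and noting that $C_p(p^m)\cap M$ is the direct product of the principal congruence subgroups of the factors, the desired factorization drops out.

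For the particular bound \eqref{local_assumption_2} at $m=1$, I would argue by induction on $n$ using the classification in Lemma~\ref{lm:class}. Case~1.a is already handled in Lemma~\ref{lm:sc_all_n}; Case~1.b (twisted Steinberg) can be disposed of by an explicit description of $K_p$-types of $\mathrm{St}(\tau,d)$. For Case~2 I would apply the formula just proved: Iwasawa and reduction mod $p$ identify the number of double cosets with $|P(\mathbb{F}_p)\backslash {\rm GL}_n(\mathbb{F}_p)|\asymp p^{\sum_{i<j}n_in_j}$ (the point count of a generalized flag variety), the inductive hypothesis gives $\dim\tau_i^{C_p(p)}\ll p^{n_i(n_i-1)/2}$ on each ${\rm GL}_{n_i}$ with $n_i<n$, and the elementary identity
\[
\sum_{i<j} n_in_j+\sum_i\tfrac{n_i(n_i-1)}{2}=\tfrac{n(n-1)}{2}
\]
delivers $\dim\xi^{C_p(p)}\ll p^{n(n-1)/2}$. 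The main subtlety I anticipate is not the Mackey/Iwahori step itself, which is standard, but arranging the induction cleanly: one must ensure that Case~1.b admits the analogous bound independently (so that the induction only ever splits into genuinely smaller building blocks in Case~2), and one must verify that the $\gamma C_p(p^m)\gamma^{-1}=C_p(p^m)$ step really uses only the normality of $C_p(p^m)$ inside $K_p$ (not inside all of ${\rm GL}_n(\Bbb{Q}_p)$, where it fails), which is precisely what makes Iwasawa representatives indispensable.
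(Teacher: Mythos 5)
Your proof of the dimension formula is correct and follows essentially the same Mackey/Iwahori route as the cited references (\cite[Lemma~3.1]{miyauchi-yamauchi_remark}, \cite[Lemma 23.2]{NV}). The two points you flag as subtle — choosing Iwasawa representatives $\gamma\in K_p$ so that normality of $C_p(p^m)$ \emph{inside $K_p$} gives $\gamma C_p(p^m)\gamma^{-1}=C_p(p^m)$, and the Iwahori-factorization step — are exactly right and you handle them correctly. Note also that nothing in this step uses that $\sigma$ is essentially square integrable; the formula holds for arbitrary smooth admissible inducing data, which matters below.

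The gap is in the ``in particular'' clause. You run an induction on $n$ through Lemma~\ref{lm:class} but leave Case~1.b (twisted Steinberg $\mathrm{St}(\tau,d)$) as an unproved assertion (``can be disposed of by an explicit description of $K_p$-types''). This is not a side issue: the essentially square integrable $\tau_i$ appearing in your Case~2 may themselves be twisted Steinberg representations of smaller ${\rm GL}_{n_i}$, so the inductive hypothesis already has to cover Case~1.b at each level; as written the induction does not close. The paper sidesteps the case distinction entirely by invoking the Bernstein--Zelevinsky classification at the level of supercuspidal support: every irreducible admissible $\pi_p$ of ${\rm GL}_n(\Bbb{Q}_p)$ is a subquotient of a parabolic induction $\xi$ whose inducing data are \emph{supercuspidal} (not merely square integrable). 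Taking invariants by the compact open subgroup $C_p(p)$ is exact on smooth representations, so $\dim\pi_p^{C_p(p)}\le\dim\xi^{C_p(p)}$, and the dimension formula — applied with supercuspidal data, which is legitimate since your derivation did not use square integrability — combined with Lemma~\ref{lm:sc_all_n} (and the trivial ${\rm GL}_1$ case) gives the bound in one stroke, with no induction and no separate treatment of $\mathrm{St}(\tau,d)$. The missing ingredient in your argument is this subquotient monotonicity of invariant dimensions; with it, your Case~1.b follows immediately from the formula for the ambient induction from $\tau\vert\cdot\vert^{(d-1)/2}\otimes\cdots\otimes\tau\vert\cdot\vert^{-(d-1)/2}$ and Lemma~\ref{lm:sc_all_n}.
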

\begin{proof}
The dimension formula is given in \cite[Lemma~3.1]{miyauchi-yamauchi_remark} or \cite[Lemma 23.2]{NV}. The rest is a direct consequence of Lemma~\ref{lm:sc_all_n} and the Bernstein-Zelevinsky classification.	
\end{proof}

To make things more precise, we fix representatives $\gamma_1,\ldots, \gamma_l$ of the double quotient $P(\Bbb{Q}_p)\backslash {\rm GL}_n(\Bbb{Q}_p)/K_p(p^m)$ and an orthonormal basis $(v_i)_{i=1}^r$ of $\sigma^{K_p(p^m)}$. We can parametrize a basis for $\xi^{K_p(p^m)}$ by tuples $(\gamma_i,v_j)$ with $1\leq i\leq l$ and $1\leq j\leq r$. In particular $\dim \xi^{K_p(p^m)}=lr$. We model $\xi$ in the induced picture as follows. Suppose the representation $\sigma$ is modeled on a space $W$. Then we consider the space $V(\sigma)$ of smooth functions $f\colon {\rm GL}_n(\Bbb{Q}_p)\to W$ such that\footnote{The double use of $m\in\Bbb{N}$ and $m\in M(\Bbb{Q}_p)$ should not lead to confusion.} $$f(mng) = \delta_P(m)^{\frac{1}{2}}\sigma(m)f(g)$$ for all $g\in {\rm GL}_n(\Bbb{Q}_p)$, $n\in N(\Bbb{Q}_p)$ and $m\in M(\Bbb{Q}_p)$ where $\delta_P$ is the modular function of $P$. 

\begin{lemma}
A basis for $\xi^{K_p(p^m)}$ in the induced model is given by 
$${\rm B}(\xi^{K_p(p^m)}) = \{ \xi(\gamma_i^{-1})f_{1,j}\colon 1\leq j\leq r \text{ and }1\leq i\leq l\},$$ with 
\begin{multline}
	f_{1,j}(g) = {\rm Vol}(P(\Bbb{Z}_p)K_p(p^m), \dd k)^{-\frac{1}{2}} \\\quad \cdot \begin{cases}
	\delta_P(m)^{\frac{1}{2}}\sigma(m)v_j &\text{ if }g=nmk \text{ for }k\in K_p(p^m),\, n\in N(\Bbb{Q}_p),\, m\in M(\Bbb{Q}_p),\\
	0 &\text{ else.}
	\end{cases} \nonumber
\end{multline}
\end{lemma}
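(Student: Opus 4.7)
The strategy is standard: I would verify that each $\xi(\gamma_i^{-1}) f_{1,j}$ really lives in $\xi^{C_p(p^m)}$ and that the family is linearly independent; the count $lr$ will then match the dimension formula of Lemma~\ref{lm:dimensions_ps}, forcing the family to be a basis. The essential technical tool throughout is the Iwahori factorization of the principal congruence subgroup,
\[
C_p(p^m) = (C_p(p^m) \cap N^{\op})\,(C_p(p^m) \cap M)\,(C_p(p^m) \cap N),
\]
which in particular yields $P(\Bbb{Q}_p) \cap C_p(p^m) = (C_p(p^m) \cap M)(C_p(p^m) \cap N)$, together with the fact that $C_p(p^m)$ is normal in ${\rm GL}_n(\Bbb{Z}_p)$.

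First I would check that $f_{1,j}$ is well-defined. If $nmk = n'm'k'$ with $n,n'\in N(\Bbb{Q}_p)$, $m,m'\in M(\Bbb{Q}_p)$ and $k,k'\in C_p(p^m)$, then $(n')^{-1}n\cdot m = m'\cdot k'k^{-1}$, and projecting to $M$ via the semidirect factorization $P = MN$ forces $m' = m\,m_c$ for some $m_c \in C_p(p^m)\cap M$. Since $\sigma^{C_p(p^m)}$ should be read as $\sigma^{C_p(p^m)\cap M}$, we have $\sigma(m_c)v_j = v_j$, and $\delta_P(m_c) = 1$ because $m_c$ is block-diagonal with each block in $I + p^m M_{n_i}(\Bbb{Z}_p)$, giving unit determinants. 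Hence the two recipes for $f_{1,j}(g)$ agree. The transformation property $f_{1,j}(pg) = \delta_P(p)^{1/2}\sigma(p) f_{1,j}(g)$ for $p \in P(\Bbb{Q}_p)$ then follows by moving the $M$-part across using $MNM^{-1} = N$, and right $C_p(p^m)$-invariance is built into the definition.

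For the translates, the key observation is that by the Iwasawa decomposition ${\rm GL}_n(\Bbb{Q}_p) = P(\Bbb{Q}_p){\rm GL}_n(\Bbb{Z}_p)$, the double cosets $P(\Bbb{Q}_p)\backslash {\rm GL}_n(\Bbb{Q}_p)/C_p(p^m)$ are represented by elements of ${\rm GL}_n(\Bbb{Z}_p)$, so I may assume $\gamma_1,\ldots,\gamma_l\in {\rm GL}_n(\Bbb{Z}_p)$. Since $C_p(p^m)\triangleleft {\rm GL}_n(\Bbb{Z}_p)$, we have $\gamma_i^{-1}C_p(p^m)\gamma_i = C_p(p^m)$, and consequently $\xi(\gamma_i^{-1})f_{1,j}$ inherits $C_p(p^m)$-invariance from $f_{1,j}$. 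Its support is $P(\Bbb{Q}_p)C_p(p^m)\gamma_i = P(\Bbb{Q}_p)\gamma_i C_p(p^m)$, i.e.\ exactly the $i$-th double coset.

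Linear independence follows from support considerations: for distinct $i$ the supports are disjoint double cosets, and for fixed $i$ the values of $\xi(\gamma_i^{-1})f_{1,j}$ at $\gamma_i$ are scalar multiples of the $v_j$, which form a basis of $\sigma^{C_p(p^m)\cap M}$. Since there are $lr = \dim\xi^{C_p(p^m)}$ elements by Lemma~\ref{lm:dimensions_ps}, the family ${\rm B}(\xi^{C_p(p^m)})$ is a basis. The only mild subtlety is the bookkeeping around the Iwahori factorization and choosing $\gamma_i$ inside ${\rm GL}_n(\Bbb{Z}_p)$ so that conjugation preserves $C_p(p^m)$; once this is in place, every other step is a direct verification.
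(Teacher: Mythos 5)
Your proof is correct and follows the same route the paper intends — the paper's own proof is just the terse remark that well-definedness "follows since $C_p(p^m)$ has an Iwahori factorization with respect to $P$" and that "the rest is similarly straightforward," and you have simply written out that standard verification in full (Iwahori factorization for well-definedness, representatives $\gamma_i$ taken in ${\rm GL}_n(\Bbb{Z}_p)$ so normality of $C_p(p^m)$ preserves invariance under right translation, disjoint supports plus linear independence of the $v_j$ for independence, and the count from Lemma~\ref{lm:dimensions_ps}).
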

\begin{proof}
Showing that $f_{1,j}$ is well defined follows since $K_p(p^m)$ has an Iwahori factorization with respect to $P$. Verifying the rest of the statement is similarly straightforward. 
\end{proof}

Note that $\tilde{\xi} \cong {\rm Ind}_{P(\Bbb{Q}_p)}^{{\rm GL}_n(\Bbb{Q}_p)}(\tilde{\sigma})$. If we write $(\cdot, \cdot)\colon W\times \tilde{W}\to \Bbb{C}$ for the canonical pairing between $(\sigma,W)$ and its contragredient $(\tilde{\sigma}, \tilde{W})$, we obtain a canonical pairing between the induced representations given by
\begin{equation}
	(f,g) = \int_{K_p} (f(k),g(k))\dd k \nonumber
\end{equation}
for $f\in V(\sigma)$ and $g\in V(\tilde{\sigma})$. Let $\hat{v}_1,\ldots,\hat{v}_r$ be a basis of $\tilde{\sigma}^{K_p(p^m)}$,  dual to $v_1,\ldots, v_r$ with respect to $(\cdot,\cdot)$. Then we obtain a basis $\hat{f}_{i,j}$ with $1\leq i\leq l$ and $1\leq j\leq r$ of $V(\tilde{\sigma})^{K_p(p^m)}$ with
\begin{equation}
	(f_{i_1,j_1},\hat{f}_{i_2,j_2}) = \delta_{\substack{i_1=i_2, \,  j_1 = j_2}}.\nonumber
\end{equation}
If $\xi$ is tempered and irreducible, then we can replace the pairing $(\cdot,\cdot)\colon \sigma\times\tilde{\sigma}\to\Bbb{C}$ by an inner product on $W$. We then obtain an invariant inner product on $V(\sigma)$. In this case we can naturally identify $f_{i,j}$ with $\hat{f}_{i,j}$, so that the basis we constructed is orthonormal with respect to this inner product.

\begin{lemma} \label{lemma_mat_coeff}
Let $\xi$ be as above and consider the basis $(f_{i,j})_{\substack{1\leq j\leq r,\\1\leq i\leq l}}$ of $\xi^{K_p(p^m)}$ in the induced model. Then we have
\begin{equation}
	(\xi(u)f_{i,j},\hat{f}_{i,j}) = \frac{\#[P(\Bbb{Z}_p)/(K_p(p^m)\cap P(\Bbb{Q}_p))]^{\frac{1}{2}}}{{\rm Vol}(K_p(p^m),\dd k)^{\frac{1}{2}}}\int_{K_p(p^m)}(f_{1,j}( k'\gamma_iu\gamma_i^{-1}),\hat{v}_j)\dd k',\nonumber
\end{equation}
for $u\in U(\Bbb{Q}_p)$.
\end{lemma}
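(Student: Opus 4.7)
The plan is to reduce the matrix coefficient at the ``rotated'' vectors $f_{i,j}=\xi(\gamma_i^{-1})f_{1,j}$ to the one at the canonical vectors $f_{1,j}$, and then to unfold the integral using the explicit support condition.

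\smallskip

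First I would use the $G$-invariance of the canonical pairing $(\cdot,\cdot)\colon V(\sigma)\times V(\tilde\sigma)\to \Bbb{C}$, i.e.\ the identity $(\xi(h)f,\tilde{\xi}(h)g)=(f,g)$. Applying this with $h=\gamma_i$ to $(\xi(u)f_{i,j},\hat{f}_{i,j})=(\xi(u\gamma_i^{-1})f_{1,j},\tilde\xi(\gamma_i^{-1})\hat{f}_{1,j})$ yields
\begin{equation*}
(\xi(u)f_{i,j},\hat{f}_{i,j})=(\xi(\gamma_i u\gamma_i^{-1})f_{1,j},\hat{f}_{1,j})=\int_{K_p}\bigl(f_{1,j}(k\gamma_i u\gamma_i^{-1}),\hat{f}_{1,j}(k)\bigr)\dd k.
\end{equation*}

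\smallskip

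Next I would restrict the domain of integration using the support condition. Since $\hat{f}_{1,j}$ vanishes outside $P(\Bbb{Q}_p)C_p(p^m)$, we have $\hat{f}_{1,j}(k)=0$ for $k\in K_p$ unless $k\in K_p\cap P(\Bbb{Q}_p)C_p(p^m)=P(\Bbb{Z}_p)C_p(p^m)$. On this set, write $k=s k_0'$ with $s=n_s m_s\in N(\Bbb{Z}_p)M(\Bbb{Z}_p)=P(\Bbb{Z}_p)$ and $k_0'\in C_p(p^m)$; because $\delta_P$ is trivial on $M(\Bbb{Z}_p)$, the defining formulas give
\begin{equation*}
\hat{f}_{1,j}(sk_0')=C\,\tilde\sigma(m_s)\hat{v}_j,\qquad f_{1,j}(sk_0'\gamma_i u\gamma_i^{-1})=\sigma(m_s)f_{1,j}(k_0'\gamma_i u\gamma_i^{-1}),
\end{equation*}
where $C={\rm Vol}(P(\Bbb{Z}_p)C_p(p^m),\dd k)^{-1/2}$. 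The duality relation $(\sigma(m)v,\tilde\sigma(m)w)=(v,w)$ then makes the integrand independent of $s$, leaving $C\cdot\bigl(f_{1,j}(k_0'\gamma_i u\gamma_i^{-1}),\hat{v}_j\bigr)$.

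\smallskip

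Finally I would bookkeep the volumes. Setting $H=P(\Bbb{Z}_p)\cap C_p(p^m)$, the product map $P(\Bbb{Z}_p)\times C_p(p^m)\to P(\Bbb{Z}_p)C_p(p^m)$ is ${\rm Vol}(H)$-to-one, so that
\begin{equation*}
\int_{P(\Bbb{Z}_p)C_p(p^m)}F(k)\dd k=\frac{1}{{\rm Vol}(H)}\int_{P(\Bbb{Z}_p)}\!\!\int_{C_p(p^m)}F(sk_0')\dd s\,\dd k_0',
\end{equation*}
and ${\rm Vol}(P(\Bbb{Z}_p)C_p(p^m))={\rm Vol}(P(\Bbb{Z}_p)){\rm Vol}(C_p(p^m))/{\rm Vol}(H)$. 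Plugging these in and using $\#[P(\Bbb{Z}_p)/H]={\rm Vol}(P(\Bbb{Z}_p))/{\rm Vol}(H)$, the constant in front of the remaining $C_p(p^m)$-integral simplifies to $\#[P(\Bbb{Z}_p)/H]^{1/2}/{\rm Vol}(C_p(p^m))^{1/2}$, which is exactly the claimed prefactor.

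\smallskip

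The only real issue is conceptual rather than computational: one must justify that the canonical pairing is honestly $G$-invariant on the (possibly non-unitary) induced space (so that step~1 applies without restriction on $\xi$), and one must be careful with the normalizing square-root factors hidden in $C$ and in the definition of the $\hat{f}_{i,j}$. Everything else is a direct bookkeeping of Iwasawa-type factorizations of $k\in P(\Bbb{Z}_p)C_p(p^m)$.
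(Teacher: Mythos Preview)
Your proof is correct and follows essentially the same route as the paper: reduce to the pairing of $f_{1,j}$ with $\hat f_{1,j}$ via a change of variables $k\mapsto k\gamma_i$ (equivalently, invariance of the canonical pairing), restrict to $P(\Bbb{Z}_p)C_p(p^m)$ using the support of $\hat f_{1,j}$, and then unfold. The only cosmetic difference is that the paper writes the integral over $P(\Bbb{Z}_p)C_p(p^m)$ as a finite sum over $\delta\in P(\Bbb{Z}_p)/(C_p(p^m)\cap P)$ of integrals over $C_p(p^m)$, whereas you use a continuous $P(\Bbb{Z}_p)\times C_p(p^m)$ parametrization with the appropriate volume correction; the volume bookkeeping agrees exactly.
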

\begin{proof}
By the definition of the pairing $(\cdot,\cdot)$ in the induced picture we have
\begin{equation}
	(\xi(u)f_{i,j},\hat{f}_{i,j}) = \int_{K_p} ( f_{i,j}(ku),\hat{f}_{i,j}(k)) \dd k = \int_{K_p} ( f_{1,j}(k\gamma_iu\gamma_i^{-1}),\hat{f}_{1,j}(k)) \dd k.\nonumber
\end{equation}
We can write $k\in K_p$ as $k=\delta k'$ with $k'\in K_p(p^m)$ and a representative $\delta$ in $K_p/K_p(p^m)$. By the definition of $\hat{f}_1$ we find that $f_{1,j}(k) = f_{1,j}(\delta k')\neq 0$ unless $\delta\in P(\Bbb{Q}_p)K_p(p^m)$. Thus we get
\begin{align}
	(\xi(u)f_{i,j},\hat{f}_{i,j}) &= \sum_{\delta\in P(\Bbb{Z}_p)/(K_p(p^m)\cap P)}\int_{K_p(p^m)}( f_{1,j}(\delta k'\gamma_iu\gamma_i^{-1}),\hat{f}_{1,j}(\delta k'))\dd k' \nonumber \\ 
	&= \frac{\#[P(\Bbb{Z}_p)/(K_p(p^m)\cap P)]}{{\rm Vol}(P(\Bbb{Z}_p)K_p(p^m),\dd k)^{\frac{1}{2}}}\int_{K_p(p^m)}( f_{1,j}( k'\gamma_iu\gamma_i^{-1}),\hat{v}_j)\dd k' \nonumber \\
	&= \frac{\#[P(\Bbb{Z}_p)/(K_p(p^m)\cap P)]^{\frac{1}{2}}}{{\rm Vol}(K_p(p^m),\dd k)^{\frac{1}{2}}}\int_{K_p(p^m)}( f_{1,j}( k'\gamma_iu\gamma_i^{-1}),\hat{v}_j) \dd k'\nonumber
\end{align} 
and the proof is complete.
\end{proof}

By \cite[Proposition~2.3]{LM} the function $f(u)=(\xi(u)f_{i,j},\hat{f}_{i,j})\boldsymbol{\psi}_{\Bbb{Q}_p}^{\natural}(u)$ has a stable integral, so that we can define
\begin{equation}
	\mathcal{S}_{\xi, p^m}(D_q) = \sum_{\substack{1\leq i\leq l,\\ 1\leq j\leq r}} \int_{U(\Bbb{Q}_p)}^{{\rm st}}(\xi(u)f_{i,j},\hat{f}_{i,j})\boldsymbol{\psi}_{\Bbb{Q}_p}^{\natural}(u) du, \nonumber
\end{equation} 
which extends the definition of $\mathcal{S}_{\pi_p, p^m}(D_q)$ made for irreducible unitary representations $\pi_p$. Note that \cite[Proposition~2.3]{LM} is stated for irreducible representations $\xi$, however the proof does not require irreducibility.

We are now going to reduce the computation of $\mathcal{S}_{\xi,p^m}(D_q)$ to the corresponding averages $\mathcal{S}_{\tau_i,p^m}(D_q)$. For convenience we set $$\mathcal{S}_{\sigma,p^m}(D_q) =\mathcal{S}_{\tau_1,p^m}(D_q^{(n_1)})\cdot\ldots \cdot \mathcal{S}_{\tau_k,p^m}(D_q^{(n_k)}),$$ where $D_q=\text{diag}(D_q^{(n_1)},\ldots,D_q^{(n_k)})$ and $S_{\tau_i,p^m}(D_q^{(n_i)})=1$ if $\tau_i$ is a character (i.e.\ $n_i=1$). 

\begin{lemma}\label{lm:reduction}
In the notation above with $p^m\mid\mid q$ we have
\begin{equation}
	\mathcal{S}_{\xi,p^m}(D_q) =  \mathcal{S}_{\sigma, p^m}(D_q). \nonumber
\end{equation}
\end{lemma}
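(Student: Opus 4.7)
The plan is to substitute the explicit matrix coefficient formula from Lemma~\ref{lemma_mat_coeff} into the definition of $\mathcal{S}_{\xi,p^m}(D_q)$, and then use the Iwahori factorization of $C_p(p^m)$ together with the Bruhat decomposition to collapse the resulting expression to an integral over the unipotent radical of $M$ that factors across the Levi.

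First I would plug in Lemma~\ref{lemma_mat_coeff} to write each summand as an integral over $C_p(p^m)$ of $(f_{1,j}(k'\gamma_i u\gamma_i^{-1}),\hat v_j)$. The support condition on $f_{1,j}$ restricts $\gamma_i u\gamma_i^{-1}$ to $P(\Bbb{Q}_p)C_p(p^m)$. Since the $\gamma_i$ run over $P(\Bbb{Q}_p)\backslash {\rm GL}_n(\Bbb{Q}_p)/C_p(p^m)$, the combined sum--integral $\sum_i\int_{C_p(p^m)}$ can be reinterpreted, after exploiting the Iwahori factorization $C_p(p^m)=(C_p(p^m)\cap \bar N)\cdot (C_p(p^m)\cap M)\cdot (C_p(p^m)\cap N)$, as a single integral over a set of representatives of $P(\Bbb{Q}_p)\backslash {\rm GL}_n(\Bbb{Q}_p)$. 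Representatives for the open Bruhat cell may be chosen inside $\bar N(\Bbb{Q}_p)$, and together with $C_p(p^m)\cap \bar N$ they assemble to a full $\bar N(\Bbb{Q}_p)$-integration; the lower Bruhat strata contribute nothing to the stable integral by a standard Lapid--Mao type stabilization argument.

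Second, I would perform the Bruhat-cell change of variables: write $\gamma_i u\gamma_i^{-1}=\bar n\cdot m\cdot n$ with $\bar n\in\bar N(\Bbb{Q}_p)$, $m\in M(\Bbb{Q}_p)$, $n\in N(\Bbb{Q}_p)$, and use the $(P,\sigma)$-equivariance of $f_{1,j}$ to extract $\delta_P(m)^{1/2}\sigma(m)v_j$. Combining the $\bar N(\Bbb{Q}_p)$-integration from the first step with the $N(\Bbb{Q}_p)$-component of the $U(\Bbb{Q}_p)$-integration, and invoking the standard Jacquet integral identity (the $N$-integration against the twist $\boldsymbol{\psi}^{\natural}_{\Bbb{Q}_p}|_N$ kills everything off the open cell), leaves an integration only over the $M(\Bbb{Q}_p)$-component of $U(\Bbb{Q}_p)$, namely $U_M(\Bbb{Q}_p)$. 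What remains is precisely a matrix coefficient of $\sigma$ tested against $\boldsymbol{\psi}^{\natural}_{\Bbb{Q}_p}|_{U_M(\Bbb{Q}_p)}$.

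Third, because $D_q$ is block diagonal with blocks $D_q^{(n_i)}$, the character $\boldsymbol{\psi}^{\natural}_{\Bbb{Q}_p}|_{U_M(\Bbb{Q}_p)}$ factors across the factors ${\rm GL}_{n_i}(\Bbb{Q}_p)$, and so do $\sigma=\tau_1\otimes\ldots\otimes\tau_k$ and an orthonormal basis of $\sigma^{C_p(p^m)}=\tau_1^{C_p(p^m)}\otimes\ldots\otimes\tau_k^{C_p(p^m)}$. Therefore summing over $j$ factors the $U_M$-integral into $\prod_i \mathcal{S}_{\tau_i,p^m}(D_q^{(n_i)})=\mathcal{S}_{\sigma,p^m}(D_q)$.

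The main obstacle will be verifying that all volume normalizations cancel exactly. In particular, the factor $\#[P(\Bbb{Z}_p)/(C_p(p^m)\cap P)]^{1/2}/{\rm Vol}(C_p(p^m))^{1/2}$ from Lemma~\ref{lemma_mat_coeff}, together with the Jacobian produced by the Iwahori factorization and the change of variable to Bruhat big-cell coordinates $\bar n\cdot m\cdot n$, must collapse to exactly $1$. This is essentially a careful bookkeeping exercise, but one that requires precise control over all local measures and over the effect of $D_q$-conjugation on the character. A secondary delicate point is justifying the interchange of the stable integral with the sum over double-coset representatives, which relies on the full force of the stability theory for matrix coefficient integrals developed in the references already cited in the paper.
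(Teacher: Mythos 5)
The overall plan — reduce to the open Bruhat cell and then factor the matrix-coefficient integral across the Levi — matches the paper, but there is a genuine gap at the step you treat most casually. You assert that ``the lower Bruhat strata contribute nothing to the stable integral by a standard Lapid--Mao type stabilization argument.'' This is not a stabilization phenomenon: the Lapid--Mao result (\cite[Proposition~2.3]{LM}) only guarantees that the stable integral $\int^{\rm st}_{U(\Bbb{Q}_p)}$ is well-defined, and it says nothing about which double-coset representatives $\gamma_i$ give a vanishing contribution. In fact the paper's remark after Lemma~\ref{lm:reduction} makes exactly this point: if $\xi$ were unitary one could simply discard the unwanted $(i,j)$ by positivity, but in the non-unitary case (needed for non-supercuspidal square-integrables) the vanishing of the contributions off the open cell has to be \emph{proved}. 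The actual argument is a conductor computation specific to the choice $y=D_q$: for each simple root $\alpha$ one has $\boldsymbol{\psi}_{\Bbb{Q}_p}^{\natural}|_{U_{\alpha}(p^{m}\Bbb{Z}_p)}\equiv 1$ but $\boldsymbol{\psi}_{\Bbb{Q}_p}^{\natural}|_{U_{\alpha}(p^{m-1}\Bbb{Z}_p)}\not\equiv 1$, and for $\gamma_i$ in a non-open Bruhat cell one shows directly that $\gamma_i u_{\alpha}\gamma_i^{-1}\in N(\Bbb{Z}_p)+p^m M_n(\Bbb{Z}_p)$, so the matrix coefficient $(\xi(\cdot)f_{i,j},\hat f_{i,j})$ is right-invariant under a conjugate of $U_{\alpha}(p^{m-1}\Bbb{Z}_p)$ and the twisted integral vanishes. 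This equality would be false for $\mathcal{S}_{\xi,p^m}(y)$ at a generic $y$, so no argument that is agnostic to $y=D_q$ can work. Your proposal also mislabels the ``Jacquet integral identity'' --- that machinery lives in the Whittaker model, not in a matrix-coefficient integral, and supplying the needed vanishing there would face the same issue.

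A secondary inaccuracy: you describe the volume bookkeeping as the main obstacle and expect all factors to collapse to $1$; in the paper the factor is $\text{Vol}(N^{\op}\cap C_p(p^m), \dd n^{\op})$, and it is the identity $j_0\cdot\text{Vol}(N^{\op}\cap C_p(p^m), \dd n^{\op})=1$ (with $j_0=\#N^{\op}(\Bbb{Z}_p/p^m\Bbb{Z}_p)$ the number of open-cell representatives) that restores the correct normalization after summing over the $j_0$ surviving $i$'s. Finally, the passage to $\mathcal{S}_{\sigma,p^m}(D_q)$ requires the conjugation by $w_M$ (the integral lands on $U\cap M^{\op}$ with $\sigma^{\op}$), and one must note $\mathcal{S}_{\sigma^{\op},p^m}(D_q)=\mathcal{S}_{\sigma,p^m}(D_q)$; your statement that the character ``factors across the factors'' elides this.
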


\begin{proof}
Let $P^{\op}=M^{\op}N^{\op}$ denote the parabolic opposite to  $P=MN$, and set $T=T_P = P\cap P^{\op}$. We have $P^{\top}=MN^{\top}$.  Given a representation $\sigma=\tau_1\otimes \ldots\otimes \tau_k$ we write $\sigma^{\op}=\tau_k\otimes \ldots\otimes \tau_1$ for the natural representation on $P^{\op}$. There is a Weyl element $w_P$ such that $w_P^{-1} Mw_P = M^{\op}$ and $w_P^{-1}Nw_P=(N^{\op})^{\top}$. The long Weyl element would do the job but we can even require that
\begin{equation}
	\sigma(m) = \sigma^{\op}(w_P^{-1}mw_P) \text{ for all }m\in M(\Bbb{Q}_p).\nonumber
\end{equation}

Our first goal is to show that the $U$-integral vanishes for many $i$. In the end only $1\leq i\leq l$ where $\gamma_i$ is of a very nice shape will contribute. Note that this behavior is special to $D_q$. Indeed we will use the following fact. For each simple root $\alpha$ we have $$\boldsymbol{\psi}_{\Bbb{Q}_p}^{\natural}(\, \cdot \,)\vert_{U_{\alpha}(p^{m-1}\Bbb{Z}_p)}\not \equiv 1\quad  \text{ and } \quad \boldsymbol{\psi}_{\Bbb{Q}_p}^{\natural}(\, \cdot \,)\vert_{U_{\alpha}(p^{m}\Bbb{Z}_p)}\equiv 1.$$
In particular, if there is a simple root $\alpha$ so that the matrix coefficient $(\pi(\cdot )f_{i,j},\hat{f}_{i,j})$ is right (or left) $(u')^{-1}U_{\alpha}(p^{m-1}\Bbb{Z}_p)u'$-invariant for some $u'\in U$, then the $U$-integral will vanish for this $i$. We will show this invariance property for some of the $i$'s. Recall that $\gamma_1,\ldots,\gamma_l$ are representatives for $P(\Bbb{Q}_p)\backslash {\rm GL}_n(\Bbb{Q}_p)/K_p(p^m)$, which we can choose to lie inside  $K_p$.

Note that $f_{1,j}$ is $N(\Bbb{Z}_p)$-invariant. Turning to the more general case, we take $\gamma_i\in K_p$ and consider the map 
\begin{equation}
	K_p \to K_p/K_p(p) = {\rm GL}_n(\Bbb{Z}/p\Bbb{Z}) = \bigcup_{w\in W^M\backslash W}P(\Bbb{Z}/p\Bbb{Z})wU(\Bbb{Z}/p\Bbb{Z})\nonumber
\end{equation}
and write $\overline{\gamma}_i$ for the image of $\gamma_i$. Suppose $\overline{\gamma}_i\in P(\Bbb{Z}/p\Bbb{Z})wU(\Bbb{Z}/p\Bbb{Z})$ with $w\neq w_M$, so that we can assume that $\gamma_i = wu_i+pA$  for $u_i\in U(\Bbb{Z}_p)$ and $A\in M_{n}(\Bbb{Z}_p)$. There is a simple root $\alpha$ with $U_{\alpha}\subset U\cap w^{-1} Nw$. We take $u_iu_{\alpha}u_i^{-1}\in U_{\alpha}(p^{m-1}\Bbb{Z}_p)$. Recall that $\pi(u_{\alpha})f_{i,j} = \pi(u_{\alpha}\gamma_i^{-1})f_{1,j}$. Thus in order to see that $f_{i,j}$ is $U_{\alpha}(p^{m-1}\Bbb{Z}_p)$-invariant it is enough to show that $\gamma_iu_{\alpha}\gamma_{i}^{-1}\in N(\Bbb{Z}_p)+p^mM_n(\Bbb{Z}_p)$. To see this we compute
\begin{align}
	\gamma_iu_{\alpha}\gamma_i^{-1} &= wu_iu_{\alpha}u_i^{-1}w^{-1} + pAu_{\alpha}u_i^{-1}w^{-1}+pwu_iu_{\alpha}A+p^2Au_{\alpha}A' \nonumber \\
	&\in wu_iu_{\alpha}u_i^{-1}w^{-1} + pAu_i^{-1}w^{-1}+pwu_iA+p^2AA'+p^mM_n(\Bbb{Z}_p)\nonumber \\
	&\subseteq wu_iu_{\alpha}u_i^{-1}w^{-1}+p^m M_n(\Bbb{Z}_p). \nonumber
\end{align}
Here we used $u_{\alpha} \in 1+p^{m-1}M_n(\Bbb{Z}_p)$ and $$1 =(wu_i+pA)(u_i^{-1}w^{-1}+pA')  = 1+pAu_i^{-1}w^{-1}+pwu_iA'+p^2AA'.$$ However, our choice of $u_{\alpha}$ ensures that $wu_iu_{\alpha}u_i^{-1}w^{-1}\in N(\Bbb{Z}_p)$. Thus we have seen that $u_{\alpha}\gamma_i^{-1} \in \gamma_i^{-1} N(\Bbb{Z}_p)K_p(p^m)$,  so that $f_{i,j}$ is $u_i^{-1}U_{\alpha}(p^{m-1}\Bbb{Z}_p)u_i$-invariant and the corresponding $U$-integral vanishes. In other words $\gamma_i$ does not contribute to $\mathcal{S}_{\pi,p^m}(D_q)$.

We have seen that only representatives $\gamma_i$ of $P(\Bbb{Q}_p)\backslash {\rm GL}_n(\Bbb{Q}_p)/K_p(p^m)$ of the shape $\gamma_i\in w_MN(\Bbb{Z}_p)K_p(p)$ contribute to the $i$-sum in $\mathcal{S}_{\pi,p^m}(D_q)$. However, due to the Iwahori factorisation of $K_p(p)$ we can write them as $\gamma_i\in P(\Bbb{Z}_p)w_MN(\Bbb{Z}_p)$. Thus we have seen that only $i$ with $\gamma_i\in P(\Bbb{Z}_p) w_MU(\Bbb{Z}_p)/K_p(p^m)$ contribute. Therefore we can put $j_0 = \# N^{\op}(\Bbb{Z}_p/p^m\Bbb{Z}_p)$ and consider only the representatives $\gamma_1,\ldots,\gamma_{j_0}$ of the form
\begin{equation}
	\gamma_i = w_M n_i \text{ with } n_i\in N^{\op}(\Bbb{Z}_p/p^m\Bbb{Z}_p). \nonumber
\end{equation}
Here we used the identification 
\begin{displaymath}
\begin{split}
P(\Bbb{Z}_p)\backslash P(\Bbb{Z}_p)w_MU(\Bbb{Z}_p)/K_p(p^m) &= P(\Bbb{Z}_p)\backslash P(\Bbb{Z}_p)w_MP^{\op}(\Bbb{Z}_p)/K_p(p^m)\\
&\cong w_MN^{\op}(\Bbb{Z}_p/p^m\Bbb{Z}_p).
\end{split}
\end{displaymath}
	
For these representatives we can compute the $U$-integral starting from the expression for the matrix coefficients given in Lemma~\ref{lemma_mat_coeff}. Note that a change of variables $u\mapsto n_i^{-1}un_i$ leaves the character invariant.  Thus we can assume without loss of generality that $n_i=n_1=1$ getting
\begin{align}
	&\int_{U(\Bbb{Q}_p)}^{{\rm st}}(\xi(u)f_{i,j},\hat{f}_{i,j})\boldsymbol{\psi}_{\Bbb{Q}_p}^{\natural}(u)^{-1}\dd u \nonumber \\
	&  = \int_{U(\Bbb{Q}_p)}^{{\rm st}} \hspace{-0.3cm} \frac{\#[P(\Bbb{Z}_p)/(K_p(p^m)\cap P(\Bbb{Q}_p))]^{\frac{1}{2}}}{{\rm Vol}(K_p(p^m),\dd k)^{\frac{1}{2}}}\int_{K_p(p^m)}\hspace{-0.3cm}(f_{1,j}( k'\gamma_iu\gamma_i^{-1}),\hat{v}_j)\dd k'\,\boldsymbol{\psi}_{\Bbb{Q}_p}^{\natural}(u)^{-1}\dd u \nonumber \\
	&  =\int_{U(\Bbb{Q}_p)}^{{\rm st}}\hspace{-0.3cm}\frac{\#[P(\Bbb{Z}_p)/(K_p(p^m)\cap P(\Bbb{Q}_p))]^{\frac{1}{2}}}{{\rm Vol}(K_p(p^m),\dd k)^{\frac{1}{2}}}\int_{K_p(p^m)}\hspace{-0.3cm}(f_{1,j}( k'w_Muw_M^{-1}),\hat{v}_j)\dd k'\,\boldsymbol{\psi}_{\Bbb{Q}_p}^{\natural}(u)^{-1}\dd u \nonumber \\ 
	& = \int_{U(\Bbb{Q}_p)}^{{\rm st}}(\xi(u)f_{1,j},\hat{f}_{1,j})\boldsymbol{\psi}_{\Bbb{Q}_p}^{\natural}(u)^{-1}\dd u.  \nonumber
\end{align}
Thus the integral is independent of $1\leq i\leq j_0$. In summary, 
\begin{align}
&	\mathcal{S}_{\xi, p^m}(D_q) = \sum_{\substack{1\leq i\leq l\\ 1\leq j\leq r}} \int_{U(\Bbb{Q}_p)}^{{\rm st}}(\xi(u)f_{i,j},\hat{f}_{i,j})\boldsymbol{\psi}_{\Bbb{Q}_p}^{\natural}(u) \dd u \nonumber\\
	&= \sum_{\substack{1\leq i\leq j_0\\ 1\leq j\leq r}} \int_{U(\Bbb{Q}_p)}^{{\rm st}}(\xi(u)f_{i,j},\hat{f}_{i,j})\boldsymbol{\psi}_{\Bbb{Q}_p}^{\natural}(u) \dd u= j_0  \sum_{1\leq j\leq r} \int_{U(\Bbb{Q}_p)}^{{\rm st}}(\xi(u)f_{1,j},\hat{f}_{1,j})\boldsymbol{\psi}_{\Bbb{Q}_p}^{\natural}(u) \dd u. \nonumber
\end{align}

In order to compute the remaining integrals we consider the condition $\gamma_1k'u\gamma_1^{-1} = pk\in P(\Bbb{Q}_p)K_p(p^m)$, which can be rewritten as	
\begin{equation}	
	k'u = q\tilde{k}\in (P^{\op})^{\top}(\Bbb{Q}_p)K_p(p^m), \,\, q=w_M^{-1}pw_M\in (P^{\op})^{\top}\Bbb{Q}_p, \,\,\tilde{k}=w_M^{-1} kw_M\in K_p(p^m).\nonumber
\end{equation}
At this point we consider the Iwahori decomposition $k'=n_-tn_+$ with $n_-\in (N^{\op})^{\top}(\Bbb{Q}_p)\cap K_p(p^m)$, $t\in M^{\op}(\Bbb{Q}_p)\cap K_p(p^m)$ and $n_+\in N^{\op}(\Bbb{Q}_p)\cap K_p(p^m)$. Inserting this in our condition we obtain $n_+u \in (P^{\op})^{\top}(\Bbb{Q}_p)K_p(p^m)$. We can now suppose without loss of generality that $u=u_{N^{\op}}\cdot  u_{M^{\op}}$ with $u_{N^{\op}}\in N^{\op}$ and $u_{M^{\op}}\in U\cap M^{\op}$. We obtain the condition $u_{N^{\op}}\in K_p(p^m)$. This leads to
\begin{equation}
	f_{1,j}( \gamma_1^{-1}k'u_{N^{\op}}u_{M^{\op}}\gamma_1) = {\rm Vol}(P(\Bbb{Z}_p)K_p(p^m),\dd k)^{-\frac{1}{2}}\delta_{u_{N^{\op}}\in K_p(p^m)} \sigma(w_Mu_{M^{\op}}w_M^{-1})v_j.\nonumber
\end{equation}
	
We now insert this observation in our formula for the matrix coefficient and get
\begin{equation}
	(\xi(u_{N^{\op}}u_{M^{\op}})f_{1,j},\hat{f}_{1,j}) = \delta_{u_{N^{\op}}\in K_p(p^m)}(\sigma(w_Mu_{M^{\op}}w_M^{-1})v_j,\hat{v}_j). \nonumber 
\end{equation}
Factor $U=(U\cap N^{\op})\cdot (U\cap M^{\op})$ and note that this factorization induces a factorization of the measures. We thus have
\begin{displaymath}
\begin{split}
&	\int_{U(\Bbb{Q}_p)}^{{\rm st}} (\xi(u)f_{1,j},\hat{f}_{1,j})\boldsymbol{\psi}_{\Bbb{Q}_p}^{\natural}(u)\dd u \\ &= {\rm Vol}(N^{\op}\cap K_p(p^m), \dd n^{\op})\int_{U(\Bbb{Q}_p)\cap M^{\op}(\Bbb{Q}_p)}^{{\rm st}} (\sigma(w_Pu'w_P^{-1})v_j,\hat{v}_j)\boldsymbol{\psi}_{\Bbb{Q}_p}^{\natural}(u')\dd u'.\nonumber
	\end{split}
\end{displaymath}
	
We now compute
\begin{equation}
	S_{\xi,p^m}(D_q) = j_0\cdot{\rm Vol}(N^{\op}\cap K_p(p^m), \dd n^{\op})\sum_{j=1}^r\int_{U\cap M^{\op}}^{{\rm st}} (\sigma^{\op}(u')v_j,\hat{v}_j)\boldsymbol{\psi}_{\Bbb{Q}_p}^{\natural}(u')\dd u' . \nonumber
\end{equation}
Since the definition of $\mathcal{S}_{\sigma,p^m}(D_q)$ is independent of the bases of $\tau_i^{K_p(p^m)}$ we recognize this as
\begin{equation}
	\sum_{j=1}^r\int_U^{{\rm st}}(\pi(u)f_{1,j},\hat{f}_{1,j})\boldsymbol{\psi}_{\Bbb{Q}_p}^{\natural}(u)\dd u \\ = {\rm Vol}(N^{\op}\cap K_p(p^m), \dd n^{\op})\mathcal{S}_{\sigma^{\op},p^m}(D_q).\nonumber
\end{equation}
Note that $\mathcal{S}_{\sigma^{\op},p^m}(D_q) = \mathcal{S}_{\sigma,p^m}(D_q)$. Since the same formula holds for any $i$ in place of $1$, we conclude by summing over $1\leq i\leq j_0$ and realising that $j_0={\rm Vol}(N^{\op}(\Bbb{Q}_p)\cap K_p(p^m), \dd n^{\op})^{-1}.$
\end{proof}

\emph{Remark:} The previous  lemma  allows us to reduce the computation of $\mathcal{S}_{\pi_p,p^m}$ to the case of supercuspidal representations.  Note that if $\xi$ is unitary, then the integrals $$I_{i,j}=\int_{U(\Bbb{Q}_p)}^{{\rm st}}(\xi(u)f_{i,j},f_{i,j})\boldsymbol{\psi}_{\Bbb{Q}_p}^{\natural}(u)\dd u$$ are all positive. Thus in this case one can simply drop all the tuples $(i,j)$ from the definition of $\mathcal{S}_{\xi,p^m}$, where $\gamma_i$ does not lie in $P(\Bbb{Q}_p)w_MN(\Bbb{Z}_p)K_p(p^m)$. The same argument as above then yields the lower bound
\begin{equation} \label{example}
	\mathcal{S}_{\xi,p^m}(D_q) \geq S_{\sigma,p^m}(D_q). 
\end{equation}
However, for our treatment of (non-supercuspidal) square integrable representations $\pi_p$ we need to consider non-unitary $\xi$. In this case the positivity of the integrals $I_{i,j}$ is not guaranteed, so that it is impossible to simply drop unwanted $i$'s from the sum. This motivates the additional work in the proof that went into actually proving equality in \eqref{example}, which is essential. Note that this equality will not hold for $S_{\xi,p^m}(y)$ in general. Indeed it is important in the proof that $\boldsymbol{\psi}_{\Bbb{Q}_p}^{\natural}$ is trivial on $U(\Bbb{Q}_p)\cap K_p(p^m)$ but non-trivial on $U(\Bbb{Q}_p)\cap K_p(p^{m-1})$.

\begin{lemma}
Let $q$ be a positive integer with $p^m\mid\mid q$. Suppose \eqref{local_assumption_1} holds for $m\in \Bbb{N}$ and for all supercuspidal representations of ${\rm GL}_l(\Bbb{Q}_p)$ with $2\leq l\leq n$. Then  we have 
\begin{equation}
	\mathcal{S}_{\pi_p,p^m}(D_q)\gg 1,
\end{equation}
for all smooth irreducible unitary representations $\pi_p$ of ${\rm GL}_n(\Bbb{Q}_p)$. In particular \eqref{local_assumption_1} holds unconditionally for $m=1$.
\end{lemma}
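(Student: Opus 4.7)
The plan is to argue by induction on $n$, using the classification in Lemma~\ref{lm:class} to split $\pi_p$ into three cases and invoking Lemma~\ref{lm:reduction} as the key reduction tool. By Lemma~\ref{lm:class}, any irreducible unitary $\pi_p$ with $\pi_p^{C_p(p^m)}\neq \{0\}$ is either (1a) supercuspidal, (1b) a generalized Steinberg $\text{St}(\tau,d)$ with $\tau$ supercuspidal on ${\rm GL}_{n/d}(\Bbb{Q}_p)$, or (2) a genuine parabolic induction $\text{Ind}_{P(\Bbb{Q}_p)}^{{\rm GL}_n(\Bbb{Q}_p)}(\tau_1\otimes\cdots\otimes\tau_k)$ with $k\geq 2$.

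Case 1a is the hypothesis of the lemma. For Case 2, Lemma~\ref{lm:reduction} immediately gives
\[
\mathcal{S}_{\pi_p,p^m}(D_q) = \prod_{i=1}^k \mathcal{S}_{\tau_i,p^m}\bigl(D_q^{(n_i)}\bigr),
\]
and each $\tau_i$ is an essentially square integrable representation of ${\rm GL}_{n_i}(\Bbb{Q}_p)$ with $n_i<n$; after an unramified twist making it unitary, each factor is $\gg 1$ by the induction hypothesis.

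For Case 1b the idea is to realize $\pi_p=\text{St}(\tau,d)$ as the unique generic subquotient of the non-unitary standard module
\[
\xi := \text{Ind}_{P(\Bbb{Q}_p)}^{{\rm GL}_n(\Bbb{Q}_p)}\Bigl(\tau|\det|^{(d-1)/2}\otimes\tau|\det|^{(d-3)/2}\otimes\cdots\otimes\tau|\det|^{-(d-1)/2}\Bigr),
\]
and to exploit that Lemma~\ref{lm:reduction}, as the remark following it observes, does not require unitarity of $\xi$. Applying the lemma yields $\mathcal{S}_{\xi,p^m}(D_q) = \prod_j \mathcal{S}_{\tau|\det|^{s_j},p^m}\bigl(D_q^{(n/d)}\bigr)$. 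Since the stable integral defining $\mathcal{S}$ depends only on the character $\boldsymbol{\psi}_{\Bbb{Q}_p}^{\natural}$, it is invariant under unramified twists of $\tau$, so each factor equals $\mathcal{S}_{\tau,p^m}(D_q^{(n/d)})$, which is $\gg 1$ by the supercuspidal hypothesis (or trivially $=1$ when $\tau$ is a character). One then transfers the bound from $\xi$ to $\pi_p$ using that the stable integral factors through the Whittaker functional of $\xi$, and that the only irreducible constituent of $\xi$ admitting a non-zero Whittaker model is the generic one, namely $\pi_p$.

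The main obstacle is this final transfer step in Case 1b: one must argue carefully that despite $\pi_p$ being a proper subquotient of the non-unitary $\xi$, the bases of $\xi^{C_p(p^m)}$ and $\pi_p^{C_p(p^m)}$ can be matched so that the non-generic constituents contribute zero to the stable integral, by uniqueness of the generic piece within the Bernstein-Zelevinsky segment. For the unconditional statement at $m=1$: by Lemma~\ref{lm:class} any supercuspidal of ${\rm GL}_l(\Bbb{Q}_p)$ with $C_p(p)$-fixed vectors has $c(\pi_p)\leq l$ and is therefore of depth zero, so Lemma~\ref{lm:sc_all_n} verifies the hypothesis of the lemma in the required range, and the conclusion follows unconditionally in that case.
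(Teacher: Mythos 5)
Your proposal follows the paper's proof essentially verbatim: the case split from Lemma~\ref{lm:class}, the application of Lemma~\ref{lm:reduction} to the non-unitary standard module $\xi$ in the Steinberg case, and the reduction of Case~2 to square integrables via the Langlands classification and Lemma~\ref{lm:reduction}. The transfer step you flag as the main obstacle is closed in the paper by invoking \cite[Propositions~2.3 and~2.8]{LM}, which show that the stable bilinear form $(f,f^{\vee})\mapsto \int_U^{\rm st}(\xi(u)f,f^{\vee})\boldsymbol{\psi}_{\Bbb{Q}_p}^{\natural}(u)\,\dd u$ descends to a pairing between the one-dimensional twisted Jacquet modules of $\xi$ and $\tilde{\xi}$ (which are supported on the generic subquotients $\pi_p$, $\tilde{\pi}_p$), so that $\mathcal{S}_{\xi,p^m}(D_q)=\mathcal{S}_{\pi_p,p^m}(D_q)$ by basis-independence of $\mathcal{S}$.
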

\begin{proof}
We start with the case of essentially square integrable representations. In this case there is $d\mid n$ and a supercuspidal representation $\tau$ of ${\rm GL}_{\frac{n}{d}}(\Bbb{Q}_p)$ so that $\pi_p = {\rm St}_n(\tau)$. In particular $\pi_p$ is the unique irreducible submodule of $\xi = {\rm Ind}_P^{{\rm GL}_n}(\vert \cdot\vert^{\frac{d-1}{2}}\cdot \tau\otimes \ldots \otimes \vert \cdot\vert^{-\frac{d-1}{2}}\cdot \tau)$, where $P$ is the parabolic subgroup associated to the partition $(\frac{n}{d},\ldots, \frac{n}{d})$. We write $\sigma = \vert \cdot\vert^{\frac{d-1}{2}}\cdot \tau\otimes \ldots \otimes \vert \cdot\vert^{-\frac{d-1}{2}}\cdot \tau$. Note that $\xi$ is not unitary itself. The lower bound for $\mathcal{S}_{\pi_p,p^m}(D_q)$ will follow from the assumption and Lemma~\ref{lm:reduction} once we have seen that
\begin{equation}
	\mathcal{S}_{\pi_p,p^m}(D_q) = \mathcal{S}_{\xi,p^m}(D_q).\nonumber
\end{equation}
Note that $\pi$ (resp.\ $\tilde{\pi}$) is the unique generic sub-quotient of $\xi$ (resp $\tilde{\xi}$). Thus the claim follows since $\mathcal{S}_{\xi,p^m}(D_q)$ is independent of the choice of basis and the pairing  $\xi\times \tilde{\xi}\to \Bbb{C}$ given by $$ (f,f^{\vee})\mapsto \int_{U(\Bbb{Q}_p)}^{{\rm st}}(\pi(u)f,f^{\vee})\boldsymbol{\psi}_{\Bbb{Q}_p}^{\natural}(u)du$$ descends to a pairing between the twisted Jacquet modules $\mathbf{j}_{\boldsymbol{\psi}_{\Bbb{Q}_p}^{\natural}}(\xi)$ and $\mathbf{j}_{(\boldsymbol{\psi}_{\Bbb{Q}_p}^{\natural})^{-1}}(\tilde{\xi})$. This is given in \cite[Proposition~2.3]{LM} and made more precise in \cite[Proposition~2.8]{LM}.

According to the Langlands classification it remains to consider irreducible representations (parabolically) induced from square-integrable representations. More precisely we can assume that $\pi_p = {\rm Ind}_{P(\Bbb{Q}_p)}^{{\rm GL}_n(\Bbb{Q}_p)}(\sigma)$, where $P =MN$ is a standard parabolic subgroup and $\sigma$ is an (essentially) square integrable representation of $M(\Bbb{Q}_p)$. However, since we have established the result for square integrable representations, it holds for $\sigma$. We conclude by applying Lemma~\ref{lm:dimensions_ps} and Lemma~\ref{lm:reduction} respectively.
\end{proof} 
   
This completes the proof of Theorem \ref{thm15}, since we have established \eqref{local_assumption_1} and \eqref{local_assumption_2} in all relevant cases and can therefore apply Proposition~\ref{prop:global_to_local}. Note that the assumption $q$ squarefree is only used in the treatment of supercuspidal representations. Indeed we crucially use that if $q$ is squarefree only depth-zero supercuspidal representations are relevant. On the other hand the argument reducing to the case of supercuspidal representations is completely general.

\subsection{The final results}
    
Dropping everything but the cuspidal part from Proposition \ref{density-kuz} and combining the resulting inequality with Theorem \ref{thm15} and \eqref{vqnq}, we obtain an unweighted version of Proposition \ref{density-kuz} for the cuspidal spectrum. At this point there is no difference between the groups $\Gamma(q)$ and $\Gamma(q)^{\natural}$.
\begin{cor}\label{cor-kuz} There exists an absolute constant $K > 0$ with the following property. Let $M, T > 1$, $q$ squarefree and suppose that $T \leq M^{-K} q^{n+1}$. Fix a place $v$ of $\Bbb{Q}$.  If $v = p$ is finite, assume that $p \nmid q$. 
Then  
$$ \sum_{\substack{\varpi  \in {\rm ONB}(L^2_{{\rm cusp}}(X(q))\\ \| \mu_{\varpi} \| \leq M}} T^{2 \sigma_{\varpi,v}}    \ll_{v, \varepsilon} M^{K} q^{\varepsilon}  \mathcal{V}_q,$$ where ${\rm ONB}$ stands for any orthonormal basis of eigenforms for the ring of invariant differential operators and the Hecke algebra at $v$ (if $v$ is finite).
\end{cor}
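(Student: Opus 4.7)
The plan is to deduce the corollary from Proposition \ref{density-kuz} by inserting the lower bound of Theorem \ref{thm15} to remove the Fourier-coefficient weight $|A_{\varpi}(1)|^2$, and then to transfer the resulting inequality from $\Gamma(q)^{\natural}$ to $\Gamma(q)$ via conjugation by $D_q$.

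First I would restrict the integral on the left-hand side of Proposition \ref{density-kuz} to the cuspidal part of the spectral decomposition; this is legitimate because $|A_{\varpi}(1)|^2 T^{2\sigma_{\varpi,v}} \geq 0$. Using the refined decomposition \eqref{sum-prime},
\[
(X_q)_{\rm cusp} = \sideset{}{'}\bigoplus_{\pi \mid X_q} V_\pi,
\]
and choosing any orthonormal basis inside each $V_\pi$, the left-hand side becomes
\[
\sideset{}{'}\sum_{\pi \mid X_q,\,\|\mu_{\pi_\infty}\|\leq M} T^{2\sigma_{\pi_v}} \sum_{\varpi \in \mathrm{ONB}(V_\pi)} |A_\varpi(1)|^2,
\]
where I used the crucial point that $\sigma_{\varpi,v}$ is constant on $V_\pi$ (it is determined by the unramified local factor $\pi_v$, which makes sense because $v\nmid q$).

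Next, Theorem \ref{thm15} applied to each $\pi$ (valid because $q$ is squarefree) yields
\[
\sum_{\varpi \in \mathrm{ONB}(V_\pi)} |A_\varpi(1)|^2 \gg (\|\mu_{\pi_\infty}\| q)^{-\varepsilon}\,\dim V_\pi\cdot \frac{\mathcal{N}_q}{\mathcal{V}_q}.
\]
Combined with Proposition \ref{density-kuz}, this gives
\[
M^{-\varepsilon} q^{-\varepsilon} \frac{\mathcal{N}_q}{\mathcal{V}_q}\,\sideset{}{'}\sum_{\pi,\,\|\mu_{\pi_\infty}\|\leq M} T^{2\sigma_{\pi_v}} \dim V_\pi \ll_{v,\varepsilon} M^K q^{\varepsilon}\mathcal{N}_q,
\]
which rearranges (after renaming $\varepsilon$) to
\[
\sideset{}{'}\sum_{\pi\mid X_q,\,\|\mu_{\pi_\infty}\|\leq M} T^{2\sigma_{\pi_v}}\dim V_\pi \ll_{v,\varepsilon} M^{K}q^{\varepsilon}\mathcal{V}_q.
\]
Since every element of any orthonormal basis of $V_\pi$ is an eigenform for the ring of invariant differential operators and for the Hecke algebra at $v$ (these operators act on $V_\pi$ by the scalars determined by $\pi_\infty$ and $\pi_v$), the left side equals $\sum_{\varpi \in \mathrm{ONB}((X_q)_{\rm cusp})} T^{2\sigma_{\varpi,v}}$ with $\|\mu_\varpi\|\leq M$, for any orthonormal basis of joint eigenforms.

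Finally I would transfer this to $\Gamma(q)$ itself. Conjugation by $D_q$ gives a bijection $f\mapsto f(D_q \cdot D_q^{-1})$ between $L^2(\Gamma(q)\backslash {\rm SL}_n(\Bbb{R})/{\rm SO}_n(\Bbb{R}))$ and $X_q = L^2(\Gamma(q)^{\natural}\backslash {\rm SL}_n(\Bbb{R})/{\rm SO}_n(\Bbb{R}))$. This map is an isometry (the Haar measure is conjugation-invariant on ${\rm SL}_n(\Bbb{R})$), it commutes with right translation so preserves the action of the archimedean invariant differential operators, and it commutes with the Hecke algebra at every prime $p\nmid q$; in particular it identifies the spectral parameters $\mu_{\varpi,v}$ at every place $v\nmid q$ and sends cusp forms to cusp forms. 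The sum in the statement of the corollary therefore equals the sum we just bounded, completing the proof. I do not anticipate any real obstacle here: all substantive work has been done in Proposition \ref{density-kuz} and Theorem \ref{thm15}, and what remains is the bookkeeping of the twist-quotient in $\sideset{}{'}\bigoplus$ and the identification of the ONB sum across the two congruence subgroups.
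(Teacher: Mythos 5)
Your proposal takes essentially the same route as the paper, which compresses the proof into one sentence: drop the non-cuspidal spectrum, feed Theorem \ref{thm15} into Proposition \ref{density-kuz}, and observe that the answer is the same for $\Gamma(q)$ and $\Gamma(q)^{\natural}$. Steps 1--6 of your argument correctly unpack this, including the key observations that $A_\varpi(M)=0$ on the residual spectrum and that $\sigma_{\varpi,v}=\sigma_{\pi_v}$ is constant on $V_\pi$, so positivity lets you pass from the weighted cuspidal integral to the unweighted one.

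The one place that needs repair is the transfer step. The classical map $f\mapsto f(D_q\,\cdot\,D_q^{-1})$ on $\mathrm{SL}_n(\Bbb{R})$ does \emph{not} preserve right $\mathrm{SO}_n(\Bbb{R})$-invariance (for $k\in\mathrm{SO}_n(\Bbb{R})$ one has $D_qkD_q^{-1}\notin\mathrm{SO}_n(\Bbb{R})$ in general), nor does it commute with right translation (only with right translation by elements commuting with $D_q$), so it does not directly give an isometry of the two symmetric spaces that is compatible with the invariant differential operators. The clean way to make the transfer is the one implicit in the adelic setup of the paper: both $L^2_{\rm cusp}(\Gamma(q)\backslash\mathrm{SL}_n(\Bbb{R})/\mathrm{SO}_n(\Bbb{R}))$ and $L^2_{\rm cusp}(\Gamma(q)^{\natural}\backslash\mathrm{SL}_n(\Bbb{R})/\mathrm{SO}_n(\Bbb{R}))$ de-adelise from the $K_\infty C(q)$- resp.\ $K_\infty C(q)^{\natural}$-fixed vectors of the cuspidal spectrum of $L^2(\mathrm{GL}_n(\Bbb{Q})\backslash\mathrm{GL}_n(\Bbb{A})^1)$, and for each cuspidal $\pi=\otimes_v\pi_v$ the local maps $\pi_p(D_{q,p})^{-1}\colon\pi_p^{C_p(q)}\to\pi_p^{C_p(q)^{\natural}}$ (for $p\mid q$) give $\dim\pi^{C(q)}=\dim\pi^{C(q)^{\natural}}$, while the Satake/Langlands parameters $\mu_{\pi_\infty}$ and $\mu_{\pi_v}$ for $v\nmid q$ are of course unchanged. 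Since these are the only data entering the sum, the two sides agree. With this correction, the argument is complete and matches the paper's.
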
   
   
Theorem \ref{thm1} is now an easy consequence.  For $T = M^{-K} q^{n+1}$   we have (the first inequality is a version of Rankin's trick) 
$$\mathcal{N}(\sigma, \mathcal{F}) \leq \sum_{\substack{\varpi  \in {\rm ONB}(L^2_{{\rm cusp}}(X(q)))\\ \| \mu_{\varpi} \| \leq M}} T^{2 \sigma_{\varpi, v} - 2\sigma}   \ll M^K  \mathcal{V}_q q^{-2\sigma(n+1)+\varepsilon},$$ and Theorem \ref{thm1} follows from \eqref{vqnq}. 

\begin{remark}\label{rem:aut_rep_pers}
This directly yields the following density estimate on the level of cuspidal automorphic representations
\begin{equation}
	\sum_{\substack{\pi\mid L^2_{\rm cusp}(X(q))\\ \Vert\mu_{\pi_{\infty}}\Vert\leq M,  \sigma_{\pi_{v}}>\sigma}}\dim_{\Bbb{C}}(\pi^{K(q)}) \ll M^K\cdot q^{1+\epsilon}\cdot\mathcal{V}_q^{1-\frac{2\sigma}{n-1}}.\nonumber
\end{equation}
Note that the additional factor of $q$ is natural, since twisting $\pi$ by a character with conductor $q$ changes the isomorphism class of $\pi$ but leaves $\mu_{\pi_{\infty}}$ and $\sigma_{\pi_v}$ invariant. For comparison one can look at the main term in the Weyl law for principal congrunce subgroup given in \cite[Theorem~0.2]{Mu}.

The weights $\dim_{\Bbb{C}}(\pi^{K(q)})$ can be removed with a little more work. Indeed, we would expect that for cuspidal representations one has $q^{\frac{n(n-1)}{2}-\epsilon}\ll \dim_{\Bbb{C}}(\pi^{K(q)}) \ll q^{\frac{n(n-1)}{2}+\epsilon}.$ This follows from the discussion below Proposition~\ref{prop:global_to_local} and the fact that cuspidal representations are generic. More precisely, assuming \eqref{local_assumption_2} is sharp for all generic representations, one obtains the weight-free density estimate
\begin{displaymath}
\begin{split}	\sharp \{\pi\subset & L_{\rm cusp}^2({\rm GL}_n(\Bbb{Q})\backslash {\rm GL}_n(\Bbb{A})^1)\colon \pi^{K(q)}\neq\{0\},\,  \Vert\mu_{\pi_{\infty}}\Vert\leq M,\, \sigma_{\pi_{v}}>\sigma\}\\
	&\ll M^K[q^{\frac{n(n+1)}{2}}]^{1-\frac{4\sigma}{n}+\epsilon}.
\end{split}
\end{displaymath}
\end{remark}
   
\section{The non-cuspidal spectrum}\label{sec-eis}
   
For our applications, we need an upgrade of Corollary \ref{cor-kuz} which includes Eisenstein series and the residual spectrum. The aim of this section is a proof of the following bound. 
   
\begin{theorem}\label{eisen} There exists an absolute constant $K > 0$ with the following property. Let $M, T > 1$, $q$ squarefree,  and suppose that $T \leq M^{-K} q^{n+1}$. Fix a place $v$ of $\Bbb{Q}$. If $v = p$ is finite, assume that $p \nmid q$. 
Then $$ \underset{\| \mu_{\varpi} \| \leq M}{\int_{\Gamma(q)}}  T^{2 \sigma_{\varpi}(v)}  \dd\varpi  \ll_{v, \varepsilon} M^{K} q^{\varepsilon}  \mathcal{V}_q .$$   
\end{theorem}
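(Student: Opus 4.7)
The plan is to proceed by induction on $n$, reducing the non-cuspidal contribution to cuspidal density statements on smaller general linear groups via Langlands' spectral decomposition. The base case $n=2$ is classical, and Corollary~\ref{cor-kuz} already handles the cuspidal part in every dimension, so it suffices to control Eisenstein series and the residual spectrum.

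First I would write
\begin{equation*}
L^2(\Gamma(q)\backslash {\rm SL}_n(\Bbb{R})/{\rm SO}_n(\Bbb{R})) = L^2_{\rm cusp} \oplus \bigoplus_{[P]} L^2_{P,\rm disc}
\end{equation*}
indexed by associate classes of proper standard parabolic subgroups $P=MN$ with Levi $M\cong {\rm GL}_{n_1}\times\cdots\times {\rm GL}_{n_k}$. Adelizing via the machinery of Section~\ref{sec:adelising} (now applied to all standard Levis), each $L^2_{P,\rm disc}$ is an integral of Eisenstein series $E(\phi,\lambda)$ over $i\mathfrak{a}_M^*$, where the inducing data $\phi$ lives in $\pi_1\otimes\cdots\otimes\pi_k$ with each $\pi_i$ a discrete-spectrum automorphic representation of ${\rm GL}_{n_i}(\Bbb{A})^1$ whose level divides $q$. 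The key point is that the local Langlands parameters of the induced representation at $v$ are the concatenation
\begin{equation*}
\mu_{E,v} = \bigsqcup_{i=1}^k \big(\mu_{\pi_i,v} + \lambda_{i,v}\cdot \mathbf{1}\big),
\end{equation*}
so on the unitary axis $\Re\lambda=0$ we have $\sigma_{E,v} = \max_i \sigma_{\pi_i,v}$. By Mœglin--Waldspurger, any residual constituent is a Speh representation ${\rm Sp}(\tau,b)$ attached to a cuspidal $\tau$ on ${\rm GL}_{n/b}$, with $\sigma_{{\rm Sp}(\tau,b),v} = \sigma_{\tau,v}+(b-1)/2$; since $b\geq 2$, the corresponding $\tau$ lives on a strictly smaller ${\rm GL}$.

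The next step is to bound $\int T^{2\sigma_{\varpi,v}}\,d\varpi$ restricted to the $P$-part. Decomposing $\phi$ along an orthonormal basis of each $\pi_i^{C(q)^{\natural}}$ and using Hypothesis~\ref{hyp_A} (or rather its encoded consequence that $L^2$-norms of truncated Eisenstein series are polynomially controlled) to bound the number of basis elements per $\pi_i$ by $O(q^\varepsilon\dim V_{\pi_i})$, the contribution of $P$ to the spectral integral is
\begin{equation*}
\ll M^K q^\varepsilon \cdot \mathcal{C}_P(q)\cdot \prod_{i=1}^k \Big(\sum_{\substack{\pi_i\mid X(q)\\ \|\mu_{\pi_i,\infty}\|\ll M}} \dim V_{\pi_i}\,T^{2\sigma_{\pi_i,v}}\Big)^{\!1/k_i},
\end{equation*}
after bounding the maximum by the sum over $i$; here $\mathcal{C}_P(q)$ counts the cusp/character choices for $P$ and is of size $q^{n^2-1-\sum_i(n_i^2-1)+o(1)}$. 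By the inductive hypothesis applied to each ${\rm GL}_{n_i}$ and the weighted-to-unweighted passage of Corollary~\ref{cor-kuz} (using Theorem~\ref{thm15} for each $n_i$), the bracketed factor is $\ll M^K q^\varepsilon \mathcal{V}_q^{(n_i)}$ with $\mathcal{V}_q^{(n_i)} = q^{n_i^2-1+o(1)}$. Multiplying,
\begin{equation*}
\mathcal{C}_P(q)\cdot \prod_{i=1}^k \mathcal{V}_q^{(n_i)} \ll q^{n^2-1+o(1)} = \mathcal{V}_q,
\end{equation*}
which is exactly the target bound for each $P$. Finitely many $[P]$ and the bound for the residual spectrum (in which $\tau$ lives on a strictly smaller group, and the $(b-1)/2$-shift is absorbed by $T\leq M^{-K}q^{n+1}$ and polynomial loss) complete the induction.

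The main obstacle will be obtaining clean polynomial-in-$M$ bounds for the truncated Eisenstein $L^2$-norms that appear once one opens up the integral over $\lambda$ against the weight $T^{2\sigma_{\varpi,v}}$: the Maaß--Selberg relations introduce quotients of Rankin--Selberg $L$-values on the unitary line, and extracting the correct dependence in $q$ without losing more than $q^\varepsilon$ is precisely what Hypothesis~\ref{hyp_A} is designed to provide. A secondary bookkeeping obstacle is showing that the $\lambda$-integral of $T^{2\sigma}$, when $\sigma=\sigma_{\pi,v}$ is independent of $\lambda$, converges after truncating $\|\mu_{E,\infty}\|\leq M$ with only a polynomial loss in $M$; this follows from the standard fact that the spectral density on $i\mathfrak{a}_M^*$ is polynomial in $\|\lambda\|$, so the truncation $\|\mu_{E,\infty}\|\leq M$ forces $\|\lambda\|\ll M$ and contributes at most $M^{O(1)}$.
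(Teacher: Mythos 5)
Your overall plan — reduce to cuspidal density on smaller general linear groups via Langlands' spectral decomposition, tracking the exponent bookkeeping — is the right one and agrees with the paper's. But there is a genuine error at the heart of your argument, together with a gap in the bookkeeping.

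\textbf{The hyp\_A invocation is wrong.} Theorem~\ref{eisen} is proved \emph{unconditionally} in the paper; Hypothesis~\ref{hyp_A} enters only in Section~\ref{sec8} (via Lemma~\ref{local}) for the sup-norm/pointwise bounds used in Theorems~\ref{thm2} and \ref{thm3}. You invoke Hypothesis~\ref{hyp_A} ``to bound the number of basis elements per $\pi_i$ by $O(q^{\varepsilon}\dim V_{\pi_i})$,'' but this is circular: the number of basis vectors of $\pi_i^{C(q)^{\natural}}$ is $\dim V_{\pi_i}$ by definition, so no hypothesis is needed, and Hypothesis~\ref{hyp_A} has nothing to say about dimensions — it controls $L^2$-norms of truncated Eisenstein series, a different quantity. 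The actual dimension accounting is purely local and combinatorial: it is Lemma~\ref{lm:dimensions_ps}, which gives $\dim {\rm Ind}_P^{{\rm GL}_n}(\sigma(\boldsymbol{\tau})_{\lambda})^{C(q)}$ in terms of $\#P\backslash G/C(q)$ and $\prod_i\dim V_{\tau_i}$. Importing a global $L$-function hypothesis into a statement that is proved unconditionally is a substantive mistake.

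\textbf{The $T$-rescaling is not handled.} Corollary~\ref{cor-kuz} applied to a factor ${\rm GL}_{n_i}$ requires $T \leq M^{-K}q^{n_i+1}$, which is \emph{stronger} than the hypothesis $T \leq M^{-K}q^{n+1}$ when $n_i < n$. Your sketch bounds $T^{2\max_i\sigma_i}$ by a sum and invokes the induction hypothesis for each ${\rm GL}_{n_i}$ as if the same $T$ were admissible; it is not. The paper resolves this by splitting $T^{2\sigma_{\tau_k,v}} = (T^{n_k/n})^{2\sigma_{\tau_k,v}}\cdot T^{2\sigma_{\tau_k,v}(1-n_k/n)}$, applying the density bound with the reduced parameter $T^{n_k/n}\leq M^{-K'}q^{n_k+1}$, and absorbing the leftover factor via the trivial bound $\sigma_{\tau_k,v}\leq(n_k-1)/2$ into an exponent calculation (showing $\alpha_P\leq 0$). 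An analogous rescaling $T\rightsquigarrow T^{1/d}$ is essential for the Speh piece. Your remark that ``the $(b-1)/2$-shift is absorbed by $T\leq M^{-K}q^{n+1}$ and polynomial loss'' gestures at this but does not establish it, and the exponent $1/k_i$ in your displayed bound is undefined and does not correspond to any coherent H\"older step. These are the parts where the proof actually lives, and they need to be carried out.
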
   

The proof of this is an inductive argument using Corollary~\ref{cor-kuz}, Langlands' spectral decomposition of $L^2({\rm GL}_n(\Bbb{Q})\backslash {\rm GL}_n(\Bbb{A})^1)$ and the description of the residual spectrum for ${\rm GL}_n$ due to M\oe glin and Waldspurger. A  concise summary of the relevant theory can be found in \cite[Chapter~10]{Ge} 
or in \cite{Ar}. Full proofs are given in \cite{MW1} and \cite{MW2}.\\

\textbf{Proof.} 
Let us denote the $n$-dependence explicitly by writing  $$X(q) =  X^{(n)}(q) = \Gamma(q)\backslash {\rm GL}_n(\Bbb{R})/{\rm SO}_n(\Bbb{R}).$$  The basic strategy is to use Langlands' spectral decomposition for $L^2({\rm GL}_n(\Bbb{Q})\backslash {\rm GL}_n(\Bbb{A})^1)$ to decompose $$L^2(X^{(n)}(q)) = \bigoplus_{P/\sim } L^2_P(X^{(n)}(q)),$$ where $P$ runs through all standard parabolic subgroups of ${\rm GL}_n$ up to association, including  $P={\rm GL}_n$ itself. 
Furthermore each standard parabolic subgroup of ${\rm GL}_n$ is determined by a partition $n=n_1+\ldots+n_k$, and we can describe each of the spaces $L^2_P(X^{(n)}(q))$ in terms of $$L^2_{\rm disc}(X^{(n_i)}(q)) = L^2_{{\rm GL}_{n_i}}(X^{(n_i)}(q))$$ with $1\leq i\leq k$. The nature of the spaces $L^2_P(X^{(n)}(q))$ will be described  below in our proof, but we will use the notation $\int_{\Gamma(q)}^{(P)}$ to denote the $P$-part of the spectral decomposition of $L^2(X(q))$. Of course it is enough to prove $$\underset{\| \mu_{\varpi} \| \leq M}{\int_{\Gamma(q)}^{(P)}}  T^{2 \sigma_{\varpi,v}}  \dd\varpi  \ll_{v, \varepsilon} M^{K} q^{\varepsilon}  \mathcal{V}_q,$$ for each standard parabolic subgroup $P$. 

We start by considering the contribution from $P={\rm GL}_n$, i.e.\   the discrete spectrum. 
 To treat this piece we recall the parametrisation of  the discrete spectrum given by M\oe glin and Waldspurger. Given $d\mid n$ and a cuspidal automorphic representation $\pi$ of ${\rm GL}_{\frac{n}{d}}(\Bbb{A})^1$, we define the Speh representation ${\rm Speh}(\pi,d)$ of ${\rm GL}_n(\Bbb{A})^1$ as the unique irreducible subrepresentation of 
\begin{equation}
	{\rm Ind}_{P_d(\Bbb{A})}^{{\rm GL}_n(\Bbb{A})} (\vert \cdot\vert_{\Bbb{A}}^{-\frac{d-1}{2}}\pi \otimes\ldots \vert \cdot\vert_{\Bbb{A}}^{\frac{d-1}{2}}\pi), \nonumber
\end{equation}
where $P_d$ is the standard parabolic subgroup associated to the partition $\frac{n}{d}+\ldots +\frac{n}{d} = n$. It turns out that for each Speh representation (i.e.\ each tuple $(\pi,d)$) there is a unique irreducible subrepresentation of $L^2({\rm GL}_n(\Bbb{Q})\backslash {\rm GL}_n(\Bbb{A})^1)$ which is isomorphic to ${\rm Speh}(\pi,d)$. For convenience we will denote this automorphic representation also by ${\rm Speh}(\pi,d)$. One obtains the spectral decomposition
\begin{equation}
	L^2_{\rm disc}({\rm GL}_n(\Bbb{Q})\backslash {\rm GL}_n(\Bbb{A})^1) = \bigoplus_{d\mid n}\bigoplus_{\pi  \subset L^2_{\rm cusp}({\rm GL}_{\frac{n}{d}}(\Bbb{Q})\backslash {\rm GL}_{\frac{n}{d}}(\Bbb{A})^1)} {\rm Speh}(\pi,d) \nonumber
\end{equation}
of the discrete spectrum of $L^2({\rm GL}_n(\Bbb{Q})\backslash {\rm GL}_n(\Bbb{A})^1)$. By de-adelisation we obtain the decomposition
\begin{equation}
	L^2_{\rm disc}(X^{(n)}(q)) = \bigoplus_{d\mid n}\sideset{}{'}\bigoplus_{\pi\mid L^2_{\rm cusp}(X^{n/d}(q)) } V_{{\rm Speh}(\pi,d)}.\nonumber
\end{equation}
Note that the piece given by $d=1$ is nothing but the cuspidal part of the spectrum. Furthermore we have $\dim V_{{\rm Speh}(\pi,d)} = \dim {\rm Speh}(\pi,d)^{K(q)}$ and the factorization ${\rm Speh}(\pi,d) = \bigotimes_v {\rm Speh}(\pi_v,d)$, where ${\rm Speh}(\pi_v,d)$ is the Langlands quotient of $\xi_v={\rm Ind}_{P_d(\Bbb{Q}_v)}^{{\rm GL}_n(\Bbb{Q}_v)} (\vert \cdot\vert_v^{\frac{d-1}{2}}\pi_v \otimes\ldots \vert \cdot\vert_v^{-\frac{d-1}{2}}\pi_v)$. Applying Lemma~\ref{lm:dimensions_ps} locally we obtain 
\begin{equation}
	\dim V_{{\rm Speh}(\pi,d)} \leq \dim(V_{\pi})^d q^{\frac{n(n-1)}{2}-\frac{n(n/d-1)}{2}}. \label{upper_boubnd_speh}
\end{equation}
Probably \eqref{upper_boubnd_speh} is far from optimal but for what follows it is sufficient. Finally, observe that $$\sigma_{{\rm Speh}(\pi,d)}(v) = \sigma_{\pi}(v) + \frac{d-1}{2}.$$ Note that for a Dirichlet character $\chi$, the representation ${\rm Speh}(\omega_{\chi},n)$ is precisely the one-dimensional representation given by $\omega_{\chi}\circ\det$, and we have $\sigma_{{\rm Speh}(\omega_{{\chi_0},v},n)} = (n-1)/2$. 
 We compute (recall the notation \eqref{sum-prime}) 
\begin{align}
	\underset{\| \mu_{\varpi} \| \leq M}{\int_{\Gamma(q)}^{({\rm GL}_n)}}  &T^{2 \sigma_{\varpi,v}}  \dd\varpi = T^{2 \sigma_{{\rm Speh}(\omega_{{\chi_0},v},n)}}+\sum_{\substack{d\mid n \\ d\neq n}}\sideset{}{'}\sum_{\substack{\pi\mid L^2_{\rm cusp}(X^{(n/d)}(q))\\ \| \mu_{\pi_{\infty}} \| \leq M} }\dim (V_{{\rm Speh}(\pi,d)})T^{2 \sigma_{{\rm Speh}(\pi_v,d)}} \nonumber\\
	&\leq T^{n-1}+\sum_{\substack{d\mid n \\ d\neq n}}T^{d-1}q^{\frac{n(n-1)}{2}-\frac{n(n/d-1)}{2}}\sideset{}{'}\sum_{\substack{\pi\mid L^2_{\rm cusp}(X^{(n/d)}(q)) \\ \|\mu_{\pi_{\infty}} \|\leq M}}\dim( V_{\pi})^d\cdot T^{2 \sigma_{\pi_v}} \nonumber \\
	&\leq T^{n-1}+\sum_{\substack{d\mid n \\ d\neq n}}T^{d-1}q^{\frac{n(n-1)}{2}-\frac{n(n/d-1)}{2}}\Bigg(\sideset{}{'}\sum_{\substack{\pi\mid L^2_{\rm cusp}(X^{(n/d)}(q)) \\ \| \mu_{\pi_{\infty}} \| \leq M}}\dim( V_{\pi})\cdot T^{\frac{2}{d} \sigma_{\pi_v}}\Bigg)^d. \nonumber
\end{align}
We recall our assumption $T\leq M^{-K} q^{n+1}$ where $K = K_n$ is a sufficiently large constant. We apply Corollary~\ref{cor-kuz} to each term in the last parenthesis for  degree $n/d$ and $T^{1/d}$ in place of $T$. As long as $K_n \geq d K_{n/d}$, this application of Corollary~\ref{cor-kuz}  is admissible. By making $K_n$ larger if necessary, the assumption $K_n \geq d K_{n/d}$ can always be ensured. We thus obtain
 \begin{align}
	\underset{\| \mu_{\varpi} \| \leq M}{\int_{\Gamma(q)}^{({\rm GL}_n)}}  T^{2 \sigma_{\varpi,v}}  \dd\varpi &\ll  q^{n^2-1}+\sum_{\substack{d\mid n \\ d\neq n}}  M^{dK_{\frac{n}{d}}-(d-1)K_n}q^{(d-1)(n+1)+\frac{n(n-1)}{2}-\frac{n(n/d-1)}{2}+\frac{n^2}{d}-d + \varepsilon} \nonumber\\
	&\ll M^{K_n} q^{\varepsilon} \mathcal{V}_q \cdot \Big(1+ \sum_{\substack{d\mid n,d\neq n}} q^{\alpha_d}\Big), \quad \alpha_d = \frac{(d-1)n(2d-n)}{2d}.\nonumber
\end{align}
Note that for $1 \leq d \leq   n/2$  we have $\alpha_d\leq 0$. This completes the contribution the discrete spectrum (i.e.\ $P={\rm GL}_n$). To summarise, for $T\leq M^{-K} q^{n+1}$ we have seen that
\begin{equation}
	\sideset{}{'}\sum_{\substack{\pi\mid L^2_{\rm disc}(X^{(n)}(q)) \\ \| \mu_{\pi_{\infty}} \| \leq M}}\dim(V_{\pi}) T^{2 \sigma_{\pi_v}} = \underset{\| \mu_{\varpi} \| \leq M}{\int_{\Gamma(q)}^{({\rm GL}_n)}}  T^{2 \sigma_{\varpi,v}}  \dd\varpi \ll M^{K_n}q^{n^2 - 1 + \varepsilon}. \label{bound_disc}
\end{equation}
 We will need this bound for $n'<n$ in the process of treating the Eisenstein contribution.

Before we treat the remaining contributions we study a certain globally induced representation. 
Given a standard parabolic subgroup $P=MN$ let ${Z}_M^+$ denote the intersection $\tilde{T}(\Bbb{R})$ and the center of $M(\Bbb{A})$. We write $R_{M,{\rm disc}}$ for the right regular representation on $L^2_{\rm disc}({Z}_M^+M(\Bbb{Q})\backslash M(\Bbb{A}))$. We define the Hilbert space
\begin{multline}
	\mathcal{H}_P =\{ \phi\colon {Z}^+_MN(\Bbb{A})M(\Bbb{Q})\backslash {\rm GL}_n(\Bbb{A})\to \Bbb{C} \text{ measurable}\colon\\
 [m\mapsto\phi(mx)]\in L^2_{\rm disc}({Z}^+_MM(\Bbb{Q})\backslash M(\Bbb{A})), \\ \Vert \phi\Vert^2 = \int_K \int_{{Z}^+_MM(\Bbb{Q})\backslash M(\Bbb{A})} \vert \phi(mk)\vert^2\dd m\, \dd k<\infty \}.\nonumber
\end{multline}
For $\lambda\in \mathfrak{a}_{P,\Bbb{C}}^{\star}$ let $\mathcal{I}_P(\lambda)$ act on $\mathcal{H}_P$ by
\begin{equation}
	[\mathcal{I}_P(\lambda,g)\phi](x) = \phi(xg)e^{(\lambda+\rho_P)(H_P(xg))}e^{-(\lambda+\rho_P)(H_P(x))},\nonumber
\end{equation}
where $\rho_P\in\mathfrak{a}_{P,\Bbb{C}}^{\star}$ is the half sum of positive roots of a maximal torus of ${\rm GL}_n$ contained in $P$ and $H_P\colon {\rm GL}_n(\Bbb{A})\to \mathfrak{a}_{P,\Bbb{C}}$ satisfies $\delta_P(p) = e^{2\rho_P(H_P(p))}$ and $H_P(pk)=H_P(p)$ for all $p\in P(\Bbb{A})$ and all $k\in K$. Write $\mathcal{H}^{\circ}_P$ for the subspace of $K$-finite elements. For $\lambda=0$, this is nothing but the representation obtained by lifting $R_{M,{\rm disc}}$ to a representation of $P=MN$ and parabolically inducing to ${\rm GL}_n$ from there. We have the decomposition
\begin{equation}
	R_{M,\rm{disc}} = \bigoplus_{\tau_1 \subset L^2_{\rm disc}({\rm GL}_{n_1} (\Bbb{Q})\backslash{\rm GL}_{n_1}(\Bbb{A})^1)} \ldots \bigoplus_{\tau_k \subset L^2_{\rm disc}({\rm GL}_{n_k} (\Bbb{Q})\backslash{\rm GL}_{n_k}(\Bbb{A})^1)}\tau_1\otimes \ldots\otimes \tau_k.\nonumber
\end{equation}
To shorten notation we write $\boldsymbol{\tau}=(\tau_1,\ldots,\tau_k)$ for a tuple of irreducible automorphic representations as in the display and we set $\sigma(\boldsymbol{\tau}) = \tau_1\otimes \ldots\otimes \tau_k$. The latter defines an irreducible automorphic representation of $Z_M^+\backslash M(\Bbb{A})$, which factors $\sigma(\boldsymbol{\tau}) = \otimes _v\sigma(\boldsymbol{\tau})_v$. We define
\begin{equation}
	\sigma(\boldsymbol{\tau})_{v,\lambda}(m) = \sigma(\boldsymbol{\tau})_{v}(m) e^{\lambda(H_P(m))} \text{ for }\lambda\in\mathfrak{a}_{P,\Bbb{C}}\text { and } m\in M(\Bbb{Q}_v).\nonumber
\end{equation}
We then have
\begin{equation}
	\mathcal{I}_P(\lambda) =\bigoplus_{\boldsymbol{\tau}}{\rm Ind}_P^{{\rm GL}_n}(\sigma(\boldsymbol{\tau})_{\lambda}) = \bigoplus_{\boldsymbol{\tau}} \bigotimes_v {\rm Ind}_P^{{\rm GL}_n}(\sigma(\boldsymbol{\tau})_{v,\lambda}).\nonumber
\end{equation}
By Lemma~\ref{lm:dimensions_ps} we get
\begin{equation}
\begin{split}
	\dim {\rm Ind}_P^{{\rm GL}_n}&(\sigma(\boldsymbol{\tau})_{\lambda})^{K(q)} = \Big(\prod_{p\mid q}\# P(\Bbb{Q}_p)\backslash {\rm GL}_n(\Bbb{Q}_p)/K_p(q) \Big) \prod_{i=1}^k \dim(V_{\tau_i})\\
	& = q^{\frac{n(n-1)}{2}-\sum_{i=1}^k \frac{n_i(n_i-1)}{2}}\prod_{i=1}^k \dim(V_{\tau_i}) = q^{\frac{1}{2}\sum_{1 \leq i \not= j \leq n} n_in_j}\prod_{i=1}^k \dim(V_{\tau_i}).\nonumber
	\end{split}
\end{equation}
Furthermore, if $\lambda\in i\mathfrak{a}_{P}$, then
\begin{equation}
	\sigma_{{\rm Ind}_P^{{\rm GL}_n}(\sigma(\boldsymbol{\tau})_{\lambda})}(v) = \max_{i=1,\ldots,k}\sigma_{\tau_i}(v).\nonumber
\end{equation}

Langlands' theory of Eisenstein series yields the orthogonal decomposition 
\begin{equation}
	L^2({\rm GL}_n(\Bbb{Q})\backslash {\rm GL}_n(\Bbb{A})^1) = \bigoplus_{P/\sim} L^2_P({\rm GL}_n(\Bbb{Q})\backslash {\rm GL}_n(\Bbb{A})^1),
\end{equation}
where 
\begin{equation}
	L^2_P({\rm GL}_n(\Bbb{Q})\backslash {\rm GL}_n(\Bbb{A})^1) \cong \int_{i\mathfrak{a}_P^+} I_P(\lambda)\dd \lambda.
\end{equation}
The intertwiner can be given more explicitly, but this is not necessary for our purposes. The space $X_P^{(n)}(q)$ is precisely the space arising by de-adelising $L^2_P({\rm GL}_n(\Bbb{Q})\backslash {\rm GL}_n(\Bbb{A})^1)$. With this at hand we can write
\begin{displaymath}
\begin{split}
	\underset{\| \mu_{\varpi} \| \leq M}{\int_{\Gamma(q)}^{(P)}}  T^{2 \sigma_{\varpi,v}}  \dd\varpi  =
	& \sum_{\tau_1\mid L^2_{\rm disc}(X^{(n_1)}(q))}\ldots \sideset{}{'}\sum_{\tau_k\mid L^2_{\rm disc}(X^{(n_k)}(q))}\dim {\rm Ind}_P^{{\rm GL}_n}(\sigma(\boldsymbol{\tau})_{\lambda})^{K(q)}\\
	 &\cdot  T^{2\sigma_{{\rm Ind}_P^{{\rm GL}_n}(\sigma(\boldsymbol{\tau})_{v,\lambda})}} \int_{\substack{\lambda\in i\mathfrak{a}_P^+\\ \Vert \mu_{{\rm Ind}_P^{{\rm GL}_n}(\sigma(\boldsymbol{\tau})_{\infty,\lambda})}\Vert \leq M}}\dd\lambda. 
	 \end{split}
\end{displaymath}
Since the integral can be treated trivially, we  proceed by handling all the $\tau_i$-sums. Note that even though adelisation works only up to twist (modulo $q$) we can use this twist redundancy only to reduce one of the smaller representations, say $\tau_k$, modulo twists. Note that it also suffices to treat the case $\max_{i=1,\ldots,k}\sigma_{\tau_i}(v)=k$. For $i=1,\ldots,k-1$ we treat the $\tau_i$-sum only using a suitable Weyl law (which is a standard application of the trace formula or alternatively a consequence of \eqref{bound_disc} with $T=1$):  
\begin{equation}
	\sum_{\substack{\tau_i\mid L^2_{\rm disc}(X^{(n_i)}(q)) \\ \Vert \mu_{\tau_{i,\infty}}\Vert\leq M}} \dim V_{\tau_i} \ll q\sideset{}{'}\sum_{\substack{\tau_i\mid L^2_{\rm disc}(X^{(n_i)}(q)) \\ \Vert \mu_{\tau_{i,\infty}}\Vert\leq M}}\dim V_{\tau_i} \ll M^{K_{n_i}}q^{n_i^2+\varepsilon}.\nonumber
\end{equation}
We can also estimate
\begin{equation}
	T^{2\sigma_{{\rm Ind}_P^{{\rm GL}_n}(\sigma(\boldsymbol{\tau})_{v,\lambda})}} \leq (T^{\frac{n_k}{n}})^{2\sigma_{\tau_{k,v}}} \cdot T^{\frac{(n-n_k)(n_k-1)}{n}},\nonumber
\end{equation}
where we use the trivial bound $\sigma_{\tau_{k,v}}\leq \frac{n_k-1}{2}$. With this at hand we obtain  
\begin{align}
\underset{\| \mu_{\varpi} \| \leq M}{\int_{\Gamma(q)}^{(P)}}  T^{2 \sigma_{\varpi,v}}  \dd\varpi  &\ll M^{\underset{i \leq k-1}{\sum}
	K_{n_i}}q^{ \underset{ i \leq k-1}{\sum}
	n_i^2+\frac{1}{2} \underset{1\leq i\neq j\leq k}{\sum} 
	n_in_j+\varepsilon } 
	T^{\frac{(n-n_k)(n_k-1)}{n}}\\[-0.5cm]
	&\quad\quad\quad\quad \cdot\sideset{}{'}\sum_{\substack{\tau_k\mid L^2_{\rm disc}(X^{(n_k)}(q)),\\ \Vert \mu_{\tau_{k,\infty}}\Vert\leq M  }}\dim V_{\tau_k}(T^{\frac{n_k}{n}})^{2\sigma_{\tau_{k,v}}}\int_{\substack{\lambda\in i\mathfrak{a}_P^+\\ \Vert \lambda\Vert \leq M}}\dd\lambda \nonumber \\
	&   \ll M^Kq^{\underset{i \leq k}{\sum}n_i^2+\frac{1}{2}\underset{1\leq i\neq j\leq k}{ \sum}n_in_j-1+\varepsilon } T^{\frac{(n-n_k)(n_k-1)}{n}}\\
	& \ll  M^Kq^{\varepsilon}\mathcal{V}_q q^{-\frac{1}{2}\underset{1\leq i\neq j\leq k}{\sum}n_in_j+\varepsilon } T^{\frac{(n-n_k)(n_k-1)}{n}}.\nonumber
\end{align}
Here  we have estimated the $\tau_k$-sum using \eqref{bound_disc}. Inserting the estimate $T\ll q^{n+1}$ gives us
\begin{equation}
	\underset{\| \mu_{\varpi} \| \leq M}{\int_{\Gamma(q)}^{(P)}}  T^{2 \sigma_{\varpi,v}}  \dd\varpi \ll M^Kq^{\alpha_P+\varepsilon}\mathcal{V}_q, \label{eq:better_on_para}
\end{equation}
for $\alpha_P = -\frac{1}{2}\sum_{1\leq i\neq j\leq k}n_in_j+(n+1)\frac{(n-n_k)(n_k-1)}{n}$. We are done as soon as we can show that $\alpha_P\leq 0$. To see this we recall $n=n_1+\ldots+n_k$ and write 
\begin{align}
	\alpha_P 
	&=  (n-n_k)\frac{(n_k-1-n)}{n}  - \sum_{1\leq i<j<k}n_in_j  \leq -\sum_{1\leq i<j<k}n_in_j \leq 0.\nonumber
\end{align}

\begin{remark}\label{rem_more_precise}
Note that the main contribution in our density estimate above comes from the discrete (even the cuspidal) part of the spectrum. The continuous contribution coming from some standard parabolic subgroup $P$ has been estimated quite wastefully in \eqref{eq:better_on_para}. Indeed our proof shows that $$\underset{\| \mu_{\varpi} \| \leq M}{\int_{\Gamma(q)}^{(P)}}  T^{2 \sigma_{\varpi,v}}  \dd\varpi \ll M^Kq^{\alpha_P+\varepsilon}\mathcal{V}_q,$$ where $\alpha_P<0$ for all proper parabolic subgroups $P$. One sees that $\alpha_P\leq -\frac{2}{n}$, the worst case being a maximal parabolic subgroup where this bound is attained. 
\end{remark}

We close this section by  recalling that congruence subgroups possess a spectral gap. For a cuspidal representation $\pi$ we have $\| \Re \mu_{\pi} \| \leq 1/2$ (in fact even $\leq \frac{1}{2} - \frac{1}{n^2 - 1}$). If we include the continuous and residual spectrum, then for $n \geq 3$ the worst case appears for the Eisenstein series associated to the $(n-1, 1)$ parabolic with the trivial representation on the $(n-1)$-block, in which case 
\begin{equation}\label{gap}
  \| \Re \mu_{\pi} \| \leq \mu^{\ast} = \frac{n-2}{2}. 
\end{equation}

 \section{Applications}\label{sec8}
 
 In this final section we prove Theorems \ref{thm2} and \ref{thm3}. We start with an auxiliary result. 
 
 \subsection{Bounding eigenfunctions}

For each $\bar{g} \in {\rm SL}_n(\Bbb{Z}/q\Bbb{Z})$ choose $g \in {\rm SL}_n(\Bbb{Z})$ such that $g$ projects onto $\bar{g}$ modulo $q$. These matrices form a set of representatives for $\Gamma(1)/\Gamma(q)$. Note in particular that $\Gamma(q)$ is normal in $\Gamma(1)$. 

Besides $L^2$-eigenfunctions it is also necessary to deal with certain Eisenstein series that appear in the spectrum of $X_q$. We start by recalling their (adelic) construction. We use the same notation as in the proof of Theorem~\ref{eisen}. Let $P=MN$ be a standard parabolic subgroup of ${\rm GL}_n$. Given $x\in{\rm GL}_n(\Bbb{A})$, $\lambda\in \mathfrak{a}_{P,\Bbb{C}}^{\star}$ and $\phi\in \mathcal{H}_P^{\circ}$ we define the Eisenstein series by
\begin{equation}
	E(x,\phi,\lambda) = \sum_{\gamma\in P(\Bbb{Q})\backslash G(\Bbb{Q})}\phi(\gamma x)e^{(\lambda+\rho_P)(H_P(\gamma x))}.\nonumber
\end{equation}
Note that this series converges absolutely for $\Re(\lambda)\in \rho_P+\mathfrak{a}_P^+$ and is understood by analytic continuation otherwise. Recall the action of $\mathcal{I}_P(\lambda,\cdot)$ on $\mathcal{H}_P$ and observe that
\begin{equation}
	E(x,\mathcal{I}_P(\lambda,g)\phi,\lambda) = E(xg,\phi,\lambda).\nonumber
\end{equation}
In particular, the de-adelisation of $E(\cdot,\phi,\lambda)$ contributes to the spectral decomposition of $X_q$ precisely when $\mathcal{I}_P(\lambda,k)\phi = \phi$ for all $k\in K(q)$. This property is independent of $\lambda$ and we write $\mathcal{H}_P^{K(q)}$ for the space of $\phi\in\mathcal{H}_P$ with this property. Recall that we can decompose
\begin{equation}
	\mathcal{H}_P^{\circ} = \bigoplus_{\sigma\in L^2_{\rm disc}({ Z}_M^+M(\Bbb{Q})\backslash M(\Bbb{A}))}	\mathcal{H}_P(\sigma)^{\circ} \nonumber
\end{equation}
and identify $\mathcal{H}_P(\sigma)^{\circ}$ with the induced representation  $\text{Ind}_P^{{\rm GL}_n}(\sigma)$.

Finally we recall the definition of the truncation operator $\Lambda^T$ from \cite[Section~1]{Ar1}, for some fixed $T\in \mathfrak{a}^+$ with $\alpha(T)\gg 1$ for all simple roots $\alpha$ of ${\rm GL}_n$. The latter condition is referred to as ``sufficiently regular''. Below we will need the following assumption which is a precise version of \eqref{11} and discussed at the end of Subsection \ref{appl}.

\begin{hyp}\label{hyp_A}
For all standard parabolic subgroups $P=MN$ of ${\rm GL}_n(\Bbb{A})$, all irreducible automorphic representations $\sigma\subset L^2_{\rm disc}({Z}_M^+M(\Bbb{Q})\backslash M(\Bbb{A}))$ and all $\phi\in\mathcal{H}_P(\sigma)^{K(q)}$ with $\|\phi\|=1$ we have
\begin{equation}
	\| \Lambda^T E(.,\phi,\lambda)\|^2 = \int_{{\rm GL}_n(\Bbb{Q})\backslash {\rm GL}_n(\Bbb{A})^1}| \Lambda^T E(x,\phi,\lambda)|^2\dd x \ll_{T, \varepsilon} q^{\varepsilon}\Vert \mu_{\sigma_{\infty,\lambda}}\Vert^K.\nonumber
\end{equation}
\end{hyp}

We proceed to prove the following lemma.
 
\begin{lemma}\label{local} As in \eqref{spec} let $\varpi$ be a member of an ONB of the spectral decomposition $X_q$ with spectral parameter $\mu$. If $\varpi$ is not discrete, assume that Hypothesis~\ref{hyp_A} holds. Then $$\sum_{\bar{g} \in  {\rm SL}_n(\Bbb{Z}/q\Bbb{Z})}|\varpi(g)|^2 \ll q^{\varepsilon}   \|  \mu \|^K.$$
\end{lemma}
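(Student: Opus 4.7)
The plan is to exploit the normality of $\Gamma(q)$ in $\Gamma(1) = {\rm SL}_n(\Bbb{Z})$ together with a local mean-value inequality for eigenfunctions. Because $\Gamma(q)$ is normal, the quotient map $\Gamma(q)\backslash\mathcal{H} \to \Gamma(1)\backslash\mathcal{H}$ is a Galois cover with deck group $\Gamma(1)/\Gamma(q)$, and the collection $\{gI : \bar{g} \in \Gamma(1)/\Gamma(q)\}$ is precisely the fiber over the image of $I$. The first ingredient I would use is the standard pointwise bound
\[
|\varpi(g)|^2 \ll (1+\|\mu\|)^{K} \int_{B(gI,r)} |\varpi|^2
\]
for a small fixed $r > 0$, where the ball may be taken in $\Gamma(q)\backslash\mathcal{H}$ itself: any $\gamma \in \Gamma(q) \setminus \{I\}$ has some entry of size $\geq q$, so $d(I, \gamma I) \gg \log q$ and the ball embeds faithfully for large $q$. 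The estimate is the local Sobolev/mean-value bound obtained by testing $\varpi$ against a bi-$K$-invariant bump whose spherical transform has a polynomial lower bound in $\|\mu\|^{-1}$ (in the spirit of \eqref{sph}--\eqref{sph-rho}); since it is local, it applies equally to cuspidal and to Eisenstein eigenfunctions.

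Next, I would bound the overlap of the balls $B(gI,r)$ in $\Gamma(q)\backslash\mathcal{H}$ as $\bar{g}$ varies. Choosing $r$ smaller than the injectivity radius at $I$ in the orbifold $\Gamma(1)\backslash\mathcal{H}$ (a positive absolute constant, since ${\rm Stab}_{\Gamma(1)}(I) = \Gamma(1) \cap K$ is finite), the preimages of the ball of radius $r$ around $I$ form an almost-disjoint union in $\Gamma(q)\backslash\mathcal{H}$, with multiplicity at most $|\Gamma(1) \cap K| = O(1)$. Summing the pointwise estimate therefore yields
\[
\sum_{\bar{g} \in {\rm SL}_n(\Bbb{Z}/q\Bbb{Z})} |\varpi(g)|^2 \ll (1+\|\mu\|)^K \cdot \|\varpi\|_{L^2(\Gamma(q)\backslash\mathcal{H})}^2.
\]
If $\varpi$ lies in the discrete spectrum (cuspidal or residual) and is normalised so that $\|\varpi\|_2 = 1$, this finishes the proof directly.

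For Eisenstein $\varpi$ with $\|\varpi\|_2 = \infty$, the same reasoning is applied with $\Lambda^T\varpi$ in place of $\varpi$. The geometric point is that each $gI$ for $g \in \Gamma(1)$ maps to $I \in \Gamma(1)\backslash\mathcal{H}$, which has bounded Iwasawa height in the unique cusp of $\Gamma(1)\backslash\mathcal{H}$; by the covering description above, every $gI$ therefore has bounded Iwasawa height in every cusp of $\Gamma(q)\backslash\mathcal{H}$, uniformly in $\bar{g}$. Consequently, for a sufficiently regular truncation parameter $T \in \mathfrak{a}^+$ chosen independent of $q$, one has $\Lambda^T\varpi = \varpi$ on each ball $B(gI,r)$, and Hypothesis~\ref{hyp_A} supplies the bound $\|\Lambda^T\varpi\|_2^2 \ll q^\varepsilon \|\mu\|^K$, completing the argument.

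The main obstacle is the local mean-value inequality with explicit polynomial dependence on $\|\mu\|$ in the higher-rank setting: for spherical eigenfunctions the bound is classical, but tracking the exponent $K$ requires a careful engineering of the bi-$K$-invariant test function analogous to the spherical-transform analysis already carried out in the paper. A secondary point is the uniform truncation claim in the Eisenstein case, which does not require any estimate inside the growing quotient $\Gamma(q)\backslash\mathcal{H}$ but instead follows cleanly from the covering-theoretic description of the fiber above $I$.
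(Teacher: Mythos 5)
Your proposal follows the paper's argument closely: the local mean-value estimate via spherical functions (your ``bi-$K$-invariant bump'' with a spherical-transform lower bound is the same engineering as testing against $\phi_\mu$ directly and invoking \cite[(5.5)]{Va}), the almost-disjointness of the balls $B_\delta(g)$, and the treatment of Eisenstein series by observing that $\Lambda^T E = E$ on the relevant region and then applying Hypothesis~\ref{hyp_A}. One step you gloss over, and that the paper does carefully, is the normalization bookkeeping in the Eisenstein case: Hypothesis~\ref{hyp_A} bounds $\|\Lambda^T E(\cdot,\phi,\lambda)\|^2$ in the \emph{adelic} $L^2$-norm with $\|\phi\|=1$, while your $\varpi$ lives in $X_q$ with the classical inner product, and the two differ by the proportionality constant in \eqref{strongapprox} together with the factor $K_q\asymp\mathcal{V}_q^{1/2}$ relating $\varpi$ and $E$. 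The paper resolves this by passing from $\int_\Omega|E(x_\infty,\phi,\lambda)|^2\,\dd x_\infty$ to the adelic integral over $\Omega_{\mathbb{A}}=B_\delta(I_n)\times K_{\rm fin}$ and verifying $\mathrm{Vol}(C'(q))^{-1}K_q^{-2}\asymp 1$, so that the $q$-dependence of the domain disappears before applying the hypothesis; the constants do cancel as you implicitly assume, but you should verify this since without it the displayed bound on $\|\Lambda^T\varpi\|_2^2$ is not literally what Hypothesis~\ref{hyp_A} states.
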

 
\textbf{Proof.} This is a standard ``local'' argument based on properties of spherical functions; see e.g.\ \cite[pp.\ 11-12]{SaMo}.  Fix $\delta > 0$ and let $B_{\delta}(g)$ denote the ball about $g$ of radius $\delta$. For $k \in g^{-1} {\rm SO}_n(\Bbb{R}) g$ we have $$\int_{B_{\delta}(g)} \varpi(x) \overline{\phi_{\mu}(g^{-1} x)} \dd x  = \int_{B_{\delta}(g)} \varpi(kx) \overline{\phi_{\mu}(g^{-1} x)} \dd x$$ by changing variables and using the left-${\rm SO}_n(\Bbb{R})$-invariance of the spherical function $\phi_{\mu}$ defined in \eqref{spher-func}. Integrating both sides over $g^{-1} {\rm SO}_n(\Bbb{R}) g$ and using the mean value property of $\phi_{\mu}$, we obtain $$\int_{B_{\delta}(g)} \varpi(x) \overline{\phi_{\mu}(g^{-1} x)} \dd x  =  \varpi(g) \int_{B_{\delta}(g)} |\phi_{\mu}(g^{-1} x)|^2 \dd x.$$ By the Cauchy-Schwarz inequality we obtain 
\begin{displaymath}
\begin{split}
   | \varpi(g) |^2 &\leq  \Big( \int_{B_{\delta}(g)} |\phi_{\mu}(g^{-1} x)|^2 \dd x \Big)^{-1}  \int_{B_{\delta}(g)} |\varpi(x) |^2 \dd x \\
   &=\Big( \int_{B_{\delta}(I_n)} |\phi_{\mu}(  x)|^2 \dd x \Big)^{-1}  \int_{B_{\delta}(g)} |\varpi(x) |^2 \dd x. 
   \end{split}
   \end{displaymath}
    For fixed $0 < \delta < 1$ we have from the uniform asymptotic behavior of the spherical function \cite[(5.5)]{Va} that $$|\phi_{\mu}(x) |^2  \gg \| \mu \|^{-n(n-1)}, \quad x\in B_{\delta}(I_n)$$ if $\| \mu \|$ is larger than some absolute constant. For bounded $\mu$, this remains true after possibly reducing $\delta$ by a compactness argument since $\phi_{\pi}(I_n) = 1$, so that $$| \varpi(g) |^2 \ll \| \mu \|^K  \int_{B_{\delta}(g)} |\varpi(x) |^2 \dd x.$$ After possibly reducing $\delta$ further, we may assume that the  union of the $B_{\delta}(g)$ for $\bar{g} \in {\rm SL}_n(\Bbb{Z}/q\Bbb{Z})$ is disjoint and   a compact portion $\Omega$ of $\Gamma(q) \backslash {\rm SL}_n(\Bbb{R})$. We conclude 
\begin{equation}\label{eq:loc_bound_disc}
	\sum_{\bar{g} \in  {\rm SL}_n(\Bbb{Z}/q\Bbb{Z})} | \varpi(g) |^2 \ll   \| \mu \|^K \int_{\Omega} |\varpi(x) |^2 \dd x \leq \| \mu \|^K
\end{equation}
if $\varpi$ is square integrable (i.e. contributes to the discrete spectrum of $X_q$). 
 
Turning towards the case of non-discrete $\varpi$ we assume (without loss of generality) that $\varpi(x) = K_q^{-1}\cdot E(x,\phi,\lambda)$, for a standard parabolic subgroup $P$, an irreducible automorphic representation $\sigma\subset L^2_{\rm disc}({ Z}^+_M(\Bbb{Q})\backslash M(\Bbb{A})$ and  $\phi\in \mathcal{H}_P(\sigma)^{K(q)}$ with $\Vert\phi\Vert=1$. Here $K_q$ is a constant with $K_q\asymp \mathcal{V}_q^{1/2}$ coming from the different normalization of the inner products of $L^2({\rm GL}_n(\Bbb{Q})\backslash {\rm GL}_n(\Bbb{A})^1)$ and $X_q$; see \eqref{strongapprox}.

We return to proving the desired pointwise bound. Our starting point is the first inequality in \eqref{eq:loc_bound_disc}, which remains true in this case. Indeed we have
\begin{equation}
	\sum_{\bar{g} \in  {\rm SL}_n(\Bbb{Z}/q\Bbb{Z})} | \varpi(g) |^2 \ll \| \mu \|^K\int_{\Omega} |\varpi(x) |^2 \dd x \ll \frac{\| \mu \|^K}{K_q^2}\int_{\Omega} |E(x_{\infty},\phi,\lambda) |^2 \dd x_{\infty}, \nonumber 
\end{equation}
where we indicate that the remaining integral sees only the archimedean place. Without loss of generality we can assume that $\phi$ transforms under $K_1(q)$ with respect to some tuple of character $\bm{\zeta}$ as in Section~\ref{sec:adelising}. In particular $|E(x_{\infty},\phi,\lambda)|$ is invariant under the open compact subgroup $K(q)\subset K'(q)\subset K_1(q)$ where $K'(q)=\text{diag}(\widehat{\Bbb{Z}}^{\times},1,\ldots,1)K(q)$. We obtain
\begin{equation}
	\frac{1}{K_q^2}\int_{\Omega} |E(x_{\infty},\phi,\lambda) |^2 \dd x_{\infty} = \text{Vol}(K'(q))^{-1}K_q^{-2}\int_{\Omega\times K'(q)} |E(x,\phi,\lambda) |^2 \dd x_{\infty}.
\end{equation}
Note that $$\text{Vol}(K'(q))^{-1}K_q^{-2} \asymp \frac{[K_{\rm fin}\colon K'(q)]}{\mathcal{V}_q}\asymp 1.$$ Put $\Omega_{\Bbb{A}} = B_{\delta}(I_n)\times K_{\rm fin}$, recall $\Omega=\bigsqcup_{\overline{g}\in{\rm SL}_n(\Bbb{Z}/q\Bbb{Z})}gB_{\delta}(I_n)$ and observe $K_{\rm fin}= \bigsqcup_{\overline{g}\in{\rm SL}_n(\Bbb{Z}/q\Bbb{Z})}g^{-1}K'(q)$. By some changes of variables we obtain
\begin{equation}
	\sum_{\bar{g} \in  {\rm SL}_n(\Bbb{Z}/q\Bbb{Z})} | \varpi(g) |^2 \ll \frac{\| \mu \|^K}{K_q^2}\int_{\Omega} |E(x_{\infty},\phi,\lambda) |^2 \dd x_{\infty} \ll \| \mu \|^K\int_{\Omega_{\Bbb{A}}} |E(x,\phi,\lambda) |^2 \dd x. \nonumber 
\end{equation}
The point of this maneuvre is that we have removed any $q$-dependence from the domain of integration $\Omega_{\Bbb{A}}$.  Instead of extending the integral to all ${\rm GL}_n(\Bbb{Q})\backslash {\rm GL}_n(\Bbb{A})^1$, which is not possible since $E(\cdot,\phi,\lambda)$ is not square integrable, we apply the truncation operator $\Lambda^T$ first. Indeed, without loss of generality we can assume that $\Omega_{\Bbb{A}}$ is contained in a Siegel set $\mathfrak{S}$ so that after making $\delta$ smaller or $T$ bigger \cite[Lemma~2.6]{La} implies 
\begin{equation}
	\int_{\Omega_{\Bbb{A}}} |E(x,\phi,\lambda) |^2 \dd x = \int_{\Omega_{\Bbb{A}}} |\Lambda^T E(x,\phi,\lambda) |^2 \dd x.\nonumber
\end{equation}
At this point we can extend the integral and apply Hypothesis~\ref{hyp_A} to get
\begin{equation}
	\sum_{\bar{g} \in  {\rm SL}_n(\Bbb{Z}/q\Bbb{Z})} | \varpi(g) |^2 \ll \| \mu \|^K\int_{{\rm GL}_n(\Bbb{Q})\backslash {\rm GL}_n(\Bbb{A})^1} |\Lambda^TE(x,\phi,\lambda) |^2 \dd x \ll q^{\epsilon}\| \mu \|^K. \nonumber 
\end{equation}
as desired.

\subsection{Optimal lifting}
 
We are now ready to prove Theorem \ref{thm3}, following the strategy of \cite{Sa2}.  Let $T \geq 1$ be a parameter and let $f : {\rm SL}_n(\Bbb{R}) \rightarrow \Bbb{C} $ be as in \eqref{deff}. For $z, w\in \Gamma(q) \backslash {\rm SL}_n(\Bbb{R})$ and $\bar{g} \in {\rm SL}_n(\Bbb{Z}/q\Bbb{Z})$ define
$$F(z, w; \bar{g})  = \sum_{\gamma \in \Gamma(q)} f(w^{-1} g\gamma z)$$
and 
$$W(z) = \sum_{\bar{g} \in  {\rm SL}_n(\Bbb{Z}/q\Bbb{Z})} \int_{\Gamma(1)\backslash {\rm SL}_n(\Bbb{R})} \Big|F(z, w; \bar{g}) - \frac{\tilde{f}(\rho)}{\text{vol}(\Gamma(q) \backslash \mathcal{H})}\Big|^2 \dd\mu(w)$$
   where $\dd\mu$ is the usual Haar measure on ${\rm SL}_n(\Bbb{R})$. We will estimate $W(I_n)$ from above and below. On the one hand, let $N(q, T)$ denote the number of $\bar{g} \in {\rm SL}_n(\Bbb{Z}/q\Bbb{Z})$ \emph{without} a lift of norm $\leq 4nT$. Then for $w$ in a small ball about $I_n$ we have $F(I_n, w; \bar{g}) = 0$, and hence by \eqref{sph-rho} and \eqref{vqnq} we conclude 
   \begin{equation}\label{lowerW}
   W(I_n) \gg N(q, T) \cdot \frac{T^{2n(n-1)}}{\mathcal{V}^2_q} .
   \end{equation}
   
  On the other hand, 
  we apply the pretrace formula \eqref{pretrace} to $F(z, w; \bar{g})$, subtract the contribution of the trivial representation (constant function), open the square and combine the $\bar{g}$-sum with the integral to obtain
   $$W(z) = \int^{\ast}_{\Gamma(q)}    |\tilde{f}(\mu_{\varpi}) |^2 |\varpi(z)|^2 \dd\varpi$$
   where the $\ast$ indicates that the trivial representation is omitted. We let $P = (qT)^{\varepsilon}$ and distinguish the contribution $\| \mu_{\varpi} \| \leq P $ and $\| \mu_{\varpi} \| > P$. Correspondingly we write $W(z) = W(z)_{\leq P} + W(z)_{> P}$. Using \eqref{sph} with sufficiently large $B$ (in terms of $\varepsilon$) and the simple bound
   $$ \underset{\| \mu \| \leq R}{\int_{\Gamma(q)} }  |\varpi(I_n)|^2 \dd\varpi \ll R^{n(n+1)/2 - 1}$$
  for $R \geq 1$ (which is another application of the pretrace formula -- in fact any polynomial bound would suffice), we obtain
  \begin{equation}\label{largeP}
  W(I_n)_{> P} \ll T^{n(n-1) +\varepsilon}.
  \end{equation}
  Since $\Gamma(q)$ is normal in $\Gamma(1)$, we have $W(I_n) = W(g)$ for $g \in \Gamma(1) = {\rm SL}_n(\Bbb{Z})$, and so
   $$W(I_n)_{\leq P} =\frac{1}{\mathcal{V}_q} \sum_{\bar{g} \in {\rm SL}_n(\Bbb{Z}/q\Bbb{Z})} W(g)_{\leq P}  = \frac{1}{\mathcal{V}_q} \sum_{\bar{g} \in {\rm SL}_n(\Bbb{Z}/q\Bbb{Z})} \underset{\| \mu_{\varpi} \| \leq P}{ \int^{\ast }_{\Gamma(q)} }  |\tilde{f}(\mu_{\varpi}) |^2 |\varpi(g)|^2 \dd\varpi.  $$   
 By Lemma \ref{local} and  \eqref{sph} we conclude
 $$W(I_n)_{\leq P} \ll (Tq)^{\varepsilon} \frac{T^{n(n-1)  }}{\mathcal{V}_q}\underset{\| \mu_{\varpi} \| \leq P}{ \int^{\ast }_{\Gamma(q)} }   T^{2n \| \Re \mu_{\varpi} \|} \dd \varpi.$$
 In order to apply Theorem \ref{eisen}, we choose $T_0 = (P^{-K} q^{n+1})^{1/n} \gg P^{-K/n}\mathcal{V}_q^{1/n(n-1)} $ and write 
 $$T^{2n \| \Re \mu_{\varpi} \|} \leq T_0 ^{2n \| \Re \mu_{\varpi} \|} \Big(\frac{T}{T_0}\Big)^{2n  \mu^{\ast}}$$
 using the spectral gap \eqref{gap}. 
  Thus by Theorem \ref{eisen} we get
 $$W(I_n)_{\leq P} \ll (Tq)^{\varepsilon} \frac{T^{n(n-1)}}{\mathcal{V}_q} \mathcal{V}_q \Big(\frac{T}{T_0}\Big)^{2n \mu^{\ast}}.$$
 This dominates \eqref{largeP}. 
 Combining this with  \eqref{lowerW}, we obtain
 $$\frac{N(q, T)}{\mathcal{V}_q}   \leq (Tq)^{\varepsilon} \frac{\mathcal{V}_q}{T^{n(n-1)}} \Big(\frac{T}{T_0}\Big)^{2n \mu^{\ast}} \ll (Tq)^{\varepsilon} \Big(\frac{\mathcal{V}_q}{T^{n(n-1)}}  \Big)^{1/(n-1)}  \ll \mathcal{V}_q^{-\delta}$$
 if   $T \gg q^{1+ 1/n + \varepsilon}$ and $\delta = \delta(\varepsilon) > 0$ is sufficiently  small.

 \subsection{A counting problem}
 
The argument for the proof of Theorem \ref{thm2} is very similar to the preceding proof. With the same choice of $f$ as in \eqref{deff} we have
$$\sum_{\substack{\gamma \in \Gamma(q)\\ \| \gamma \| \leq T}} 1 \leq \sum_{\gamma \in \Gamma(q)} f(\gamma) = \frac{1}{\mathcal{V}_q} \sum_{\bar{g} \in \text{SL}_n(\Bbb{Z}/q\Bbb{Z})} \sum_{\gamma \in \Gamma(q)} f(g^{-1} \gamma g)$$
where as before  we used that $\Gamma(q)$ is normal in ${\rm SL}_n(\Bbb{Z})$. Applying the pretrace formula \eqref{pretrace} and using also Lemma \ref{local} and \eqref{sph}, we obtain
$$\sum_{\substack{\gamma \in \Gamma(q)\\ \| \gamma \| \leq T}} 1 \leq \frac{1}{\mathcal{V}_q} \sum_{\bar{g} \in \text{SL}_n(\Bbb{Z}/q\Bbb{Z})} \int_{\Gamma(q)}  |\varpi(g)|^2\tilde{f}(\mu_{\varpi}) \dd \varpi \ll \frac{(Tq)^{\varepsilon}}{\mathcal{V}_q}   \int_{\Gamma(q)}  T^{n(\frac{n-1}{2} +  \| \Re \mu_{\varpi} \|)}\dd \varpi.$$
Again we choose $T_0 = (P^{-K} q^{n+1})^{2/n} \gg P^{-2K/n}\mathcal{V}_q^{2/n(n-1)} $ and apply Theorem \ref{eisen} in combination with \eqref{gap}. Then the above is
$$\ll(Tq)^{\varepsilon} \frac{T^{n(n-1)/2  }}{\mathcal{V}_q} \mathcal{V}_q\Big(1 + \frac{T}{T_0}\Big)^{n(n-1)/2} = (Tq)^{\varepsilon} \Big(T^{n(n-1)/2} + \frac{T^{n(n-1)}}{\mathcal{V}_q}\Big)$$
as claimed. \\

 \textbf{Acknowledgement:} The authors would like to thank S.\ Jana and A.\ Kamber for useful discussions on their work \cite{JK}. We thank the referees for a careful reading of the manuscript.

\end{document}